\newcounter{assum}
\newtheorem{theorem}{Theorem}[section]
\newtheorem{lemma}[theorem]{Lemma}
\newtheorem{corollary}[theorem]{Corollary}
\theoremstyle{definition}
\newtheorem{remark}[theorem]{Remark}
\newtheorem{definition}[theorem]{Definition}
\renewcommand{\appendix}{\par
	\setcounter{section}{0}%
	\setcounter{subsection}{0}%
	\setcounter{subsubsection}{0}%
	\gdef\thesection{\@Alph\c@section}%
	\gdef\thesubsection{\@Alph\c@section.\@arabic\c@subsection}%
	\gdef\theHsection{\@Alph\c@section.}%
	\gdef\theHsubsection{\@Alph\c@section.\@arabic\c@subsection}%
	\csname appendixmore\endcsname
}
\numberwithin{equation}{section}
\begin{document}

\title{\bf\Large Parabolic Extrapolation and Its Applications
to Characterizing Parabolic BMO Spaces via Parabolic Fractional Commutators
\footnotetext{\hspace{-0.35cm} 2020
{\it Mathematics Subject Classification}.
Primary 47B47; Secondary 42B25, 42B35, 47A30, 46E35.\endgraf
{\it Key words and phrases.}
parabolic extrapolation, PBMO, parabolic fractional integral
operator, parabolic commutator.
\endgraf
This project is partially supported by the National
Key Research and Development Program of China
(Grant No. 2020YFA0712900), the Spanish Ministry of
Science and Innovation through the Ram\'{o}n y Cajal 2021
(RYC2021-032600-I),
the National Natural Science Foundation of China
(Grant Nos. 12431006 and 12371093),
and the Fundamental Research Funds
for the Central Universities (Grant No. 2233300008).}}
\date{}
\author{Mingming Cao, Weiyi Kong, Dachun
Yang, Wen Yuan\footnote{Corresponding author,
E-mail: \texttt{wenyuan@bnu.edu.cn}/{\color{red}
\today}/Final version.} \ and Chenfeng Zhu}

\maketitle

\vspace{-0.7cm}

\begin{center}
\begin{minipage}{13cm}
{\small {\bf Abstract}\quad In this article, we establish the
parabolic version of the celebrated Rubio de Francia
extrapolation theorem. As applications, we obtain new
characterizations of parabolic BMO-type spaces in terms of
various commutators of parabolic fractional operators with time
lag. The key tools to achieve these include to establish
the appropriate form in the parabolic setting of the
parabolic Rubio de Francia iteration algorithm, the Cauchy
integral trick, and a modified Fourier series expansion argument
adapted to the parabolic geometry. The novelty of these results
lies in the fact that, for the first time, we not only introduce
a new class of commutators associated with parabolic fractional
integral operators with time lag, but also utilize them to
provide a characterization of the parabolic BMO-type space in the high-dimensional case.
}
\end{minipage}
\end{center}

\vspace{0.2cm}

\tableofcontents

\vspace{0.2cm}

\section{Introduction}

The classical Muckenhoupt weights, introduced in
\cite{m(tams-1972)} in the early 1970s, play a crucial role in
harmonic analysis and partial differential equations and have
been thoroughly developed in the past few decades; see, for
example, \cite{Cruz-book, Rubio-book}. The
parabolic Muckenhoupt weights with time lag were originally
introduced by Kinnunen and Saari
\cite{ks(na-2016), ks(apde-2016)}. The motivation for studying
the theory of parabolic Muckenhoupt weights with time lag stems
from the following two aspects. On the one hand, it has a very
profound connection with the regularity theory of solutions
of the doubly nonlinear parabolic equation
\begin{align}\label{20250116.2142}
\frac{\partial}{\partial t}\left(|u|^{p-2}u\right)-\mathrm{div}
\left(\left|\nabla u\right|^{p-2}\nabla u\right)=0.
\end{align}
Here, and thereafter, we \emph{always fix} $p\in(1,\infty)$.
On the other hand, the parabolic Muckenhoupt condition proves
an appropriate substitute in the higher-dimensional case of
the following one-sided Muckenhoupt
condition in the one-dimensional case
\begin{align*}
[\omega]_{A_q^+(\mathbb{R})}:=
\sup_{\genfrac{}{}{0pt}{}{x\in\mathbb{R}}{h\in(0,\infty)}}\frac{1}{h}
\int_{x-h}^x\omega(y)\,dy\left\{\frac{1}{h}
\int_x^{x+h}\left[\omega(y)\right]
^{\frac{1}{1-q}}\,dy\right\}^{q-1}<\infty
\end{align*}
introduced by Sawyer \cite{s(tams-1986)} in connection with
ergodic theory, where $q\in(1,\infty)$ and $\omega$ is a
locally integrable function on $\mathbb{R}$ which is positive
almost everywhere. We refer to \cite{kk(ma-2007), m(cpam-1961),
m(cpam-1964), t(cpam-1968)} for more investigations into the
equation \eqref{20250116.2142}, to \cite{a(tams-1988),
fg(pams-1985), kmy(ma-2023), kyyz-2024, my(mz-2024),
s(rmi-2016), s(ampa-2018)} for more studies on parabolic
function spaces related to \eqref{20250116.2142}, and to
\cite{afm(pams-1997), cno(sm-1993), fmo(tams-2011),
flps(na-2014), mpd(cjm-1993), md(jlms-1994), o(pams-2005)} for
further research about the one-sided Muckenhoupt weights.
The concept of parabolic Muckenhoupt weights with time lag is
built upon the following parabolic rectangles,
which are derived from the intrinsic geometry of
\eqref{20250116.2142} and the
parabolic Harnack's inequality associated with
\eqref{20250116.2142}. For any $x\in\mathbb{R}^n$ and
$L\in(0,\infty)$, let $Q(x,L)$ be the \emph{cube} in
$\mathbb{R}^n$ centered at $x$ with edge length $2L$
and all its edges parallel to the coordinate axes.

\begin{definition}\label{parabolic rectangles}
Let $(x,t)\in\mathbb{R}^{n+1}$ and $L\in(0,\infty)$. A
\emph{parabolic rectangle} $R(x,t,L)$ centered at
$(x,t)$ with edge length $L$ is defined by setting
$R:=R(x,t,L):=Q(x,L)\times(t-L^{p},t+L^{p}).$
The \emph{$\gamma$-upper part}
$R^+(\gamma)$ and the \emph{$\gamma$-lower part}
$R^-(\gamma)$ of $R$ are defined, respectively, by setting
\begin{align*}
R^+(\gamma):=Q(x,L)\times(t+\gamma L^p,t+L^{p})
\end{align*}
and
\begin{align*}
R^-(\gamma):=Q(x,L)\times(t-L^{p},t-\gamma L^p),
\end{align*}
where $\gamma\in[0,1)$ is called the \emph{time lag}.
\end{definition}

We simply denote $R(x,t,L)$ by $R$ and let
$\mathcal{R}_p^{n+1}$ be the set of all parabolic rectangles
in $\mathbb{R}^{n+1}$. In what follows, if there exists no
confusion, then we \emph{always omit} the differential
$dx\,dt$ in all integral representations to simplify the
presentation and we \emph{always suppress} the variables
$(x,t)$ in the notation and, for instance, for any
$A\subset\mathbb{R}^{n+1}$, any function $f$ on
$\mathbb{R}^{n+1}$, and any $\lambda\in\mathbb{R}$, we
simply write $A\cap\{f>\lambda\}:=\{(x,t)\in A:\
f(x,t)>\lambda\}$. Let $L_{\mathrm{loc}}^1$ denote the set of all
locally integrable functions on $\mathbb{R}^{n+1}$.
Recall that a locally integrable function on $\mathbb{R}^{n+1}$ which is positive
almost everywhere is called a \emph{weight}. For any $f\in
L_{\mathrm{loc}}^1$ and any measurable set
$A\subset\mathbb{R}^{n+1}$ with $\vert A\vert\in(0,\infty)$, let
\begin{align*}
f_A:=\fint_Af:=\frac{1}{\left\vert A\right\vert}\int_Af.
\end{align*}
We then have the following parabolic Muckenhoupt
two-weight classes with time lag from \cite[p.\,3]{cm24}
and \cite[Definition 2.1]{kyyz-2024-2}.

\begin{definition}\label{parabolic weight}
Let $\gamma\in[0,1)$ and
$1\leq r\leq q\leq\infty$. The \emph{parabolic
Muckenhoupt two-weight class $TA_{r,q}^+(\gamma)$ with time
lag} is defined to be the set of all pairs $(u,v)$ of weights
on $\mathbb{R}^{n+1}$ such that
\begin{align*}
[u,v]_{TA_{r,q}^+(\gamma)}:=\sup_{R\in\mathcal{R}_p^{n+1}}
\left[\fint_{R^-(\gamma)}u^q\right]^{\frac{1}{q}}
\left[\fint_{R^+(\gamma)}v^{-r'}\right]^{\frac{1}{r'}}
<\infty
\end{align*}
with usual modifications made when $r=1$ or $q=\infty$.
If this inequality holds with the time axis reversed,
then $(u,v)$ is said to belong to $TA_{r,q}^-(\gamma)$ which is also called the
\emph{parabolic Muckenhoupt two-weight class with time lag}.
\end{definition}

\begin{remark}\label{parabolic weight remark}
Let $\gamma\in[0,1)$ and $1\leq r\leq q<\infty$. We denote by
$A_{r,q}^+(\gamma)$ the set of all weights $\omega$ such
that $(\omega,\omega)\in TA_{r,q}^+(\gamma)$ and let
$[\omega]_{A_{r,q}^+(\gamma)}:=[\omega,\omega]
_{TA_{r,q}^+(\gamma)}$, and denote by $A_q^+(\gamma)$ the set of all weights $\omega$ such
that $\omega^\frac{1}{q}\in A_{q,q}^+(\gamma)$ and let
$[\omega]_{A_q^+(\gamma)}:=[\omega^\frac{1}{q}]
_{A_{q,q}^+(\gamma)}^q$. The weight classes
$A_{r,q}^-(\gamma)$ and $A_q^-(\gamma)$ are defined
similarly.
\end{remark}

In the past few years, the parabolic Muckenhoupt weights with
time lag have achieved significant advancements and
attracted a lot of attention; see, for example, \cite{cm24,
km(am-2024), 2310.00370, kmyz(pa-2023), kyyz-2024-2,
mhy(fm-2023)}.

One of the most powerful and valuable tools in the theory
of weighted norm inequalities is the famous extrapolation theorem
of Rubio de Francia \cite{r(ajm-1984)}. For example, it has
been applied to the proof of the well-known $A_2$ conjecture
(see \cite{dgpp(pm-2005), h(am-2012), h(cm-2014), l(jam-2013)}),
and it can also be used to deduce the
vector-valued weighted norm inequalities from the
scalar-valued ones. We refer to \cite{cmm(am-2022),
co(mn-2024), cm(ma-2018), Cruz-book, hms(ajm-1988),
lmo(am-2020)} for developments and various
generalizations of extrapolation and to \cite{clm(jam-2018),
lr(mia-2007), mr(prse-2000)} for the one-sided case.

Recall that, for any $q\in(0,\infty)$ and any weight $\omega$,
the \emph{weighted Lebesgue space $L^q(\omega)$}
is defined to be the set of all measurable functions $f$
on $\mathbb{R}^{n+1}$ such that
\begin{align*}
\|f\|_{L^q(\omega)}:=\left(\int_{\mathbb{R}^{n+1}}
|f|^q\omega\right)^\frac{1}{q}<\infty
\end{align*}
and the \emph{weighted weak Lebesgue space
$L^{q,\infty}(\omega)$} is defined to
be the set of all measurable functions $f$ on
$\mathbb{R}^{n+1}$ such that
\begin{align*}
\|f\|_{L^{q,\infty}(\omega)}:=
\sup_{\lambda\in(0,\infty)}\lambda
\left[\omega(\{|f|>\lambda\})\right]^\frac{1}{q}<\infty.
\end{align*}
In particular, when $\omega\equiv 1$, $L^q(\omega)$ reduces to
the \emph{Lebesgue space} $L^q$ and $L^{q,\infty}(\omega)$
to the \emph{weak Lebesgue space} $L^{q,\infty}$. Also recall that the \emph{Lebesgue space
$L^\infty$} is defined to be the set of all measurable functions
$f$ on $\mathbb{R}^{n+1}$ such that
$\|f\|_{L^\infty}:=\mathrm{ess\,sup}_{(x,t)
\in\mathbb{R}^{n+1}}|f(x,t)|<\infty$.

In this article, we first establish the following parabolic version of Rubio
de Francia extrapolation theorem (see also Corollaries
\ref{parabolic extrapotion cor 1}-\ref{parabolic extrapotion cor 3}
for its extensions, Theorem
\ref{parabolic extrapolation infinity} for the extrapolation
at infinity, and Theorem
\ref{parabolic extrapolation for commutator} for the
extrapolation of commutators of linear operators).

\begin{theorem}\label{parabolic extrapolation}
Let $\gamma\in(0,1)$ and $1\leq r_0\leq q_0<\infty$. Let
$\mathcal{F}$ be the set of all pairs $(f,g)$ of
measurable functions on $\mathbb{R}^{n+1}$ such that,
for any $\omega_0\in A_{r_0,q_0}^+(\gamma)$,
\begin{align}\label{20241204.1150}
\|f\|_{L^{q_0}(\omega_0^{q_0})}\leq C_0
\|g\|_{L^{r_0}(\omega_0^{r_0})}
\end{align}
holds provided that the left-hand side of
\eqref{20241204.1150} is finite, where the positive constant
$C_0$ in \eqref{20241204.1150} depends only on $\mathcal{F}$,
$n$, $p$, $\gamma$, $r_0$, $q_0$, and
$[\omega_0]_{A_{r_0,q_0}^+(\gamma)}$. Then, for any
$1<r\leq q<\infty$ satisfying
\begin{align}\label{1113}
\frac{1}{r}-\frac{1}{q}=\frac{1}{r_0}-\frac{1}{q_0}
\end{align}
and for any $\omega\in A_{r,q}^+(\gamma)$, there
exists a positive constant $C$, depending only on
$\mathcal{F}$, $n$, $p$, $\gamma$, $r_0$, $q_0$, $r$, $q$, and
$[\omega]_{A_{r,q}^+(\gamma)}$, such that, for any
$(f,g)\in\mathcal{F}$,
\begin{align}\label{20241204.1152}
\|f\|_{L^q(\omega^q)}\leq C
\|g\|_{L^r(\omega^r)}
\end{align}
holds provided that the left-hand side of
\eqref{20241204.1152} is finite.
\end{theorem}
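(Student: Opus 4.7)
The plan is to adapt the classical Rubio de Francia extrapolation argument to the parabolic, time-lagged setting. The two essential ingredients are (i) the $L^s(\nu)$-boundedness of a one-sided parabolic maximal operator $\mathcal{M}_\gamma^+$ with time lag whenever $\nu\in A_s^+(\gamma)$, which is available in the cited parabolic Muckenhoupt literature, and (ii) a parabolic Rubio de Francia iteration algorithm built from $\mathcal{M}_\gamma^+$ (and its time-reversed analogue $\mathcal{M}_\gamma^-$) that produces $A_1^\pm(\gamma)$ weights with controlled constants.

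For the iteration, given a nonnegative $h\in L^s(\nu)$ with $\nu\in A_s^+(\gamma)$ and $N:=\|\mathcal{M}_\gamma^+\|_{L^s(\nu)\to L^s(\nu)}$, I would set
$$\mathcal{R}h:=\sum_{k=0}^{\infty}\frac{(\mathcal{M}_\gamma^+)^k h}{(2N)^k}.$$
Then $h\le\mathcal{R}h$ pointwise, $\|\mathcal{R}h\|_{L^s(\nu)}\le 2\|h\|_{L^s(\nu)}$, and $\mathcal{M}_\gamma^+(\mathcal{R}h)\le 2N\,\mathcal{R}h$, so $\mathcal{R}h\in A_1^+(\gamma)$ with constant at most $2N$. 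A parallel construction $\mathcal{R}^-$ built from $\mathcal{M}_\gamma^-$ delivers $A_1^-(\gamma)$ weights.

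With these two tools in hand, the extrapolation argument follows the off-diagonal template (compare \cite{Cruz-book}). Fix $\omega\in A_{r,q}^+(\gamma)$ and use the parabolic analogue of the identification $\omega\in A_{r,q}^+\Leftrightarrow\omega^q\in A_{1+q/r'}^+$ to locate $\omega^q$ in a parabolic Muckenhoupt class. Dualise the outer $L^q(\omega^q)$-norm against a nonnegative test function $h$ with $\|h\|_{L^{(q/q_0)'}(\omega^q)}\le 1$ and apply $\mathcal{R}$ to $h$, producing a weight $H\ge h$ of comparable norm. Combining $H$ multiplicatively with an $\mathcal{R}^-$-iterate acting on the $\omega^{-r'}$ side, with the powers dictated by the scaling $1/r-1/q=1/r_0-1/q_0$, and invoking a parabolic Jones-type factorisation, yields $\omega_0\in A_{r_0,q_0}^+(\gamma)$ whose constant is controlled by $[\omega]_{A_{r,q}^+(\gamma)}$. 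Plugging $\omega_0$ into the hypothesis \eqref{20241204.1150} and using H\"older's inequality to transfer $\|g\|_{L^{r_0}(\omega_0^{r_0})}$ to $\|g\|_{L^r(\omega^r)}$ closes the estimate.

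The hard part will be producing $\omega_0\in A_{r_0,q_0}^+(\gamma)$ with the correct class and constant. The asymmetry between $R^-(\gamma)$ and $R^+(\gamma)$ in Definition~\ref{parabolic weight} forces the $A_1^+$-factor from $\mathcal{R}$ and the $A_1^-$-factor from $\mathcal{R}^-$ to live on opposite halves of each parabolic rectangle, and the parabolic Jones factorisation must be invoked in precisely the time-lagged form matching this geometry. Bookkeeping the H\"older exponents so that the off-diagonal relation $1/r-1/q=1/r_0-1/q_0$ surfaces naturally, and verifying that the boundedness ranges of $\mathcal{M}_\gamma^{\pm}$ on the weighted Lebesgue spaces used in the iteration are compatible with the exponents arising in the duality step, is the delicate point. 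Finally, since \eqref{20241204.1150} is invoked only when its left-hand side is finite, the argument must first be carried out for truncated pairs $(f,g)$ that render the left-hand side of \eqref{20241204.1152} a priori finite, and then passed to the limit by monotone convergence.
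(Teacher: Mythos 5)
Your proposal follows essentially the same route as the paper's proof, which (modeled on the off-diagonal extrapolation template of Cruz-Uribe--Martell--P\'erez) builds exactly the two parabolic Rubio de Francia iteration algorithms you describe---one from $M^{\gamma+}$ applied to a function dominating both $|f|$ and $|g|^{r/q}\omega^{r/q-1}$, one from $M^{\gamma-}$ applied to the dual function---and then verifies directly by H\"older's inequality (rather than citing a Jones-type factorization theorem) that the combined weight $W=(H_1^{s-q_0}H_2\omega^q)^{1/q_0}$ lies in $A_{r_0,q_0}^+(\gamma)$, handling the finiteness issue by building $|f|/\|f\|_{L^q(\omega^q)}$ into the first iterand instead of truncating. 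One correction to your labels: the condition $M^{\gamma+}w\lesssim w$ controls the average of $w$ over $R^+(\gamma)$ by its essential infimum over $R^-(\gamma)$, so iterating $M^{\gamma+}$ produces an $A_1^-(\gamma)$ weight and iterating $M^{\gamma-}$ an $A_1^+(\gamma)$ weight, the opposite of what you wrote; this is precisely the time-lag asymmetry you flag as the delicate point, and with the labels corrected your outline matches the paper's argument.
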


\begin{remark}
Theorem \ref{parabolic extrapolation} with $r_0=q_0$ is the
parabolic version of the classical Rubio de Francia
extrapolation theorem, while Theorem \ref{parabolic extrapolation}
with $r_0<q_0$ is the parabolic version
of the off-diagonal extrapolation theorem due to Harboure,
Mac\'ias, and Segovia \cite{hms(ajm-1988)}.
\end{remark}

Let $\beta\in\mathbb{R}$. Recall that the \emph{parabolic Campanato space
$\mathrm{PC}^\beta$} is defined to be the space of
all $f\in L_{\mathrm{loc}}^1$ such that
\begin{align*}
\|f\|_{\mathrm{PC}^\beta}:=
\sup_{R\in\mathcal{R}_p^{n+1}}\frac{1}{|R|^\beta}
\fint_R\left|f-f_R\right|<\infty.
\end{align*}
The space $\mathrm{PC}^0$ is precisely the space of functions
with bounded mean oscillation defined
via parabolic rectangles, which is usually denoted by PBMO.
Also recall that the \emph{parabolic
distance} $d_p$ on $\mathbb{R}^{n+1}\times\mathbb{R}^{n+1}$ is
defined by setting, for any $(x,t),(y,s)\in\mathbb{R}^{n+1}$,
\begin{align*}
d_p((x,t),(y,s)):=\max\left\{\|x-y\|_\infty,\,
|t-s|^{\frac{1}{p}}\right\}.
\end{align*}
Via this parabolic distance, we now introduce the $k$th order commutators of
parabolic fractional integral operators with time lag as follows.
\begin{definition}
Let $\gamma,\alpha\in[0,1)$, $k\in\mathbb{N}$, and $b\in
L_{\mathrm{loc}}^1$. The \emph{$k$th order commutator
$[b,I_\alpha^{\gamma+}]_k$ of parabolic fractional
integral operators with time lag} is defined by setting, for any
$f\in L_{\mathrm{loc}}^1$ and $(x,t)\in\mathbb{R}^{n+1}$,
\begin{align*}
\left[b,I_\alpha^{\gamma+}\right]_k(f)(x,t):=
\int_{\bigcup_{L\in(0,\infty)}R
(x,t,L)^+(\gamma)}\frac{[b(x,t)-b(y,s)]^kf(y,s)}
{[d_p((x,t),(y,s))]^{(n+p)(1-\alpha)}}\,dy\,ds.
\end{align*}
\end{definition}

The novelty of these commutators lies in their domains and,
moreover, in \cite{kyyz-2024-2}, the weighted boundedness of the corresponding
parabolic fractional integral operators with time lag
was used to characterize $A_{r,q}^+(\gamma)$
weights (see also Lemma \ref{weighted inequality fractional
integral}).

As an application of the parabolic extrapolation theorem, we
obtain the following new characterization
of the parabolic BMO space in terms of $k$th order commutators of
parabolic fractional integral operators with time lag.

\begin{theorem}\label{[I_alpha,b]}
Let $\gamma,\alpha\in(0,1)$ and $b\in L_{\mathrm{loc}}^1$.
Then the following statements are mutually equivalent.
\begin{enumerate}
\item[\rm(i)] $b\in\mathrm{PBMO}$.

\item[\rm(ii)] For any $k\in\mathbb{N}$, $1<r<q<\infty$
satisfying $\frac{1}{r}-\frac{1}{q}=\alpha$, and $\omega\in
A_{r,q}^+(\gamma)$, $[b,I_\alpha^{\gamma+}]_k$ is bounded from
$L^r(\omega^r)$ to $L^q(\omega^q)$.

\item[\rm(iii)] For some $1<r<q<\infty$ satisfying
$\frac{1}{r}-\frac{1}{q}=\alpha$, $[b,I_\alpha^{\gamma+}]$ is
bounded from $L^r$ to $L^q$.
\end{enumerate}
\end{theorem}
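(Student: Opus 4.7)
The implication (ii) $\Rightarrow$ (iii) is immediate upon specialising to $k=1$ and $\omega\equiv 1$; the plan therefore concentrates on the other two implications.

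For (i) $\Rightarrow$ (ii), my strategy is to combine the Cauchy integral trick of Coifman--Rochberg--Weiss, adapted to the parabolic setting, with the commutator extrapolation of Theorem \ref{parabolic extrapolation for commutator}. Assuming $b\in\mathrm{PBMO}$, I would first invoke the parabolic John--Nirenberg inequality to guarantee that the multiplicative weight $e^{zb}$ lies uniformly in $A_{r_0,q_0}^+(\gamma)$ for all $|z|\le\varepsilon$, where $\varepsilon$ depends only on $\|b\|_{\mathrm{PBMO}}$ and on the weight constant. Writing the $k$th commutator as a contour integral,
\begin{align*}
[b, I_\alpha^{\gamma+}]_k(f)
= \frac{k!}{2\pi i}\oint_{|z|=\varepsilon}
\frac{e^{-zb}\,I_\alpha^{\gamma+}(e^{zb}f)}{z^{k+1}}\,dz,
\end{align*}
and inserting the weighted bound for $I_\alpha^{\gamma+}$ from Lemma \ref{weighted inequality fractional integral} applied to the weight $\omega\,e^{\mathrm{Re}(zb)}$ then yields (ii). A feasible alternative is to establish just the case $k=1$ for some particular pair $(r_0,q_0)$ and then iterate using Theorem \ref{parabolic extrapolation for commutator} together with induction on $k$.

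The main obstacle is (iii) $\Rightarrow$ (i), and my plan there follows Janson's method suitably modified for the parabolic geometry. Fix $R = R(x_0,t_0,L)\in\mathcal{R}_p^{n+1}$. I would select a parabolic rectangle $R'$ of edge length comparable to $L$ lying in $R(x,t,cL)^+(\gamma)$ simultaneously for every $(x,t)\in R$, placed so that $d_p((x,t),(y,s))\asymp L$ uniformly for $(x,t)\in R$ and $(y,s)\in R'$; this inclusion uses crucially that $\gamma\in(0,1)$ is strictly positive. On this product region the kernel $K((x,t),(y,s)) := [d_p((x,t),(y,s))]^{-(n+p)(1-\alpha)}$ is smooth and bounded away from $0$ and $\infty$, so a modified Fourier series expansion whose frequencies are adapted to the anisotropic dilation $(L,\dots,L,L^p)$ produces a representation
\begin{align*}
\frac{1}{K((x,t),(y,s))}
= \sum_{\nu\in\mathbb{Z}^{n+1}} c_\nu\,
e^{2\pi i\phi_\nu(x,t)}e^{-2\pi i\phi_\nu(y,s)}
\end{align*}
with rapidly decreasing coefficients. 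Choosing a unimodular $h$ with $h(b-b_{R'})=|b-b_{R'}|$ and testing $[b,I_\alpha^{\gamma+}]$ against $e^{-2\pi i\phi_\nu}\chi_{R'}$ converts the $L^r\to L^q$ hypothesis, together with the off-diagonal relation $\frac{1}{r}-\frac{1}{q}=\alpha$, into the estimate $\fint_R|b-b_{R'}|\le C$, from which $b\in\mathrm{PBMO}$ follows by the triangle inequality.

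The hardest technical step will be the Fourier construction: the frequencies $\phi_\nu$ must respect the anisotropic parabolic scaling and simultaneously yield decay of the coefficients $c_\nu$ fast enough that $\sum_\nu|c_\nu|\,|R'|^{1/r}$ remains under control when paired with the $L^r\to L^q$ bound. A secondary subtlety is the one-sided, time-lag structure of $I_\alpha^{\gamma+}$, which forces $R'$ to be chosen in $R^+(\gamma)$ rather than in $R$ itself and therefore requires a placement argument different from the classical isotropic characterisation of BMO.
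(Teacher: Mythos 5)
Your proposal is correct and follows essentially the same route as the paper: the implication (i) $\Rightarrow$ (ii) is obtained by feeding the weighted bounds of Lemma \ref{weighted inequality fractional integral}(i) into the commutator extrapolation Theorem \ref{parabolic extrapolation for commutator} (which already packages the Cauchy-integral/conjugation-by-$e^{zb}$ trick you describe), and (iii) $\Rightarrow$ (i) is proved exactly by Janson's Fourier-expansion argument with the auxiliary rectangle $R'$ translated into the forward-in-time region $\bigcup_{L}R(x,t,L)^{+}(\gamma)$ so that the one-sided, anisotropically scaled kernel is captured. The only cosmetic difference is that you expand the reciprocal of the kernel itself, whereas the paper expands $(|x|+|t|^{1/p})^{(n+p)(1-\alpha)}$ after rescaling; both implement the same idea.
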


\begin{remark}
\begin{enumerate}
\item[\rm(i)] We prove Theorem \ref{[I_alpha,b]} by following
the chain of reasoning:
$\rm(i)\ \Longrightarrow\ \rm(ii)\Longrightarrow\ \rm(iii)\
\Longrightarrow\ \rm(i).$
It is important to note that, in the process of deriving (iii)
from (i), we make use of the parabolic extrapolation theorem (namely
Theorem \ref{parabolic extrapolation for commutator}) for the
commutator of the parabolic fractional integral operator
$I_\alpha^{\gamma+}$ with time lag [see \eqref{20250304.2134}
for the definition of $I_\alpha^{\gamma+}$]. Thus, in some sense,
the parabolic weights with time lag are necessary to obtain the equivalence
between (i) and (iii) of Theorem \ref{[I_alpha,b]}.

\item[\rm(ii)] To the best of our knowledge, Theorem
\ref{[I_alpha,b]} is the first result to characterize the
parabolic BMO space via the commutator of parabolic fractional
integral operators with time lag. For related one-sided results,
see, for instance, \cite[Corollary 12]{lmr(jmaa-2024)} and
\cite[Theorem 2.9]{lr(prse-2005)}, which show that,
in the one-dimensional case,
if $b$ belongs to $\mathrm{BMO}$, then, for any $k\in\mathbb{N}$,
the $k$th order commutator of the one-sided fractional integral
operator is one-sided weighted bounded. Different from
\cite[Corollary 12]{lmr(jmaa-2024)} and
\cite[Theorem 2.9]{lr(prse-2005)}, which only provide the
sufficient condition for the weighted boundedness
of $k$th order commutators in the one-dimensional case, the
advantages of Theorem \ref{[I_alpha,b]} are not only in the
higher-dimensional case, but also give the sufficient and
necessary condition for the weighted boundedness of $k$th order
commutators.
\end{enumerate}
\end{remark}

We also obtain the following various new characterizations of parabolic Campanato
spaces in terms of various $k$th parabolic fractional
commutators. We refer to Definitions
\ref{parabolic maximal commutator} and
\ref{maximal commutators of fractional integral}, respectively,
for the concepts of
$M_{\alpha,b}^{\gamma+,k}$ and $I_{\alpha,b}^{\gamma+,k}$.

\begin{theorem}\label{commutator theorem 2}
Let $\gamma,\sigma\in(0,1)$, $\rho,\alpha\in[0,1)$,
$\beta\in(0,\frac{1}{n+p})$, and $b\in
L_{\mathrm{loc}}^1$. Then the following
assertions are mutually equivalent.
\begin{enumerate}
\item[\rm(i)] $b\in\mathrm{PC}^\beta$.

\item[\rm(ii)] For any $k\in\mathbb{N}$, $1<r<q<\infty$
satisfying $\frac{1}{r}-\frac{1}{q}=\alpha+\beta k$, and
$\omega\in A_{r,q}^+(\gamma)$, $M_{\alpha,b}^{\gamma+,k}$ is
bounded from $L^r(\omega^r)$ to $L^q(\omega^q)$.

\item[\rm(iii)] For some $1<r<q<\infty$ satisfying
$\frac{1}{r}-\frac{1}{q}=\alpha+\beta$,
$M_{\alpha,b}^{\gamma+,1}$ is bounded from $L^r$ to $L^q$.

\item[\rm(iv)] For any $k\in\mathbb{N}$, $1\leq r<q<\infty$
satisfying $\frac{1}{r}-\frac{1}{q}=\alpha+\beta k$, and
$\omega\in A_{r,q}^+(\rho)$, $M_{\alpha,b}^{\rho+,k}$ is bounded
from $L^r(\omega^r)$ to $L^{q,\infty}(\omega^q)$.

\item[\rm(v)] For some $1\leq r<q<\infty$ satisfying
$\frac{1}{r}-\frac{1}{q}=\alpha+\beta$, $M_{\alpha,b}^{\rho+,1}$
is bounded from $L^r$ to $L^{q,\infty}$.

\item[\rm(vi)] For any $k\in\mathbb{N}$, $r\in(1,\infty)$
satisfying $\frac{1}{r}=\alpha+\beta k$, $(u,v)\in
TA_{r,\infty}^+(\rho)$, and $f\in L^r(\omega^r)$,
\begin{align*}
\left\|M_{\alpha,b}^{\rho+,k}(f)u
\right\|_{L^{\infty}}\lesssim
\|fv\|_{L^r},
\end{align*}
where the implicit positive constant is independent of $f$.

\item[\rm(vii)] $M_{\alpha,b}^{\rho+,1}$
is bounded from $L^r$ to $L^\infty$, where $r\in(1,\infty)$
satisfies $\frac{1}{r}=\alpha+\beta$.

\item[\rm(viii)] For any $k\in\mathbb{N}$, $1<r<q<\infty$
satisfying $\frac{1}{r}-\frac{1}{q}=\sigma+\beta k$, and
$\omega\in A_{r,q}^+(\gamma)$, $[b,I_\sigma^{\gamma+}]_k$ is
bounded from $L^r(\omega^r)$ to $L^q(\omega^q)$.

\item[\rm(ix)] For some $1<r<q<\infty$ satisfying
$\frac{1}{r}-\frac{1}{q}=\sigma+\beta$, $[b,I_\sigma^{\gamma+}]$
is bounded from $L^r$ to $L^q$.

\item[\rm(x)] For any $k\in\mathbb{N}$, $1<r<q<\infty$
satisfying $\frac{1}{r}-\frac{1}{q}=\sigma+\beta k$, and
$\omega\in A_{r,q}^+(\gamma)$, $I_{\sigma,b}^{\gamma+,k}$ is
bounded from $L^r(\omega^r)$ to $L^q(\omega^q)$.

\item[\rm(xi)] For some $1<r<q<\infty$ satisfying
$\frac{1}{r}-\frac{1}{q}=\sigma+\beta$,
$I_{\sigma,b}^{\gamma+,1}$ is bounded from $L^r$ to $L^q$.

\item[\rm(xii)] For any $k\in\mathbb{N}$, $1\leq r<q<\infty$
satisfying $\frac{1}{r}-\frac{1}{q}=\sigma+\beta k$, and
$\omega\in A_{r,q}^+(\gamma)$, $I_{\sigma,b}^{\gamma+,k}$ is
bounded from $L^r(\omega^r)$ to $L^{q,\infty}(\omega^q)$.

\item[\rm(xiii)] For some $1\leq r<q<\infty$ satisfying
$\frac{1}{r}-\frac{1}{q}=\sigma+\beta$,
$I_{\sigma,b}^{\gamma+,1}$ is bounded from $L^r$ to
$L^{q,\infty}$.
\end{enumerate}
All the operator norms in (ii) through (xiii) are equivalent to
$\|b\|_{\mathrm{PC}^\beta}^k$.
\end{theorem}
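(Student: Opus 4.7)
The plan is to establish all thirteen equivalences by arranging them into six cycles through the hub (i), one for each of the three operator families $M_{\alpha,b}^{\gamma+,k}$, $[b,I_\sigma^{\gamma+}]_k$, and $I_{\sigma,b}^{\gamma+,k}$ in its strong and weak-type (respectively $L^\infty$-endpoint) incarnation. Concretely, I would close the cycles (i)$\Rightarrow$(ii)$\Rightarrow$(iii)$\Rightarrow$(i), (i)$\Rightarrow$(iv)$\Rightarrow$(v)$\Rightarrow$(i), (i)$\Rightarrow$(vi)$\Rightarrow$(vii)$\Rightarrow$(i), (i)$\Rightarrow$(viii)$\Rightarrow$(ix)$\Rightarrow$(i), (i)$\Rightarrow$(x)$\Rightarrow$(xi)$\Rightarrow$(i), and (i)$\Rightarrow$(xii)$\Rightarrow$(xiii)$\Rightarrow$(i). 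The arrows (ii)$\Rightarrow$(iii), (iv)$\Rightarrow$(v), (vi)$\Rightarrow$(vii), (viii)$\Rightarrow$(ix), (x)$\Rightarrow$(xi), and (xii)$\Rightarrow$(xiii) are immediate by specializing $\omega\equiv 1$ and $k=1$; the substantive work lies in the forward arrows from (i) and in the return arrows back to (i).

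For the forward direction, the unifying tool is parabolic extrapolation. Assuming $b\in\mathrm{PC}^\beta$, for $M_{\alpha,b}^{\gamma+,k}$ I expand $[b(x,t)-b(y,s)]^{k}$ around $b_R$ on each parabolic rectangle and use the Campanato norm to bound $M_{\alpha,b}^{\gamma+,k}(f)$ pointwise by a finite combination of parabolic fractional maximal operators with shifted index $\alpha+\beta k$ (and lower-order commutators acting on shifted maximal operators), whose weighted boundedness across the off-diagonal range is already recorded in Lemma \ref{weighted inequality fractional integral}; Theorem \ref{parabolic extrapolation} then propagates the base estimate through every admissible pair in (ii), while a weak-type analogue of the same extrapolation yields (iv) and Theorem \ref{parabolic extrapolation infinity} yields (vi) and (vii). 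For $[b,I_\sigma^{\gamma+}]_k$ and $I_{\sigma,b}^{\gamma+,k}$ I invoke Theorem \ref{parabolic extrapolation for commutator}: a single weighted estimate, produced via a parabolic Cauchy integral trick that expands $e^{\eta b}I_\sigma^{\gamma+}(e^{-\eta b}f)$ as a power series in $\eta$ and leans on the boundedness of $I_\sigma^{\gamma+}$ from Lemma \ref{weighted inequality fractional integral}, suffices to feed the commutator extrapolation and deduce (viii), (x), (xii), hence (ix), (xi), (xiii).

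The main obstacle is the reverse direction: recovering $b\in\mathrm{PC}^\beta$ from a single unweighted boundedness assertion for just one $k=1$ commutator. The strategy is to fix an arbitrary parabolic rectangle $R=R(x,t,L)$ and to test the given boundedness inequality against a function $f$ supported in the $\gamma$- (or $\rho$-) upper part of $R$, designed so that the kernel $[d_p(\cdot,\cdot)]^{-(n+p)(1-\sigma)}$ is comparable to $|R|^{\sigma-1}$ on the relevant product region and the commutator output localizes to the opposite part of $R$. For $M_{\alpha,b}^{\rho+,1}$ this reduces essentially to choosing $f=\chi_{\{b>b_R\}\cap R^+(\rho)}$ and reading off the desired control of $\fint_R|b-b_R|$ by $|R|^\beta$. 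For $[b,I_\sigma^{\gamma+}]$ and $I_{\sigma,b}^{\gamma+,1}$ a bare test function is insufficient, and one follows the modified Fourier series expansion alluded to in the abstract: expand the kernel on the anisotropic product $R\times R$ in a basis adapted to the parabolic dilation $(y,s)\mapsto(Ly,L^{p}s)$, combine this with a Cauchy integral reduction of $e^{i\eta[b(x,t)-b(y,s)]}$, and extract the coefficient capturing the linear oscillation of $b$, thereby producing a testing identity strong enough to dominate $\fint_R|b-b_R|$ by a constant multiple of $|R|^\beta$. The critical technical point, and the step I expect to require the most care, is that parabolic dilations are anisotropic, so the Fourier expansion must be taken relative to parabolic boxes and the orthogonality/scaling bookkeeping must reproduce exactly the weight $|R|^\beta$; once this is in place, the six cycles close and tracking the constants through extrapolation and testing gives the asserted equivalence of every operator norm in (ii) through (xiii) with $\|b\|_{\mathrm{PC}^\beta}^k$.
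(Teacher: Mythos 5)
Your overall architecture (six cycles through the hub (i), with the ``for any'' $\Rightarrow$ ``for some'' arrows being trivial specializations) matches the paper's, and your reverse directions are in the right spirit: indicator test functions on $R^{+}(\gamma)$ for the positive operators, and a Janson-type Fourier expansion of the kernel adapted to the parabolic geometry for the signed commutator $[b,I_\sigma^{\gamma+}]$. (One small economy you miss: $I_{\sigma,b}^{\gamma+,1}$ carries $|b(x,t)-b(y,s)|$ inside the integral, so it is a positive operator and a bare test function suffices; the paper routes (xi) and (xiii) back through (iii) and (v) via $M_\alpha^{\gamma+}(f)\lesssim I_\alpha^{\frac{\gamma+1}{2^p}+}(|f|)$, reserving the Fourier argument for (ix) alone.)

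The genuine gap is in your forward directions. For (i) $\Rightarrow$ (viii), (x), (xii) you propose to invoke Theorem \ref{parabolic extrapolation for commutator} via the Cauchy integral trick. That theorem is a $\mathrm{PBMO}$ statement: it needs local exponential integrability of $e^{\delta|b|}$ (John--Nirenberg), and, more fatally, it produces commutator bounds with the \emph{same} off-diagonal gap $\frac1r-\frac1q=\frac1{r_0}-\frac1{q_0}$ as the base operator $T$. Here the target gap is $\sigma+\beta k$, strictly larger than the gap $\sigma$ of $I_\sigma^{\gamma+}$: the extra smoothing $\beta k$ comes precisely from the Campanato regularity of $b$ and cannot be manufactured by the Cauchy integral/extrapolation machinery. (Note also that $\mathrm{PC}^\beta\not\subset\mathrm{PBMO}$ for $\beta>0$, since $\fint_R|b-b_R|\le\|b\|_{\mathrm{PC}^\beta}|R|^\beta$ is unbounded over large $R$.) The correct mechanism, which the paper uses for every forward arrow, is the coincidence $\mathrm{PC}^\beta=\mathrm{PLip}^{(n+p)\beta}$ (Lemma \ref{Campanato and Lip}): this gives the pointwise kernel bound $|b(x,t)-b(y,s)|^k\lesssim\|b\|_{\mathrm{PC}^\beta}^k[d_p((x,t),(y,s))]^{(n+p)\beta k}$, hence the pointwise dominations $M_{\alpha,b}^{\gamma+,k}(f)\lesssim\|b\|_{\mathrm{PC}^\beta}^kM_{\alpha+\beta k}^{\gamma+}(f)$ and $\max\{|[b,I_\sigma^{\gamma+}]_k(f)|,I_{\sigma,b}^{\gamma+,k}(f)\}\lesssim\|b\|_{\mathrm{PC}^\beta}^kI_{\sigma+\beta k}^{\gamma+}(|f|)$, after which Lemmas \ref{weighted inequality uncentered} and \ref{weighted inequality fractional integral} (which are already ``for all admissible $(r,q,\omega)$'' characterizations) finish the job with no extrapolation at all. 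Your ``expand $[b(x,t)-b(y,s)]^k$ around $b_R$'' is the BMO-style decomposition and does not by itself yield these pointwise bounds. Relatedly, your claim that Theorem \ref{parabolic extrapolation infinity} yields (vi) and (vii) is backwards: that theorem \emph{consumes} an $L^\infty$-endpoint estimate to produce $L^{q,\infty}$ bounds; the endpoint estimate in (vi) must instead come from the pointwise domination together with Lemma \ref{weighted inequality uncentered}(iii).
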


\begin{remark}
We show Theorem \ref{[I_alpha,b]} by the following chains:
\begin{align*}
&\rm(i)\ \Longrightarrow\ \rm(ii)\ \Longrightarrow\ \rm(iii)\
\Longrightarrow\ \rm(i),\
\rm(i)\ \Longrightarrow\ \rm(iv)\ \Longrightarrow\ \rm(v)\
\Longrightarrow\ \rm(i),\\
&\rm(i)\ \Longrightarrow\ \rm(vi)\ \Longrightarrow\ \rm(vii)\
\Longrightarrow\ (i),\
\rm(i)\ \Longrightarrow\ \rm(viii)\ \Longrightarrow\ \rm(ix)\
\Longrightarrow\ \rm(i),\\
&\rm(i)\ \Longrightarrow\ \rm(x)\ \Longrightarrow\ \rm(xi)\
\Longrightarrow\ \rm(iii),\ \mathrm{and}\\
&\rm(i)\ \Longrightarrow\ \rm(xii)\ \Longrightarrow\ \rm(xiii)\
\Longrightarrow\ \rm(v).
\end{align*}
\end{remark}

The organization of the remainder of this article is as follows.

In Section \ref{section2}, we first construct the parabolic
Rubio de Francia iteration algorithm to prove
Theorem \ref{parabolic extrapolation}. After that, we present
its several extensions, including the weak type
parabolic extrapolation theorem (see Corollary
\ref{parabolic extrapotion cor 1}), the parabolic
$A_\infty$ extrapolation theorem (see Corollary
\ref{parabolic extrapotion cor 4}), and the vector-valued
version of the parabolic extrapolation theorem (see Corollary
\ref{parabolic extrapotion cor 2}). Then we establish the
parabolic extrapolation theorem at infinity (see Theorem
\ref{parabolic extrapolation infinity}) by borrowing some
ideas from \cite{mr(prse-2000)}. Finally, combining
Theorem \ref{parabolic extrapolation} and the so-called Cauchy
integral trick in \cite{abkp(sm-1993)} adapted to the parabolic
setting, we show the parabolic extrapolation theorem of
commutators (see Theorem
\ref{parabolic extrapolation for commutator}).

In Section \ref{section3}, we give several
characterizations of both the parabolic BMO space and parabolic
Campanato spaces. In Subsection \ref{subsection3.1}, we apply the
coincidence of the parabolic Campanato space and the parabolic
Lipschitz space to prove the mutual equivalences among (i) through (vii) of Theorem
\ref{commutator theorem 2}. Based on
this, in Subsection \ref{subsection3.2}, applying a classical
lemma about commutators of positive quasilinear operators in
\cite{bmr(pams-2000)} (namely Lemma
\ref{commutator for positive quasilinear operators}), we
provide characterizations of both the parabolic BMO space and
parabolic Campanato spaces via the boundedness on $L^q$ of
commutators of parabolic maximal operators with time lag (see
Theorem \ref{commutator theorem 1}). In Subsection
\ref{subsection3.3}, combining Theorem
\ref{parabolic extrapolation for commutator}, the Fourier series
expansion argument, and the parabolic geometry associated with
the integral domain of the parabolic fractional integral
operator with time lag, we show Theorem \ref{[I_alpha,b]}.
Finally, we prove the mutual equivalences among (i) and (viii)
through (xiii) of Theorem \ref{commutator theorem 2}.

At the end of this introduction, we make some conventions on
notation. Throughout this article, let
$\mathbb{N}:=\{1,2,\ldots\}$ and $\mathbb{Z}_{+}:=\mathbb{N}
\cup\{0\}$. For any $r\in[1,\infty]$, let $r'$ be the conjugate
number of $r$, that is, $\frac1r+\frac{1}{r'}=1$. For
any $x:=(x_1,\dots,x_n)\in\mathbb{R}^n$,
let $\|x\|_\infty:=\max\{|x_1|,\dots,|x_n|\}$ and
$|x|:=\sqrt{|x_1|^2+\dots+|x_n|^2}$. Let
$\mathbf{0}$ denote the origin of $\mathbb{R}^{n}$.
For any $A\subset\mathbb{R}^{n+1}$ and $a\in\mathbb{R}$,
let $A+(\mathbf{0},a):=\{(x,t+a):\ (x,t)\in A\}$ and
$A-(\mathbf{0},a):=\{(x,t-a):\ (x,t)\in A\}$.
For any measurable set $A\subset\mathbb{R}^{n+1}$, we denote
by $\vert A\vert$ its $(n+1)$-dimensional Lebesgue measure.
For any weight $\omega$ and any measurable set
$A\subset\mathbb{R}^{n+1}$, let
$\omega(A):=\int_A\omega$. For any given Banach spaces
$\mathscr{X}$ and $\mathscr{Y}$ and for any bounded operator
$T:\ \mathscr{X}\to\mathscr{Y}$, we denote the operator norm of
$T$ from $\mathscr{X}$ to $\mathscr{Y}$ by
$\|T\|_{\mathscr{L}(\mathscr{X},\mathscr{Y})}$. In particular,
if $\mathscr{X}=\mathscr{Y}$, then we simply write
$\|T\|_{\mathscr{L}(\mathscr{X},\mathscr{Y})}$ as
$\|T\|_{\mathscr{L}(\mathscr{X})}$. The symbol $C$ denotes a
positive constant which is independent
of the main parameters involved, but may vary from line to line.
The symbol $f\lesssim g$ means that
there exists a positive constant $C$ such that $f\le Cg$ and, if
$f\lesssim g\lesssim f$, we then write $f\sim g$. Finally, when
we show a theorem (and the like), in its proof we always use
the same symbols as those appearing in the statement itself of
that theorem (and the like).

\section{Parabolic Extrapolation}
\label{section2}

This section consists of three subsections. In Subsection
\ref{subsection2.1}, we first prove Theorem
\ref{parabolic extrapolation}. After that, we give its several
extensions, while, in Subsection \ref{subsection2.2},
we establish the parabolic extrapolation at
infinity, namely Theorem \ref{parabolic extrapolation infinity}.
Finally, in Subsection \ref{subsection2.3},
as an application, we provide a parabolic extrapolation
theorem for commutators of linear operators.

\subsection{Parabolic Rubio de Francia Extrapolation and Its Extensions}
\label{subsection2.1}

We begin with recalling the concept of uncentered parabolic
fractional maximal operators with time lag; see also
\cite[Definition 2.4]{kyyz-2024-2}.

\begin{definition}\label{parabolic maximal operator}
Let $\gamma,\alpha\in[0,1)$. The \emph{uncentered
parabolic fractional maximal operators $M^{\gamma+}_\alpha$
and $M^{\gamma-}_\alpha$ with time lag} are defined,
respectively, by setting, for any $f\in L_{\mathrm{loc}}^1$ and
$(x,t)\in\mathbb{R}^{n+1}$,
\begin{align*}
M^{\gamma+}_\alpha(f)(x,t):=\sup_{\genfrac{}{}{0pt}{}{R\in
\mathcal{R}_{p}^{n+1}}{(x,t)\in R^-(\gamma)}}
\left|R^+(\gamma)\right|^\alpha
\fint_{R^+(\gamma)}|f|
\end{align*}
and
\begin{align*}
M^{\gamma-}_\alpha(f)(x,t):=\sup_{\genfrac{}{}{0pt}{}{R\in
\mathcal{R}_{p}^{n+1}}{(x,t)\in R^+(\gamma)}}
\left|R^-(\gamma)\right|^\alpha
\fint_{R^-(\gamma)}|f|.
\end{align*}
\end{definition}

We simply denote $M^{\gamma+}_0$ and $M^{\gamma-}_0$,
respectively, by $M^{\gamma+}$ and $M^{\gamma-}$, which are
also called the \emph{uncentered
parabolic maximal operators}; see \cite[Definition
2.2]{km(am-2024)}. To show Theorem
\ref{parabolic extrapolation}, we need the following lemma.

\begin{lemma}\label{weighted inequality uncentered}
Let $\gamma\in(0,1)$ and $\alpha,\rho\in[0,1)$, and let
$\omega$, $u$, and $v$ be weights.
\begin{enumerate}
\item[\rm(i)] Let $1<r\leq q<\infty$ with
$\frac{1}{r}-\frac{1}{q}=\alpha$. Then $\omega\in
A_{r,q}^+(\gamma)$ if and only if $M^{\gamma+}_\alpha$ is
bounded from $L^r(\omega^r)$ to
$L^q(\omega^q)$.

\item[\rm(ii)] Let $1\leq r\leq q<\infty$
with $\frac{1}{r}-\frac{1}{q}=\alpha$. Then $\omega\in
A_{r,q}^+(\rho)$ if and only if $M^{\rho+}_\alpha$ is
bounded from $L^r(\omega^r)$ to
$L^{q,\infty}(\omega^q)$.

\item[\rm(iii)] Let $r\in(1,\infty]$ satisfy $\frac{1}{r}=
\alpha$. Then $(u,v)\in TA_{r,\infty}^+(\rho)$ if and only
if there exists a positive constant $C$ such that, for any
$f\in L^r(v^r)$,
\begin{align}\label{20241112.2251}
\left\|M^{\rho+}_\alpha(f)u\right\|_{L^\infty}\leq C\|fv\|_{L^r}.
\end{align}
\end{enumerate}
\end{lemma}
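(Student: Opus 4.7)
The plan is to prove each of the three assertions by combining a direct test-function argument for necessity with a Calder\'on--Zygmund-type covering argument for sufficiency, following the template of the classical Muckenhoupt--Wheeden characterization of fractional maximal operators but adapted to the asymmetric time-lag geometry of $R^-(\gamma)$ and $R^+(\gamma)$.

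For necessity in (i) and (ii), I would fix any parabolic rectangle $R$ and test the hypothesized boundedness on $f:=\omega^{-r'}\chi_{R^+(\gamma)}$. For every $(x,t)\in R^-(\gamma)$, the definition of $M_\alpha^{\gamma+}$ yields
$$M_\alpha^{\gamma+}(f)(x,t)\geq |R^+(\gamma)|^\alpha\fint_{R^+(\gamma)}\omega^{-r'}.$$
Inserting this into the assumed strong-type (respectively weak-type) inequality, computing $\int f^r\omega^r=\int_{R^+(\gamma)}\omega^{-r'}$, using $|R^-(\gamma)|=|R^+(\gamma)|$, and rearranging with $\alpha=\frac{1}{r}-\frac{1}{q}$ collapses to exactly
$$\left(\fint_{R^-(\gamma)}\omega^q\right)^{\frac{1}{q}}\left(\fint_{R^+(\gamma)}\omega^{-r'}\right)^{\frac{1}{r'}}\leq C,$$
which is the $A_{r,q}^+(\gamma)$ condition. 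The analogous test $f=v^{-r'}\chi_{R^+(\rho)}$ handles necessity in (iii).

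For sufficiency in (ii), I would linearize the level set $E_\lambda:=\{M_\alpha^{\rho+}f>\lambda\}$ by selecting, for each $(x,t)\in E_\lambda$, a parabolic rectangle $R_{(x,t)}$ with $(x,t)\in R_{(x,t)}^-(\rho)$ and $\lambda<|R_{(x,t)}^+(\rho)|^\alpha\fint_{R_{(x,t)}^+(\rho)}|f|$. A Vitali-type selection adapted to the parabolic setting with time lag (available from the prior literature on parabolic Muckenhoupt weights cited in the introduction) yields a countable subfamily $\{R_j\}$ whose lower parts cover $E_\lambda$ up to a null set with controlled overlap, leading to
$$\lambda^q\omega^q(E_\lambda)\lesssim\sum_j\left(|R_j^+(\rho)|^\alpha\fint_{R_j^+(\rho)}|f|\right)^q\omega^q(R_j^-(\rho)).$$
Applying H\"older's inequality on $R_j^+(\rho)$ together with the $A_{r,q}^+(\rho)$ condition, which precisely couples $\omega^q$ on $R_j^-(\rho)$ with $\omega^{-r'}$ on $R_j^+(\rho)$, then yields the weak-type bound. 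The strong type in (i) follows from the weak type via the standard openness (self-improvement) of the $A_{r,q}^+(\gamma)$ class combined with Marcinkiewicz-type interpolation between two pairs $(r-\varepsilon,q-\varepsilon_1)$ and $(r+\varepsilon,q+\varepsilon_2)$ that both preserve $\frac{1}{r}-\frac{1}{q}=\alpha$. Sufficiency in (iii) is the simplest: for any $(x,t)$ and any $R\ni (x,t)$ in $R^-(\rho)$, H\"older's inequality on $R^+(\rho)$ with exponents $r,r'$ together with $\frac{1}{r}=\alpha$ and the $TA_{r,\infty}^+(\rho)$ condition gives $|R^+(\rho)|^\alpha\fint_{R^+(\rho)}|f|\leq C\|fv\|_{L^r}u(x,t)^{-1}$ pointwise.

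The principal technical hurdle is the parabolic Vitali-type covering step underpinning (ii): because $R^+(\rho)$ and $R^-(\rho)$ live on disjoint time slabs, one must simultaneously manage both halves to retain the finite-overlap property needed to sum over $\{R_j\}$, while still being able to pair $\omega^q$ on $R_j^-(\rho)$ against $\omega^{-r'}$ on $R_j^+(\rho)$ via the Muckenhoupt condition. Fortunately, time-lag covering machinery of this kind has already been developed in the prior works on parabolic Muckenhoupt weights with time lag, so this step can be invoked rather than built from scratch, and the rest of the argument then reduces to bookkeeping that mirrors the classical Euclidean case.
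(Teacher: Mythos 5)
Your treatment of part (iii) is essentially identical to the paper's: the paper proves the direction ``boundedness $\Rightarrow$ weight condition'' by testing on $v^{-r'}\boldsymbol{1}_{R^+(\rho)}$ and the converse by H\"older's inequality on $R^+(\rho)$ combined with $\frac{1}{r}=\alpha$ and the $TA_{r,\infty}^+(\rho)$ condition, exactly as you propose. The divergence is in parts (i) and (ii): the paper does not prove them at all, but cites them wholesale from the prior work \cite{kyyz-2024-2} (Corollary 4.4 and Theorems 4.1 and 4.3 there), whereas you sketch a from-scratch argument. Your necessity computation via the test function $\omega^{-r'}\boldsymbol{1}_{R^+(\gamma)}$ is correct (modulo the routine a priori truncation ensuring $\int_{R^+(\gamma)}\omega^{-r'}<\infty$), and the overall route you describe for sufficiency --- covering argument for the weak type, then self-improvement of $A_{r,q}^+(\gamma)$ plus off-diagonal Marcinkiewicz interpolation for the strong type --- is indeed the route taken in the cited literature. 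However, you should be aware that the two steps you treat as black boxes are precisely where all the difficulty of the time-lag setting lives: a naive Vitali selection giving bounded overlap of the lower parts $R_j^-(\rho)$ does not automatically control the overlap of the upper parts $R_j^+(\rho)$ (which sit on disjoint forward-in-time slabs), so the summation $\sum_j\int_{R_j^+(\rho)}|f|\lesssim\|f\|_{L^1}$ requires the chaining/forwarding machinery of Kinnunen--Saari and its fractional refinement, not a standard covering lemma; likewise the openness of $A_{r,q}^+(\gamma)$ rests on a parabolic reverse H\"older inequality that only holds after a time shift (compare Lemma \ref{improving weight exponent} in this paper, which needs \cite[Corollary 5.3]{km(am-2024)} and a reduction to $\gamma$ close to $1$). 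Since you explicitly defer both to the prior literature, your argument ultimately rests on the same external results the paper invokes, just at the level of auxiliary lemmas rather than the full statements; as a self-contained proof it would be incomplete, but as a reconstruction of the cited proofs it is sound.
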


\begin{proof}
(i) is given in \cite[Corollary 4.4]{kyyz-2024-2}, while (ii)
is given in \cite[Theorems 4.1 and 4.3]{kyyz-2024-2}.
Next, we prove the sufficiency of (iii). Fix
$R\in\mathcal{R}_p^{n+1}$. Assume that \eqref{20241112.2251}
holds. From \eqref{20241112.2251} with $f$ therein
replaced by $v^{-r'}\mathbf{1}_{R^+(\rho)}$ and from
Definition \ref{parabolic maximal operator},
we infer that, for almost every $(x,t)\in R^-(\rho)$,
\begin{align*}
u(x,t)\left|R^+(\rho)\right|^\alpha\fint_{R^+(\rho)}v^{-r'}
&\leq u(x,t)M^{\rho+}_\alpha\left(v^{-r'}
\boldsymbol{1}_{R^+(\rho)}\right)(x,t)\\
&\leq\left\|uM^{\rho+}_\alpha
\left(v^{-r'}\boldsymbol{1}_{R^+(\rho)}\right)\right\|
_{L^\infty}\lesssim
\left\|v^{1-r'}\boldsymbol{1}_{R^+(\rho)}
\right\|_{L^r}\\
&=\begin{cases}
\left[\int_{R^+(\rho)}v^{-r'}\right]^{\frac{1}{r}}&\mbox{if\ }
r\in(1,\infty),\\
1&\mbox{if\ }r=\infty,
\end{cases}
\end{align*}
which, together with $\frac{1}{r}=\alpha$, further implies that
\begin{align*}
\mathop\mathrm{ess\,sup}_{(x,t)\in R^-(\rho)}u(x,t)
\lesssim\left[\fint_{R^+(\rho)}v^{-r'}\right]^{-\frac{1}{r'}}.
\end{align*}
Taking the supremum over all $R\in\mathcal{R}_p^{n+1}$,
we then conclude that $(u,v)\in TA_{r,\infty}^+(\rho)$.
This finishes the proof of the sufficiency of (iii).

Now, we show the necessity of (iii). Let $f\in
L_{\mathrm{loc}}^1$ be such that $fv\in
L^r$. From H\"older's inequality and $\frac{1}{r}=\alpha$, it
follows that, for any $R\in\mathcal{R}_p^{n+1}$ and almost every
$(x,t)\in R^-(\gamma)$,
\begin{align*}
u(x,t)\left|R^+(\rho)\right|^\alpha\fint_{R^+(\rho)}|f|
&\leq u(x,t)\left|R^+(\rho)\right|^{\alpha-1}
\left\|fv\boldsymbol{1}_{R^+(\rho)}\right\|
_{L^r}\left[\int_{R^+(\rho)}v^{-r'}\right]
^{\frac{1}{r'}}\\
&\leq\mathop\mathrm{ess\,sup}_{(y,s)\in
R^-(\rho)}u(y,s)
\left[\fint_{R^+(\rho)}v^{-r'}\right]^{\frac{1}{r'}}\|fv\|_{L^r}\\
&\leq[u,v]_{TA_{r,\infty}^+(\rho)}\|fv\|_{L^r},
\end{align*}
which further implies that \eqref{20241112.2251} holds. This
finishes the proof of (iii) and hence
Lemma \ref{weighted inequality uncentered}.
\end{proof}

By borrowing some ideas from the proof of \cite[Theorem
3.23]{Cruz-book}, we are ready to
prove Theorem~\ref{parabolic extrapolation}.

\begin{proof}[Proof of Theorem \ref{parabolic extrapolation}]
Let $1<r\leq q<\infty$ satisfy \eqref{1113},
$\omega\in A_{r,q}^+(\gamma)$,
and $(f,g)\in\mathcal{F}$ with $f\in L^q(\omega^q)$.
Without loss of generality, we
may assume that $\|f\|_{L^q(\omega^q)},
\|g\|_{L^r(\omega^r)}\in(0,\infty)$.
Let $s\in[1,\min\{q_0,\,q\}]$ satisfy
\begin{align}\label{1412}
\frac{1}{s'}=\frac{1}{r}-
\frac{1}{q}=\frac{1}{r_0}-\frac{1}{q_0}
\end{align}
and define
\begin{align*}
h_1:=\frac{|f|}{\|f\|_{L^q(\omega^q)}}+
\frac{|g|^{\frac{r}{q}}\omega^{\frac{r}{q}-1}}
{\|g\|_{L^r(\omega^r)}^{\frac{r}{q}}}.
\end{align*}
Then $\|h_1^s\|_{L^{1+\frac{q}{r'}}(\omega^q)}
^\frac{1}{s}=\|h_1\|_{L^q(\omega^q)}\leq 2$.
Applying Definition \ref{parabolic weight} and Remark
\ref{parabolic weight remark}, we obtain $\omega^q\in
A_{1+\frac{q}{r'}}^+(\gamma)$ with
$[\omega^q]_{A_{1+\frac{q}{r'}}^+(\gamma)}=
[\omega]_{A_{r,q}^+(\gamma)}^q$, which, together with Lemma
\ref{weighted inequality uncentered}(i), further implies that
$M^{\gamma+}$ is bounded on $L^{1+\frac{q}{r'}}
(\omega^q)$. Next, we introduce the following parabolic Rubio de
Francia iteration algorithm $H_1$ for $h_1$: let
\begin{align*}
H_1:=\left[\sum_{k\in\mathbb{Z}_+}\frac{(M^{\gamma+})^k(h_1^s)}
{2^k\|M^{\gamma+}\|^k_{\mathscr{L}(L^{1+\frac{q}{r'}}
(\omega^q))}}\right]^{\frac{1}{s}},
\end{align*}
where $(M^{\gamma+})^0$ denotes the identity operator and
$\|M^{\gamma+}\|_{\mathscr{L}(L^{1+\frac{q}{r'}}
(\omega^q))}$ denotes the operator norm of
$M^{\gamma+}$ on $L^{1+\frac{q}{r'}}(\omega^q)
$. From the monotone convergence theorem and \cite[Proposition
2.5]{km(am-2024)}, we deduce that the following assertions hold.
\begin{enumerate}
\item[\rm(i)] $h_1\leq H_1$.

\item[\rm(ii)] $\|H_1\|_{L^q(\omega^q)}\leq
2^{\frac{1}{s}}\|h_1^s\|_{L^{1+\frac{q}{r'}}(\omega^q)}
^{\frac{1}{s}}\leq 2^{\frac{1}{s}+1}$.

\item[\rm(iii)] $H_1^s\in A_1^-(\gamma)$ and
$[H_1^s]_{A_1^-(\gamma)}\leq2
\|M^{\gamma+}\|_{\mathscr{L}(L^{1+\frac{q}{r'}}(\omega^q))}$.
\end{enumerate}

On the other hand, by duality, there exists a nonnegative
function $h_2\in L^{(1+\frac{q}{r'})'}(\omega^q)$ such that
$\|h_2\|_{L^{(1+\frac{q}{r'})'}(\omega^q)}=1$ and
\begin{align}\label{20241205.1547}
\|f\|_{L^q(\omega^q)}=
\left\|\,|f|^s\right\|_{L^{1+\frac{q}{r'}}(\omega^q)}
^{\frac{1}{s}}=\left(\int_{\mathbb{R}^{n+1}}
|f|^sh_2\omega^q\right)^{\frac{1}{s}}.
\end{align}
Using Definition \ref{parabolic weight} and Remark
\ref{parabolic weight remark}, we find that $\omega^{-r'}\in
A_{1+\frac{r'}{q}}^-(\gamma)$ and $[\omega^{-r'}]
_{A_{1+\frac{r'}{q}}^-(\gamma)}=
[\omega]_{A_{r,q}^+(\gamma)}^{r'}$, which, together with Lemma
\ref{weighted inequality uncentered}(i), further implies that
$M^{\gamma-}$ is bounded on $L^{1+\frac{r'}{q}}
(\omega^{-r'})$. Now, we introduce the following parabolic
Rubio de Francia iteration algorithm $H_2$ for $h_2$: let
\begin{align*}
H_2:=\sum_{k\in\mathbb{Z}_+}\frac{(M^{\gamma-})^k(h_2\omega^q)}
{2^k\|M^{\gamma-}\|^k_{\mathscr{L}(L^{1+\frac{r'}{q}}(\omega^{-r'}))}}\omega^{-q},
\end{align*}
where $(M^{\gamma-})^0$ denotes the identity operator and
$\|M^{\gamma-}\|_{\mathscr{L}(L^{1+\frac{r'}{q}}
(\omega^{-r'}))}$ denotes the operator norm of
$M^{\gamma-}$ on $L^{1+\frac{r'}{q}}(\omega^{-r'})$. From
$\|h_2\omega^q\|_{L^{(1+\frac{q}{r'})'}
(\omega^{-r'})}=\|h_2\|_{L^{(1+\frac{q}{r'})'}
(\omega^q)}=1$, $(1+\frac{q}{r'})'=
1+\frac{r'}{q}$, the monotone convergence theorem, and
\cite[Proposition 2.5]{km(am-2024)}, it follows that the
following statements hold.
\begin{enumerate}
\item[\rm(iv)] $h_2\leq H_2$.

\item[\rm(v)] $\|H_2\|_{L^{(1+\frac{q}{r'})'}(\omega^q)}
\leq2\|h_2\omega^q\|_{L^{(1+\frac{q}{r'})'}(\omega^{-r'})}=2$.

\item[\rm(vi)] $H_2\omega^q\in A_1^+(\gamma)$ and
$[H_2\omega^q]_{A_1^+(\gamma)}\leq2
\|M^{\gamma-}\|_{\mathscr{L}(L^{1+\frac{r'}{q}}(\omega^{-r'}))}$.
\end{enumerate}

To show \eqref{20241204.1152}, we consider the
following two cases for $r_0$.

\emph{Case 1)} $r_0=1$. In this case, \eqref{1412} implies that
$s=q_0$. In addition, according to H\"older's inequality and
(v), we obtain
\begin{align*}
\int_{\mathbb{R}^{n+1}}|f|^{q_0}H_2\omega^q\leq
\|f\|_{L^q(\omega^q)}^{q_0}\|H_2\|
_{L^{(1+\frac{q}{r'})'}(\omega^q)}<\infty.
\end{align*}
Combining this, \eqref{20241205.1547}, $s=q_0$, (iv),
\eqref{20241204.1150}, H\"older's inequality,
$(H_2\omega^q)^{\frac{1}{q_0}}\in A_{1,q_0}^+(\gamma)$ [which
can be deduced from (vi) and Definition \ref{parabolic
weight}(ii)], and (v), we conclude that
\begin{align*}
\|f\|_{L^q(\omega^q)}&=\left(\int_{\mathbb{R}^{n+1}}
|f|^{q_0}h_2\omega^q\right)^{\frac{1}{q_0}}\leq
\left(\int_{\mathbb{R}^{n+1}}
|f|^{q_0}H_2\omega^q\right)^{\frac{1}{q_0}}\\
&\lesssim\int_{\mathbb{R}^{n+1}}|g|(H_2\omega^q)
^{\frac{1}{q_0}}\leq\|g\|_{L^r(\omega^r)}
\|H_2\|_{L^{(1+\frac{q}{r'})'}(\omega^q)}
^{\frac{1}{q_0}}\lesssim\|g\|_{L^r(\omega^r)}
\end{align*}
and hence \eqref{20241204.1152} holds in this case.

\emph{Case 2)} $r_0\in(1,q_0]$. In this case, \eqref{1412}
implies that $s<\min\{q_0,\,q\}$. From \eqref{20241205.1547},
(iv), H\"older's inequality, (ii), and (v), it follows that
\begin{align}\label{20241205.1658}
\|f\|_{L^q(\omega^q)}&=\left(\int_{\mathbb{R}^{n+1}}
|f|^sh_2\omega^q\right)^{\frac{1}{s}}\leq
\left(\int_{\mathbb{R}^{n+1}}
|f|^sH_2\omega^q\right)^{\frac{1}{s}}\\
&=\left[\int_{\mathbb{R}^{n+1}}|f|^s
H_1^{-\frac{s}{(1+\frac{q_0}{r_0'})'}}
H_1^{\frac{s}{(1+\frac{q_0}{r_0'})'}}H_2\omega^q
\right]^{\frac{1}{s}}\notag\\
&\leq\left(\int_{\mathbb{R}^{n+1}}|f|^{q_0}H_1^{s-q_0}
H_2\omega^q\right)^{\frac{1}{q_0}}
\left(\int_{\mathbb{R}^{n+1}}H_1^sH_2\omega^q
\right)^{\frac{1}{s(1+\frac{q_0}{r_0'})'}}\notag\\
&\leq\left(\int_{\mathbb{R}^{n+1}}|f|^{q_0}H_1^{s-q_0}
H_2\omega^q\right)^{\frac{1}{q_0}}
\|H_1\|_{L^q(\omega^q)}
^{\frac{1}{s(1+\frac{q_0}{r_0'})'}}
\|H_2\|_{L^{(1+\frac{q}{r'})'}(\omega^q)}
^{\frac{1}{s(1+\frac{q_0}{r_0'})'}}\notag\\
&\lesssim\left(\int_{\mathbb{R}^{n+1}}|f|^{q_0}H_1^{s-q_0}
H_2\omega^q\right)^{\frac{1}{q_0}}.\notag
\end{align}
We first prove that $W:=(H_1^{s-q_0}H_2\omega^q)^\frac{1}{q_0}
\in A_{r_0,q_0}^+(\gamma)$. Indeed, by (iii) and (vi), we find
that, for any given $R\in\mathcal{R}_p^{n+1}$.
\begin{align*}
&\left[\fint_{R^-(\gamma)}W^{q_0}\right]^{\frac{1}{q_0}}
\left[\fint_{R^+(\gamma)}W^{-r_0'}\right]^{\frac{1}{r_0'}}\\
&\quad=\left[\fint_{R^-(\gamma)}H_1^{s-q_0}H_2\omega^q\right]
^{\frac{1}{q_0}}\left[\fint_{R^+(\gamma)}H_1^{\frac{(q_0-s)r_0'}
{q_0}}(H_2\omega^q)^{-\frac{r_0'}{q_0}}\right]^{\frac{1}{r_0'}}\\
&\quad\lesssim\left[\fint_{R^+(\gamma)}H_1^s\right]
^{\frac{s-q_0}{sq_0}}\left[\fint_{R^-(\gamma)}H_2\omega^q
\right]^{\frac{1}{q_0}}\left[\fint_{R^+(\gamma)}
H_1^{\frac{(q_0-s)r_0'}{q_0}}\right]
^{\frac{1}{r_0'}}\left[\fint_{R^-(\gamma)}H_2\omega^q\right]
^{-\frac{1}{q_0}}=1.
\end{align*}
Taking the supremum over all $R\in\mathcal{R}_p^{n+1}$, we obtain
$W\in A_{r_0,q_0}^+(\gamma)$. Next, from the fact that $H_1\ge
h_1\ge\frac{|f|}{L^q(\omega^q)}$, H\"older's inequality, (ii),
and (v), we infer that
\begin{align*}
\int_{\mathbb{R}^{n+1}}|f|^{q_0}W^{q_0}&\leq
\|f\|_{L^q(\omega^q)}^{q_0}
\int_{\mathbb{R}^{n+1}}H_1^sH_2\omega^q\\
&\leq\|f\|_{L^q(\omega^q)}^{q_0}
\|H_1\|_{L^q(\omega^q)}
\|H_2\|_{L^{(1+\frac{q}{r'})'}(\omega^q)}<\infty,
\end{align*}
which, together with \eqref{20241205.1658},
$W=(H_1^{s-q_0}H_2\omega^q)^{\frac{1}{q_0}}\in
A_{r_0,q_0}^+(\gamma)$, \eqref{20241204.1150},
the fact that $H_1\ge
h_1\ge\frac{|g|^{\frac{r}{q}}\omega^{\frac{r}{q}-1}}
{\|g\|_{L^r(\omega^r)}^{\frac{r}{q}}}$,
H\"older's inequality with exponent
$\frac{q_0(1+\frac{q}{r'})'}{r_0}\in(1,\infty)$,
\begin{align*}
\left[\frac{r_0q}{r}-\frac{r_0(q_0-s)}{q_0}\right]
\left[\frac{q_0(1+\frac{q}{r'})'}{r_0}\right]'=q,
\end{align*}
(ii), and (v), further implies that
\begin{align*}
\|f\|_{L^q(\omega^q)}&\lesssim
\left(\int_{\mathbb{R}^{n+1}}|f|^{q_0}W^{q_0}\right)
^{\frac{1}{q_0}}\lesssim\left(\int_{\mathbb{R}^{n+1}}
|g|^{r_0}W^{r_0}\right)^{\frac{1}{r_0}}\\
&=\left[\int_{\mathbb{R}^{n+1}}|g|^{r_0}
H_1^{\frac{r_0(s-q_0)}{q_0}}H_2^{\frac{r_0}{q_0}}
\omega^{\frac{r_0q}{q_0}}\right]^{\frac{1}{r_0}}\\
&\leq\|g\|_{L^r(\omega^r)}
\left[\int_{\mathbb{R}^{n+1}}
H_1^{\frac{r_0q}{r}-\frac{r_0(q_0-s)}{q_0}}H_2^{\frac{r_0}{q_0}}
\omega^q\right]^{\frac{1}{r_0}}\\
&\leq\|g\|_{L^r(\omega^r)}
\|H_1\|_{L^q(\omega^q)}
^{\frac{q}{r_0[\frac{q_0(1+\frac{q}{r'})'}{r_0}]'}}
\|H_2\|_{L^{(1+\frac{q}{r'})'}(\omega^q)}
^{\frac{1}{q_0}}\lesssim\|g\|_{L^r(\omega^r)}
\end{align*}
and hence \eqref{20241204.1152} holds in this case.

Finally, combining the above two cases then
completes the proof of Theorem \ref{parabolic extrapolation}.
\end{proof}

Now, we give three extensions of Theorem
\ref{parabolic extrapolation}. The first one is the following
weak type parabolic extrapolation.

\begin{corollary}\label{parabolic extrapotion cor 1}
Let $\gamma\in(0,1)$ and $1\leq r_0\leq q_0<\infty$. Let
$\mathcal{F}$ be the set of all pairs $(f,g)$ of
measurable functions on $\mathbb{R}^{n+1}$ such that,
for any $\omega_0\in A_{r_0,q_0}^+(\gamma)$,
\begin{align}\label{20241206.2101}
\|f\|_{L^{q_0,\infty}(\omega_0^{q_0})}\leq C_0
\|g\|_{L^{r_0}(\omega_0^{r_0})}
\end{align}
holds provided that the left-hand side of
\eqref{20241206.2101} is finite, where the positive constant
$C_0$ in \eqref{20241206.2101} depends only on $\mathcal{F}$,
$n$, $p$, $\gamma$, $r_0$, $q_0$, and
$[\omega_0]_{A_{r_0,q_0}^+(\gamma)}$. Then, for any
$1<r\leq q<\infty$
satisfying $\frac{1}{r}-\frac{1}{q}=\frac{1}{r_0}-\frac{1}{q_0}$
and $\omega\in A_{r,q}^+(\gamma)$, there
exists a positive constant $C$, depending only on
$\mathcal{F}$, $n$, $p$, $\gamma$, $r_0$, $q_0$, $r$, $q$, and
$[\omega]_{A_{r,q}^+(\gamma)}$, such that, for any
$(f,g)\in\mathcal{F}$,
\begin{align}\label{20241206.2102}
\|f\|_{L^{q,\infty}(\omega^q)}\leq C
\|g\|_{L^r(\omega^r)}
\end{align}
holds provided that the left-hand side of
\eqref{20241206.2102} is finite.
\end{corollary}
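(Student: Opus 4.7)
The plan is to derive Corollary \ref{parabolic extrapotion cor 1} from the strong-type parabolic extrapolation of Theorem \ref{parabolic extrapolation} via the classical level-set linearization trick. First, I would associate with every $(f,g)\in\mathcal{F}$ and every $\lambda\in(0,\infty)$ the truncated pair $(f_\lambda,g)$, where
\begin{align*}
E_\lambda:=\{|f|>\lambda\}\quad\mathrm{and}\quad f_\lambda:=\lambda\boldsymbol{1}_{E_\lambda},
\end{align*}
and assemble all such pairs into an auxiliary family
\begin{align*}
\widetilde{\mathcal{F}}:=\left\{(f_\lambda,g):\ (f,g)\in\mathcal{F},\ \lambda\in(0,\infty)\right\}.
\end{align*}
The starting observation is that $f_\lambda$ is a scalar multiple of a characteristic function, so that its strong and weak $L^{q_0}$ norms coincide and are dominated by the weak norm of $f$: for any weight $\omega_0$,
\begin{align*}
\|f_\lambda\|_{L^{q_0}(\omega_0^{q_0})}=\lambda\left[\omega_0^{q_0}(E_\lambda)\right]^{1/q_0}=\|f_\lambda\|_{L^{q_0,\infty}(\omega_0^{q_0})}\leq\|f\|_{L^{q_0,\infty}(\omega_0^{q_0})}.
\end{align*}

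Next, I would verify that $\widetilde{\mathcal{F}}$ satisfies the strong-type hypothesis of Theorem \ref{parabolic extrapolation} at the exponent pair $(r_0,q_0)$. Fixing $\omega_0\in A_{r_0,q_0}^+(\gamma)$ and assuming that $\|f_\lambda\|_{L^{q_0}(\omega_0^{q_0})}$ is finite, the display above combined with the weak-type hypothesis \eqref{20241206.2101} on $(f,g)$ would yield
\begin{align*}
\|f_\lambda\|_{L^{q_0}(\omega_0^{q_0})}\leq\|f\|_{L^{q_0,\infty}(\omega_0^{q_0})}\leq C_0\|g\|_{L^{r_0}(\omega_0^{r_0})}.
\end{align*}
With this verified, Theorem \ref{parabolic extrapolation} applied to $\widetilde{\mathcal{F}}$ would then give, for every $(r,q)$ satisfying \eqref{1113}, every $\omega\in A_{r,q}^+(\gamma)$, and every $\lambda\in(0,\infty)$,
\begin{align*}
\lambda\left[\omega^q(E_\lambda)\right]^{1/q}=\|f_\lambda\|_{L^q(\omega^q)}\leq C\|g\|_{L^r(\omega^r)},
\end{align*}
the required finiteness of $\|f_\lambda\|_{L^q(\omega^q)}=\lambda[\omega^q(E_\lambda)]^{1/q}$ being guaranteed by the standing hypothesis $\|f\|_{L^{q,\infty}(\omega^q)}<\infty$, which forces $\omega^q(E_\lambda)<\infty$ for every $\lambda>0$. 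Taking the supremum over $\lambda\in(0,\infty)$ would recover precisely \eqref{20241206.2102}.

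The main obstacle I foresee is the intermediate implication $\|f_\lambda\|_{L^{q_0}(\omega_0^{q_0})}<\infty\Rightarrow\|f\|_{L^{q_0,\infty}(\omega_0^{q_0})}<\infty$, which is needed to legitimately invoke \eqref{20241206.2101} and which is not automatic: the finiteness of $\omega_0^{q_0}(E_\lambda)$ at a single level $\lambda$ does not control the supremum defining the weak norm. I plan to circumvent this by first establishing the estimate for pairs $(f,g)$ with $f$ bounded and supported in a fixed parabolic rectangle, where the weak norm with respect to any locally integrable weight is automatically finite, and then recovering the general case by monotone truncation $f\mapsto|f|\boldsymbol{1}_{\{|f|\le N\}\cap R(\mathbf{0},0,N)}$ combined with Fatou's lemma applied to each level set as $N\to\infty$. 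The case $\|g\|_{L^{r_0}(\omega_0^{r_0})}=\infty$ may be separated out at the outset since it renders the strong-type hypothesis for $\widetilde{\mathcal{F}}$ trivially valid.
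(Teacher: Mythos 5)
Your main argument is exactly the paper's proof: both pass to the auxiliary family $\widetilde{\mathcal{F}}$ of level-set pairs $(\lambda\boldsymbol{1}_{\{|f|>\lambda\}},g)$, use the identity $\|\lambda\boldsymbol{1}_{\{|f|>\lambda\}}\|_{L^{q_0}(\omega_0^{q_0})}=\lambda[\omega_0^{q_0}(\{|f|>\lambda\})]^{1/q_0}\le\|f\|_{L^{q_0,\infty}(\omega_0^{q_0})}$ to convert the weak-type hypothesis into the strong-type hypothesis for $\widetilde{\mathcal{F}}$, apply Theorem \ref{parabolic extrapolation}, and take the supremum over $\lambda$. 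Up to the final technical remark, this is the same proof.

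Where you diverge is the proposed repair of the finiteness issue, and that repair does not work as written: the truncated pairs $(|f|\boldsymbol{1}_{\{|f|\le N\}\cap R(\mathbf{0},0,N)},g)$ need not belong to $\mathcal{F}$, and the hypothesis \eqref{20241206.2101} is assumed only for members of $\mathcal{F}$, so you have no weak-type estimate to feed into the argument for these truncations. The paper does not attempt such a repair; it verifies the strong-type inequality for $\widetilde{\mathcal{F}}$ only for those $(u,v)\in\mathcal{F}$ and $\omega_0$ with $\|u\|_{L^{q_0,\infty}(\omega_0^{q_0})}<\infty$, in line with the standing convention that all such inequalities are asserted only ``provided the left-hand side is finite.'' The residual case you identify --- $\omega_0^{q_0}(\{|u|>\lambda\})<\infty$ at a single level while the full weak norm is infinite --- is present in the paper's argument as well and is absorbed into that convention (as is standard in extrapolation arguments, where in applications $\mathcal{F}$ consists of pairs for which all relevant norms are finite by construction); it is not something a truncation of $f$ alone can cure, precisely because $\mathcal{F}$ is an abstract family that is not closed under truncation. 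So the correct course is to drop your final paragraph and state the verification exactly as the paper does.
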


\begin{proof}
Fix $1<r\leq q<\infty$ satisfying
$\frac{1}{r}-\frac{1}{q}=\frac{1}{r_0}-\frac{1}{q_0}$,
$\omega\in A_{r,q}^+(\gamma)$, and $(f,g)\in\mathcal{F}$ with
$f\in L^{q,\infty}(\omega^q)$. Let
\begin{align*}
\widetilde{\mathcal{F}}:=\left\{\left(\lambda\boldsymbol{1}
_{\{|u|>\lambda\}},v\right):\ (u,v)\in\mathcal{F},
\ \lambda\in(0,\infty)\right\}.
\end{align*}
From \eqref{20241206.2101} and the definition of
$\mathcal{F}$, we deduce that, for any $\lambda\in(0,\infty)$,
$\omega_0\in A_{r_0,q_0}^+(\gamma)$, and $(u,v)\in\mathcal{F}$
with $\|u\|_{L^{q_0,\infty}(\omega_0^{q_0})}<\infty$,
\begin{align*}
\left\|\lambda\boldsymbol{1}_{\{|u|>\lambda\}}\right\|
_{L^{q_0}(\omega_0^{q_0})}=
\lambda\left[\left(\omega_0^{q_0}\right)(\{|u|>\lambda\})\right]
^{\frac{1}{q_0}}\leq\|u\|_{L^{q_0,\infty}(\omega_0^{q_0})}\lesssim
\|v\|_{L^{r_0}(\omega_0^{r_0})}.
\end{align*}
Applying this and Theorem \ref{parabolic extrapolation},
we conclude that, for any $\lambda\in(0,\infty)$,
\begin{align*}
\lambda\left[\left(\omega^q\right)(\{|f|>\lambda\})\right]
^{\frac{1}{q}}=\left\|\lambda
\boldsymbol{1}_{\{|f|>\lambda\}}\right\|
_{L^q(\omega^q)}\lesssim
\|g\|_{L^r(\omega^r)}
\end{align*}
and hence \eqref{20241206.2102} holds by taking the supremum
over all $\lambda\in(0,\infty)$. This finishes the proof
of Corollary \ref{parabolic extrapotion cor 1}.
\end{proof}

Recall that, for any $\gamma\in[0,1)$, the
\emph{parabolic Muckenhoupt class $A_\infty^+(\gamma)$ with time
lag} is defined by setting
\begin{align*}
A_\infty^+(\gamma):=\bigcup_{q\in[1,\infty)}A_q^+(\gamma)
\end{align*}
and, for any $\omega\in A_\infty^+(\gamma)$, let
$[\omega]_{A_\infty^+(\gamma)}:=\lim_{q\to\infty}
[\omega]_{A_q^+(\gamma)}$. The second extension of Theorem
\ref{parabolic extrapolation} is the following parabolic
$A_\infty$ extrapolation.

\begin{corollary}\label{parabolic extrapotion cor 4}
Let $\gamma\in(0,1)$ and $q_0\in(0,\infty)$. Let $\mathcal{F}$
be the set of all pairs $(f,g)$ of measurable
functions on $\mathbb{R}^{n+1}$ such that, for any
$\omega_0\in A_\infty^+(\gamma)$,
\begin{align}\label{20240519.1601}
\|f\|_{L^{q_0}(\omega_0)}\leq C_0\|g\|_{L^{q_0}(\omega_0)}
\end{align}
holds provided that the left-hand of
\eqref{20240519.1601} is finite, where the positive constant
$C_0$ in \eqref{20240519.1601} depends only on $\mathcal{F}$,
$n$, $p$, $\gamma$, $q_0$, and
$[\omega_0]_{A_\infty^+(\gamma)}$. Then, for any
$q\in(0,\infty)$ and $\omega\in A_\infty^+(\gamma)$, there
exists a positive constant $C$, depending only on
$\mathcal{F}$, $n$, $p$, $\gamma$, $q_0$, $q$, and
$[\omega]_{A_\infty^+(\gamma)}$,
such that, for any $(f,g)\in\mathcal{F}$,
\begin{align}\label{20240519.1609}
\|f\|_{L^q(\omega)}\leq C\|g\|_{L^q(\omega)}
\end{align}
holds provided that the left-hand side of
\eqref{20240519.1609} is finite.
\end{corollary}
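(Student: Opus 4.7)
The plan is to prove Corollary \ref{parabolic extrapotion cor 4} by adapting the Cruz-Uribe--Martell--P\'erez $A_\infty$ extrapolation argument to the parabolic setting with time lag. Unlike Theorem \ref{parabolic extrapolation}, the hypothesis here concerns $\omega_0$ appearing directly in the weighted Lebesgue norm rather than $\omega_0^{q_0}$, and the conclusion must cover the full range $q\in(0,\infty)$ for arbitrary $\omega\in A_\infty^+(\gamma)$; this forces an argument that goes beyond a direct application of Theorem \ref{parabolic extrapolation}. As a first step, I would reduce to the case $q_0=1$ via the substitution $(F,G):=(|f|^{q_0},|g|^{q_0})$: the hypothesis becomes $\int F\,\omega_0\le C_0^{q_0}\int G\,\omega_0$ for every $\omega_0\in A_\infty^+(\gamma)$, while the desired conclusion for exponent $q$ is equivalent to $\|F\|_{L^{q/q_0}(\omega)}\lesssim\|G\|_{L^{q/q_0}(\omega)}$, so it suffices to handle $q_0=1$ and establish the conclusion for every $\tilde q\in(0,\infty)$ and $\omega\in A_\infty^+(\gamma)$.

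For $\tilde q\in(1,\infty)$ (the case $\tilde q=1$ being immediate from the hypothesis), I would invoke duality: pick $h\ge 0$ with $\|h\|_{L^{\tilde q'}(\omega)}=1$ such that $\|f\|_{L^{\tilde q}(\omega)}=\int|f|h\,\omega$. The key step is to construct a majorant $H\ge h$ with $\|H\|_{L^{\tilde q'}(\omega)}\le 2$ and $H\omega\in A_\infty^+(\gamma)$. To build $H$, I would run a parabolic Rubio de Francia iteration in the spirit of the proof of Theorem \ref{parabolic extrapolation}, but based on a weighted backward parabolic maximal operator $M^{\gamma-}_\omega$ defined through averages of $\omega\,dx\,dt$ over the $\gamma$-lower parts of parabolic rectangles. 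By a standard dyadic/martingale argument, $M^{\gamma-}_\omega$ is bounded on $L^{\tilde q'}(\omega)$ for $\tilde q'>1$ with norm depending only on $\tilde q'$, irrespective of whether $\omega$ is Muckenhoupt. The iterate $H$ is then a parabolic $A_1$-type weight relative to $\omega\,dx\,dt$, and a parabolic analogue of the classical factorization $A_1\cdot A_\infty\subset A_\infty$ yields $H\omega\in A_\infty^+(\gamma)$. Applying the hypothesis with $\omega_0:=H\omega$ and H\"older's inequality then gives
\begin{align*}
\|f\|_{L^{\tilde q}(\omega)}=\int|f|h\,\omega\le\int|f|H\,\omega\lesssim\int|g|H\,\omega\le\|g\|_{L^{\tilde q}(\omega)}\|H\|_{L^{\tilde q'}(\omega)}\lesssim\|g\|_{L^{\tilde q}(\omega)}.
\end{align*}

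For $\tilde q\in(0,1)$ duality breaks down, so I would argue by a separate route: combine the case $\tilde q=1$ proved above with a weak-type version of the hypothesis supplied by Corollary \ref{parabolic extrapotion cor 1}, and apply a Kolmogorov-type inequality over level sets to convert the weak estimate into the desired strong $L^{\tilde q}$ bound for $\tilde q<1$. Alternatively, one may exploit reverse H\"older self-improvement of $A_\infty^+(\gamma)$ weights to lift the estimate to an exponent $\ge 1$ and recover the small exponent case through H\"older's inequality, which reduces this case to what is proved in the previous paragraph.

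The main obstacle lies in verifying that the iterate $H\omega$ belongs to $A_\infty^+(\gamma)$. The asymmetric time lag forces the iteration to be performed with the \emph{backward} weighted maximal operator in order that the resulting averages align with the time-forward factor $R^+(\gamma)$ appearing in the $A_\infty^+(\gamma)$ condition; moreover, the parabolic factorization $A_1\cdot A_\infty\subset A_\infty$ must be established via a reverse H\"older inequality tailored to the time-lagged parabolic geometry, which is the technical crux specific to the present setting and is where the proof departs most substantively from the non-parabolic template.
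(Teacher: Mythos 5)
Your opening claim that the conclusion ``forces an argument that goes beyond a direct application of Theorem \ref{parabolic extrapolation}'' is where you go astray: the paper proves this corollary precisely by reducing it to Theorem \ref{parabolic extrapolation}. The point is that $\omega\in A_\infty^+(\gamma)$ implies $\omega\in A_r^+(\gamma)$ for all sufficiently large $r$ (by the openness/monotonicity of the classes, \cite[Proposition 3.4(i)]{ks(apde-2016)}), so one picks $r\ge R>\max\{q/q_0,1\}$, sets $r_0:=q_0r/q>1$, and applies the diagonal case of Theorem \ref{parabolic extrapolation} to the rescaled family $\widetilde{\mathcal{F}}=\{(|u|^{q_0/r_0},|v|^{q_0/r_0})\}$. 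The hypothesis of the corollary, being valid for \emph{every} $A_\infty^+(\gamma)$ weight, in particular supplies the $L^{r_0}$ hypothesis for every $A_{r_0}^+(\gamma)$ weight, and the conclusion at exponent $r$ with weight $\omega$ unravels to $\|f\|_{L^q(\omega)}\lesssim\|g\|_{L^q(\omega)}$ since $q_0/r_0=q/r$. This rescaling handles all $q\in(0,\infty)$ at once, including $q<1$, with no duality and no new iteration algorithm.

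By contrast, your route rests on two assertions that are genuine gaps in the present setting. First, you claim that the weighted backward parabolic maximal operator $M^{\gamma-}_\omega$ is bounded on $L^{\tilde q'}(\omega)$ for an \emph{arbitrary} weight $\omega$ ``by a standard dyadic/martingale argument.'' The martingale argument applies to dyadic maximal operators whose averaging sets contain the point of evaluation; here the operator averages over $R^-(\gamma)$ while the evaluation point lies in the disjoint set $R^+(\gamma)$, so the operator is genuinely one-sided/off-support, and no such universal weighted bound is available in the paper or follows from standard covering arguments in the time-lagged parabolic geometry. Second, the factorization-type claim $H\omega\in A_\infty^+(\gamma)$ for an iterate $H$ that is ``$A_1$ relative to $\omega\,dx\,dt$'' is not established anywhere in the paper and is not a routine adaptation: the parabolic $A_1^{\pm}(\gamma)$ conditions compare averages over time-shifted, disjoint boxes, so the classical proof of $A_1\cdot A_\infty\subset A_\infty$ does not transfer without substantial new work. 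You correctly flag these as the crux, but flagging them does not close them, and the corollary can be proved without ever confronting them. Your treatment of $\tilde q<1$ is likewise only a sketch. I would recommend replacing the whole construction by the rescaling reduction above.
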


\begin{proof}
Fix $q\in(0,\infty)$, $\omega\in A_\infty^+(\gamma)$, and
$(f,g)\in\mathcal{F}$ with $f\in L^q(\omega)$.
Without loss of generality, we may assume that
$\|f\|_{L^q(\omega)},\|g\|_{L^q(\omega)}\in(0,\infty)$.
According to the definition of $A_\infty^+(\gamma)$ and
\cite[Proposition 3.4(i)]{ks(apde-2016)}, we find that there
exists $R\in(\max\{\frac{q}{q_0},\,1\},\infty)$ such that, for
any $r\in[R,\infty)$, $\omega\in A_r^+(\gamma)$. Fix such $r$
and let $\widetilde{\mathcal{F}}:=\{(|u|^{\frac{q_0}{r_0}},
|v|^{\frac{q_0}{r_0}}):\ (u,v)\in\mathcal{F}\}$, where
$r_0\in(1,\infty)$ is determined later. Using
\eqref{20240519.1601}, we conclude that, for any $\omega_0\in
A_{r_0}^+(\gamma)$ and $(u,v)\in\mathcal{F}$,
\begin{align*}
\left\|\left|u\right|^{\frac{q_0}{r_0}}\right\|^{r_0}
_{L^{r_0}(\omega_0)}=\int_{\mathbb{R}^{n+1}}\left|u\right|^{q_0}\omega_0
\lesssim\int_{\mathbb{R}^{n+1}}\left|v\right|^{q_0}\omega_0=
\left\|\left|v\right|^{\frac{q_0}{r_0}}\right\|_{L^{r_0}(\omega_0)}^{r_0}.
\end{align*}
From this and Theorem \ref{parabolic extrapolation}, we infer
that
\begin{align}\label{20240519.1744}
\left\|\left|f\right|^{\frac{q_0}{r_0}}\right\|_{L^r(\omega)}\lesssim
\left\|\left|g\right|^{\frac{q_0}{r_0}}\right\|_{L^r(\omega)}.
\end{align}
Finally, let $r_0:=\frac{q_0r}{q}$. By the arbitrariness of
$r\in[R,\infty)$, the definition of
$[\omega]_{A_\infty^+(\gamma)}$,
and \eqref{20240519.1744}, we obtain \eqref{20240519.1609},
which completes the proof of Corollary
\ref{parabolic extrapotion cor 4}.
\end{proof}

The third extension of Theorem \ref{parabolic extrapolation} is
the following vector-valued version of the
parabolic extrapolation.

\begin{corollary}\label{parabolic extrapotion cor 2}
Let $\gamma\in(0,1)$ and $1\leq r_0\leq q_0<\infty$. Let
$\mathcal{F}$ be the set of all pairs $(f,g)$ of
measurable functions on $\mathbb{R}^{n+1}$ such that,
for any $\omega_0\in A_{r_0,q_0}^+(\gamma)$,
\begin{align}\label{20241207.2044}
\|f\|_{L^{q_0}(\omega_0^{q_0})}\leq
C_0\|g\|_{L^{r_0}(\omega_0^{r_0})}
\end{align}
holds provided that the left-hand side of
\eqref{20241207.2044} is finite, where the positive constant
$C_0$ in \eqref{20241207.2044} depends only on $\mathcal{F}$,
$n$, $p$, $\gamma$, $r_0$, $q_0$, and
$[\omega_0]_{A_{r_0,q_0}^+(\gamma)}$. Then, for any
$1<r\leq q<\infty$ satisfying
$\frac{1}{r}-\frac{1}{q}=\frac{1}{r_0}-\frac{1}{q_0}$,
$s\in(1,\infty)$, and $\omega\in A_{r,q}^+(\gamma)$, there
exists a positive constant $C$, depending only on $\mathcal{F}$,
$n$, $p$, $\gamma$, $r_0$, $q_0$, $r$, $q$, $s$, and
$[\omega]_{A_{r,q}^+(\gamma)}$, such that, for any sequence
$\{(f_k,g_k)\}_{k\in\mathbb{N}}$ in $\mathcal{F}$,
\begin{align}\label{20240517.1757}
\left\|\left(\sum_{k\in\mathbb{N}}\left|f_k\right|^s\right)
^{\frac{1}{s}}\right\|_{L^q(\omega^q)}\leq
C\left\|\left(\sum_{k\in\mathbb{N}}\left|g_k\right|^s\right)
^{\frac{1}{s}}\right\|_{L^r(\omega^r)}
\end{align}
holds provided that the left-hand side of
\eqref{20240517.1757} is finite.
\end{corollary}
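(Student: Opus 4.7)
The plan is to reduce \eqref{20240517.1757} to a second application of Theorem \ref{parabolic extrapolation}, this time applied to the auxiliary family
$$\widetilde{\mathcal{F}}:=\left\{\left(\Bigl(\sum_{k=1}^N|f_k|^s\Bigr)^{1/s},\Bigl(\sum_{k=1}^N|g_k|^s\Bigr)^{1/s}\right):\,N\in\mathbb{N},\;\{(f_k,g_k)\}_{k=1}^N\subset\mathcal{F}\right\}.$$
Once we verify the scalar extrapolation hypothesis for $\widetilde{\mathcal{F}}$ at some pair $(r_1,q_1)$ lying on the same line $\frac{1}{r_1}-\frac{1}{q_1}=\frac{1}{r_0}-\frac{1}{q_0}$ with $1<r_1\le q_1<\infty$, Theorem \ref{parabolic extrapolation} applied to $\widetilde{\mathcal{F}}$ immediately produces the desired vector-valued bound at the target $(r,q)$ for every $\omega\in A_{r,q}^+(\gamma)$. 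The passage from finite truncations to the full series \eqref{20240517.1757} is then handled by the monotone convergence theorem, using the assumed finiteness of its left-hand side.

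The core of the argument is to choose $(r_1,q_1)$ on the prescribed line with the additional constraint $r_1\le s\le q_1$ and to verify the hypothesis by sandwiching scalar extrapolation between two Minkowski integral inequalities. First apply Theorem \ref{parabolic extrapolation} to $\mathcal{F}$ itself at the pair $(r_1,q_1)$ to obtain, for any $\omega_1\in A_{r_1,q_1}^+(\gamma)$, the scalar bound $\|f_k\|_{L^{q_1}(\omega_1^{q_1})}\le C\|g_k\|_{L^{r_1}(\omega_1^{r_1})}$ for every $k$ (finiteness of each factor follows from $|f_k|\le F$ whenever $\|F\|_{L^{q_1}(\omega_1^{q_1})}<\infty$). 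The two mixed-norm estimates
$$\|F\|_{L^{q_1}(\omega_1^{q_1})}\le\left(\sum_k\|f_k\|_{L^{q_1}(\omega_1^{q_1})}^s\right)^{1/s},\qquad\left(\sum_k\|g_k\|_{L^{r_1}(\omega_1^{r_1})}^s\right)^{1/s}\le\|G\|_{L^{r_1}(\omega_1^{r_1})},$$
the first valid by Minkowski's integral inequality in the form $\|\cdot\|_{L^p(\ell^s)}\le\|\cdot\|_{\ell^s(L^p)}$ when $p\ge s$ (with $p=q_1$) and the second in the reverse form $\|\cdot\|_{\ell^s(L^p)}\le\|\cdot\|_{L^p(\ell^s)}$ when $p\le s$ (with $p=r_1$), then chain with the term-wise scalar bound to deliver $\|F\|_{L^{q_1}(\omega_1^{q_1})}\le C\|G\|_{L^{r_1}(\omega_1^{r_1})}$, as required.

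The main obstacle is confirming the feasibility of $(r_1,q_1)$. In the diagonal case $r_0=q_0$ one is forced to take $r_1=q_1=s$, which is admissible since $s\in(1,\infty)$. In the off-diagonal case $r_0<q_0$, the constraints $r_1\le s\le q_1$, $\frac{1}{r_1}-\frac{1}{q_1}=\frac{1}{r_0}-\frac{1}{q_0}$, and $1<r_1\le q_1<\infty$ carve out a non-empty open sub-arc of the extrapolation line: the natural choice $r_1=s$ works whenever $\frac{1}{s}>\frac{1}{r_0}-\frac{1}{q_0}$, while in the remaining regime one chooses $\frac{1}{r_1}$ strictly greater than $\frac{1}{r_0}-\frac{1}{q_0}$ (to keep $q_1<\infty$) and still less than $1$ (to keep $r_1>1$). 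Once these endpoint adjustments are made, no further ingredients beyond Theorem \ref{parabolic extrapolation} and Minkowski's integral inequality are required.
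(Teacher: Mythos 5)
Your proposal is correct and follows essentially the same route as the paper: apply Theorem \ref{parabolic extrapolation} scalar-wise at an auxiliary pair $(r_1,q_1)$ on the same line with $r_1\le s\le q_1$, sandwich with the two Minkowski inequalities to verify the hypothesis for the family of finite truncations, extrapolate again to $(r,q)$, and conclude by monotone convergence. Your explicit feasibility check for $(r_1,q_1)$ is a detail the paper leaves implicit, and it is accurate.
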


\begin{proof}
Fix $1<r\leq q<\infty$ satisfying $\frac{1}{r}-\frac{1}{q}=
\frac{1}{r_0}-\frac{1}{q_0}$, $s\in(1,\infty)$, $\omega\in
A_{r,q}^+(\gamma)$, and $\{(f_k,g_k)\}_{k\in\mathbb{N}}$
in $\mathcal{F}$ satisfying
$(\sum_{k\in\mathbb{N}}|f_k|^s)^{\frac{1}{s}}\in
L^q(\omega^q)$. Let
\begin{align*}
\widetilde{\mathcal{F}}:=\left\{\left(\left[\sum_{k=1}^N
\left|u_k\right|^s\right]^{\frac{1}{s}},\left[\sum_{k=1}^N
\left|v_k\right|^s\right]^{\frac{1}{s}}\right):\
N\in\mathbb{N},\ \{(u_k,v_k)\}_{k=1}^N\text{ in }\mathcal{F}\right\}.
\end{align*}
Since $s\in(1,\infty)$, we can choose $1<r_1\leq q_1<\infty$
such that $r_1\leq s\leq q_1$ and
$\frac{1}{r_1}-\frac{1}{q_1}=\frac{1}{r_0}-\frac{1}{q_0}$.
Applying \eqref{20241207.2044} and Theorem
\ref{parabolic extrapolation}, we conclude that, for
any $\omega_1\in A_{r_1,q_1}^+(\gamma)$ and
$(u,v)\in\mathcal{F}$ with $u\in L^{q_1}(\omega_1^{q_1})$,
\begin{align*}
\|u\|_{L^{q_1}(\omega_1^{q_1})}\lesssim
\|v\|_{L^{r_1}(\omega_1^{r_1})},
\end{align*}
which, together with $r_1\leq s\leq q_1$ and
Minkowski's integral inequality, further
implies that, for any $N\in\mathbb{N}$ and
$([\sum_{k=1}^N|u_k|^s]^{\frac{1}{s}},[\sum_{k=1}^N|v_k|^s]
^\frac{1}{s})\in\widetilde{\mathcal{F}}$ with $\{u_k\}_{k=1}
^N$ in $L^{q_1}(\omega_1^{q_1})$,
\begin{align*}
\left\|\left(\sum_{k=1}^N\left|u_k\right|^s\right)
^\frac{1}{s}\right\|_{L^{q_1}(\omega_1^{q_1})}^s&\leq
\sum_{k=1}^N\|u_k\|_{L^{q_1}(\omega_1^{q_1})}^s\\
&\lesssim\sum_{k=1}^N\|v_k\|_{L^{r_1}(\omega_1^{r_1})}^s
\leq\left\|\left(\sum_{k=1}^N\left|v_k\right|^s\right)
^\frac{1}{s}\right\|_{L^{r_1}(\omega_1^{r_1})}^s.
\end{align*}
From this and Theorem \ref{parabolic extrapolation},
it follows that, for any $N\in\mathbb{N}$,
\begin{align*}
\left\|\left(\sum_{k=1}^N\left|f_k\right|^s\right)
^\frac{1}{s}\right\|_{L^q(\omega^q)}\lesssim
\left\|\left(\sum_{k=1}^N\left|g_k\right|^s\right)
^\frac{1}{s}\right\|_{L^r(\omega^r)}.
\end{align*}
This, combined with the monotone convergence theorem, further
implies that \eqref{20240517.1757} holds, which completes the
proof of Corollary \ref{parabolic extrapotion cor 2}.
\end{proof}

As an application of both Lemma
\ref{weighted inequality uncentered}(i) and Corollary
\ref{parabolic extrapotion cor 2}, we obtain the following
parabolic weighted Fefferman--Stein vector-valued inequality;
we omit the details.

\begin{corollary}\label{parabolic extrapotion cor 3}
Let $\gamma\in(0,1)$, $\alpha\in[0,1)$, $1<r\leq q<\infty$
with $\frac{1}{r}-\frac{1}{q}=\alpha$, $s\in(1,\infty)$, and
$\omega\in A_{r,q}^+(\gamma)$. Then there exists a positive
constant $C$, depending only on $n$, $p$, $\gamma$, $\alpha$,
$q$, $r$, $s$, and $[\omega]_{A_{r,q}^+(\gamma)}$, such that,
for any sequence $\{f_k\}_{k\in\mathbb{N}}$ in
$L_{\mathrm{loc}}^1$,
\begin{align*}
\left\|\left\{\sum_{k\in\mathbb{N}}\left[M^{\gamma+}_\alpha
(f_k)\right]^s\right\}^\frac{1}{s}\right\|_{L^q(\omega^q)}\leq C
\left\|\left(\sum_{k\in\mathbb{N}}\left|f_k\right|^s\right)
^\frac{1}{s}\right\|_{L^r(\omega^r)}.
\end{align*}
\end{corollary}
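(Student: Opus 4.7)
The plan is to reduce this to the vector-valued extrapolation (Corollary~\ref{parabolic extrapotion cor 2}) applied to the scalar weighted boundedness of $M^{\gamma+}_\alpha$ supplied by Lemma~\ref{weighted inequality uncentered}(i). The key observation is that the family $\mathcal{F}$ in Corollary~\ref{parabolic extrapotion cor 2} is allowed to be any collection of pairs, and once we put the parabolic fractional maximal operator and its argument inside such a pair, the off-diagonal extrapolation framework does the rest with essentially no extra work.

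Concretely, first I would fix any pair $(r_0,q_0)$ with $1<r_0\leq q_0<\infty$ and $\frac{1}{r_0}-\frac{1}{q_0}=\alpha$ (for instance, $r_0=r$ and $q_0=q$), and introduce the family
\begin{align*}
\mathcal{F}:=\left\{\left(M^{\gamma+}_\alpha(h),\,|h|\right):\ h\in L_{\mathrm{loc}}^1\right\}.
\end{align*}
By Lemma~\ref{weighted inequality uncentered}(i), for every $\omega_0\in A_{r_0,q_0}^+(\gamma)$ and every pair $(f,g)\in\mathcal{F}$, the scalar weighted estimate
\begin{align*}
\|f\|_{L^{q_0}(\omega_0^{q_0})}=\left\|M^{\gamma+}_\alpha(h)\right\|_{L^{q_0}(\omega_0^{q_0})}\leq C_0\|h\|_{L^{r_0}(\omega_0^{r_0})}=C_0\|g\|_{L^{r_0}(\omega_0^{r_0})}
\end{align*}
holds, with $C_0$ depending only on $n$, $p$, $\gamma$, $\alpha$, $r_0$, $q_0$, and $[\omega_0]_{A_{r_0,q_0}^+(\gamma)}$. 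This is exactly the hypothesis~\eqref{20241207.2044} for this family.

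Next, for the given $1<r\leq q<\infty$ with $\frac{1}{r}-\frac{1}{q}=\alpha=\frac{1}{r_0}-\frac{1}{q_0}$, $s\in(1,\infty)$, and $\omega\in A_{r,q}^+(\gamma)$, I would apply Corollary~\ref{parabolic extrapotion cor 2} to the sequence $\{(M^{\gamma+}_\alpha(f_k),\,|f_k|)\}_{k\in\mathbb{N}}\subset\mathcal{F}$. This yields
\begin{align*}
\left\|\left(\sum_{k\in\mathbb{N}}\left[M^{\gamma+}_\alpha(f_k)\right]^s\right)^{\frac{1}{s}}\right\|_{L^q(\omega^q)}\leq C\left\|\left(\sum_{k\in\mathbb{N}}\left|f_k\right|^s\right)^{\frac{1}{s}}\right\|_{L^r(\omega^r)},
\end{align*}
with $C$ having the claimed dependencies, which is the desired inequality.

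The only technical point to address is the finiteness proviso built into Corollary~\ref{parabolic extrapotion cor 2}. If the right-hand side of the target inequality is infinite there is nothing to prove; otherwise, the pointwise bound $|f_k|\leq(\sum_j|f_j|^s)^{1/s}$ together with Lemma~\ref{weighted inequality uncentered}(i) gives $M^{\gamma+}_\alpha(f_k)\in L^q(\omega^q)$ for each $k$, so every finite partial sum $(\sum_{k=1}^N[M^{\gamma+}_\alpha(f_k)]^s)^{1/s}$ lies in $L^q(\omega^q)$ by the triangle inequality; one then applies Corollary~\ref{parabolic extrapotion cor 2} to the truncated family $\{(M^{\gamma+}_\alpha(f_k),|f_k|)\}_{k=1}^N$ and lets $N\to\infty$ via the monotone convergence theorem. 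Since the heavy lifting, namely the construction of the parabolic Rubio de Francia iteration algorithm and the vector-valued bootstrapping, has already been carried out in Subsection~\ref{subsection2.1}, there is no substantive obstacle here, which is precisely why the authors omit the details.
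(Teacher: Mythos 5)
Your proposal is correct and is exactly the argument the paper intends: the authors state that Corollary \ref{parabolic extrapotion cor 3} follows from Lemma \ref{weighted inequality uncentered}(i) together with Corollary \ref{parabolic extrapotion cor 2} and omit the details, and your choice of the family $\{(M^{\gamma+}_\alpha(h),|h|)\}$ with $(r_0,q_0)=(r,q)$ is the natural instantiation. Your handling of the finiteness proviso via truncation and monotone convergence is also consistent with how Corollary \ref{parabolic extrapotion cor 2} is itself proved.
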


\subsection{Parabolic Extrapolation at Infinity}
\label{subsection2.2}

Denote by $\mathscr{M}$ the set of all measurable functions on
$\mathbb{R}^{n+1}$. Let $T$ be an operator defined on a linear
subspace $D(T)$ of $L_{\mathrm{loc}}^1$ and taking values in
$\mathscr{M}$. Then $T$ is said to be \emph{sublinear} if, for
any $f,g\in D(T)$ and $\lambda\in\mathbb{C}$,
$|T(f+g)|\leq|T(f)|+|T(g)|$ and $|T(\lambda
f)|=|\lambda|\,|T(f)|$. Let $C_\mathrm{c}$ be the set of all
continuous functions on $\mathbb{R}^{n+1}$ with compact support.
Then the parabolic Rubio de Francia extrapolation at infinity
can be stated as follows.

\begin{theorem}\label{parabolic extrapolation infinity}
Let $\gamma\in[0,1)$, $1\leq \tau<r_0\leq\infty$, and
$T:\ C_\mathrm{c}\to\mathscr{M}$ be a sublinear operator. If,
for any pairs $(u_0,v_0)$ of nonnegative functions
on $\mathbb{R}^{n+1}$ with $(u_0^\tau,v_0^\tau)\in
TA_{\frac{r_0}{\tau},\infty}^+(\gamma)$, there exists a
positive constant $C_0$, depending only on $n$, $p$, $\gamma$,
$\tau$, $r_0$, and $[u_0^\tau,v_0^\tau]
_{TA_{\frac{r_0}{\tau},\infty}^+(\gamma)}$, such that, for any
$f\in C_\mathrm{c}$,
\begin{align}\label{20241206.1932}
\|T(f)u_0\|_{L^\infty}\leq C_0
\|fv_0\|_{L^{r_0}},
\end{align}
then, for any $r\in(\tau,r_0)$, $q\in(r,\infty)$ with
$\frac{1}{q}=\frac{1}{r}-\frac{1}{r_0}$, and any
nonnegative function $\omega$ on $\mathbb{R}^{n+1}$ with
$\omega^\tau\in A_{\frac{r}{\tau},\frac{q}{\tau}}^+(\gamma)$,
there exists a positive constant $C$, depending only on $n$,
$p$, $\gamma$, $\tau$, $r_0$, $r$, $q$, and $[\omega^\tau]
_{A_{\frac{r}{\tau},\frac{q}{\tau}}^+(\gamma)}$, such that,
for any $f\in C_\mathrm{c}$,
\begin{align}\label{20241206.2039}
\|T(f)\|_{L^{q,\infty}(\omega^q)}\leq C\|f\|_{L^r(\omega^r)}.
\end{align}
\end{theorem}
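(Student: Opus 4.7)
The plan is to combine Kolmogorov's characterization of the weak Lebesgue space with a parabolic Rubio de Francia iteration algorithm, following \cite{mr(prse-2000)} and mirroring the construction in the proof of Theorem \ref{parabolic extrapolation}. Fix $r\in(\tau,r_0)$, $q\in(r,\infty)$ with $1/q=1/r-1/r_0$, a nonnegative function $\omega$ with $\omega^\tau\in A_{r/\tau,q/\tau}^+(\gamma)$, and $f\in C_\mathrm{c}$. First, since $\tau<q<\infty$, Kolmogorov's inequality yields
\[
\|T(f)\|_{L^{q,\infty}(\omega^q)}^\tau\sim\sup_{0<\omega^q(E)<\infty}\omega^q(E)^{\tau/q-1}\int_E|T(f)|^\tau\omega^q,
\]
so \eqref{20241206.2039} reduces to proving
\[
\int_E|T(f)|^\tau\omega^q\lesssim\omega^q(E)^{1-\tau/q}\|f\|_{L^r(\omega^r)}^\tau
\]
uniformly over all measurable $E\subset\mathbb{R}^{n+1}$ with $\omega^q(E)\in(0,\infty)$.

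For each such $E$, the next step is to construct an auxiliary pair $(u_0,v_0)$ of nonnegative functions, depending on $E$, $f$, and $\omega$, such that $(u_0^\tau,v_0^\tau)\in TA_{r_0/\tau,\infty}^+(\gamma)$ with constant controlled by $[\omega^\tau]_{A_{r/\tau,q/\tau}^+(\gamma)}$. I would build this pair via a parabolic Rubio de Francia iteration of the form developed in the proof of Theorem \ref{parabolic extrapolation}: from $\omega^\tau\in A_{r/\tau,q/\tau}^+(\gamma)$ and Lemma \ref{weighted inequality uncentered}(i), the operator $M^{\gamma+}_{\tau/r_0}$ is bounded from $L^{r/\tau}(\omega^r)$ to $L^{q/\tau}(\omega^q)$ and, on the dual side, $M^{\gamma-}$ is bounded on an associated weighted space; summing geometric series of iterates of each operator with seeds proportional to $\mathbf{1}_E\omega^q/\omega^q(E)$ (for $u_0$) and to a suitable power of $|f|\omega/\|f\|_{L^r(\omega^r)}$ (for $v_0$) produces respectively $A_1^-(\gamma)$- and $A_1^+(\gamma)$-type majorants that combine multiplicatively into the desired pair.

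Once $(u_0,v_0)$ is in hand, the hypothesis \eqref{20241206.1932} applied to this pair gives $|T(f)|u_0\le C_0\|fv_0\|_{L^{r_0}}$ almost everywhere, so
\[
\int_E|T(f)|^\tau\omega^q\le C_0^\tau\|fv_0\|_{L^{r_0}}^\tau\int_Eu_0^{-\tau}\omega^q.
\]
The design of the iteration should force $\int_Eu_0^{-\tau}\omega^q\lesssim\omega^q(E)^{1-\tau/q}$, while H\"older's inequality together with the $A_1$-majorant property of the Rubio de Francia iterate gives $\|fv_0\|_{L^{r_0}}\lesssim\|f\|_{L^r(\omega^r)}$. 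Substituting these bounds into the Kolmogorov reduction then yields \eqref{20241206.2039}.

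The main obstacle will be the precise bookkeeping of exponents in the construction of $(u_0,v_0)$. The factorized $TA_{r_0/\tau,\infty}^+(\gamma)$ condition, which couples an essential supremum of $u_0^\tau$ on $R^-(\gamma)$ to an $L^{(r_0/\tau)'}$-average of $v_0^{-\tau}$ on $R^+(\gamma)$, forces us to iterate parabolic maximal operators in both time directions, matching the $+$ part of the condition to one iteration and the $-$ part to the other. Balancing the two seeds and their exponents so that all three requirements --- the $TA_{r_0/\tau,\infty}^+(\gamma)$ membership with uniform constant, the bound $\int_Eu_0^{-\tau}\omega^q\lesssim\omega^q(E)^{1-\tau/q}$, and the bound $\|fv_0\|_{L^{r_0}}\lesssim\|f\|_{L^r(\omega^r)}$ --- hold simultaneously, threading the off-diagonal relation $1/q=1/r-1/r_0$, is the delicate technical heart of the argument.
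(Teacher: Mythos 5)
Your high-level reduction is sound and is essentially an equivalent packaging of what the paper does: the paper bounds $(\omega^q)(\mathcal{K}\cap\{|T(f)|>\lambda\})$ via H\"older's inequality in weighted Lorentz spaces plus an absorption argument, which plays exactly the role of your Kolmogorov reduction. The real content of the theorem, however, is the construction of the pair $(u_0,v_0)$, and this is precisely the part you leave as the ``delicate technical heart''; moreover, the construction you sketch is not workable as described. First, you propose to iterate $M^{\gamma+}_{\tau/r_0}$, but this is an off-diagonal operator (bounded from $L^{r/\tau}(\omega^r)$ to $L^{q/\tau}(\omega^q)$), so its powers are not defined on a single space and no Rubio de Francia series can be summed. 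Second, making the pair depend on $E$ (seeding with $\mathbf{1}_E\omega^q/\omega^q(E)$) is both unnecessary and counterproductive: the bound $\int_E u_0^{-\tau}\omega^q\lesssim\omega^q(E)^{1-\tau/q}$ should follow uniformly in $E$ from a weak-type estimate $u_0^{-\tau}\in L^{q/\tau,\infty}(\omega^q)$ with controlled norm, and a $u_0$ built to be large on $E$ threatens the $\mathrm{ess\,sup}$ of $u_0^\tau$ on $R^-(\gamma)$ appearing in the $TA_{r_0/\tau,\infty}^+(\gamma)$ condition. Third, no iteration in either time direction is needed at all.

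The missing idea is an explicit, non-iterative construction. Take
$v_0:=|f|^{\frac{r}{r_0}-1}\omega^{\frac{r}{r_0}}\|f\|_{L^r(\omega^r)}^{\frac{r}{q}}$ on $\{|f|\in(0,\infty)\}$ (suitably extended off the support of $f$); the relation $\frac1q=\frac1r-\frac1{r_0}$ then gives the two identities $\|fv_0\|_{L^{r_0}}=\|f\|_{L^r(\omega^r)}$ and $\int_{\mathbb{R}^{n+1}}v_0^{-q}\omega^q\le 2$. Then set
$u_0:=[M^{\gamma+}(v_0^{-\tau(r_0/\tau)'})]^{-\frac{1}{\tau(r_0/\tau)'}}$,
for which $(u_0^\tau,v_0^\tau)\in TA_{r_0/\tau,\infty}^+(\gamma)$ is immediate from the definition of $M^{\gamma+}$ (the $\mathrm{ess\,inf}$ of a maximal function over $R^-(\gamma)$ dominates the average of its argument over $R^+(\gamma)$), with constant $1$. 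The decay of $u_0^{-1}$ is then exactly where the hypothesis on $\omega$ enters: $\omega^q\in A_{P}^+(\gamma)$ with $P:=1+\frac{q/\tau}{(r/\tau)'}$, so $M^{\gamma+}$ is of weak type $(P,P)$ with respect to $\omega^q$ by Lemma \ref{weighted inequality uncentered}(ii), and the exponent identity $\tau(\frac{r_0}{\tau})'P=q$ together with $\int v_0^{-q}\omega^q\le2$ yields $\|u_0^{-\tau}\|_{L^{q/\tau,\infty}(\omega^q)}\lesssim1$, which is what your bound $\int_Eu_0^{-\tau}\omega^q\lesssim\omega^q(E)^{1-\tau/q}$ requires. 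Without this (or an equivalent) construction and exponent bookkeeping, the proposal does not yet constitute a proof.
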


To show Theorem \ref{parabolic extrapolation infinity}, we
need to recall weighted Lorentz spaces. Let $0<r,q\leq\infty$
and $\omega$ be a weight. Recall that the \emph{weighted Lorentz
space $L^{r,q}(\omega)$} is defined to be the set of
all $f\in\mathscr{M}$ such that
\begin{align*}
\|f\|_{L^{r,q}(\omega)}:=
\begin{cases}\displaystyle
\left[\int_0^\infty\left\{t^{\frac{1}{r}}f_\omega^*(t)\right\}^q
\frac{dt}{t}\right]^\frac{1}{q}&\mbox{if\ }q\in(0,\infty),\\
\sup\limits_{t\in(0,\infty)}t^{\frac{1}{r}}f_\omega^*(t)
&\mbox{if\ }q=\infty
\end{cases}
\end{align*}
is finite, where $f_\omega^*$ is the \emph{weighted decreasing
rearrangement} of $f$ defined by setting, for any
$t\in(0,\infty)$,
\begin{align*}
f_\omega^*(t):=\inf\left\{s\in(0,\infty):\ \omega(\{|f|>s\})
\leq t\right\}
\end{align*}
with the convention that $\inf\emptyset=\infty$. By borrowing
some ideas from the proof of \cite[Theorem 1.4]{mr(prse-2000)},
we are ready to prove Theorem
\ref{parabolic extrapolation infinity}.

\begin{proof}[Proof of Theorem \ref{parabolic extrapolation infinity}]
Let $r\in(\tau,r_0)$, $q\in(r,\infty)$ satisfy
$\frac{1}{q}=\frac{1}{r}-\frac{1}{r_0}$, $\omega$ be a
nonnegative function on $\mathbb{R}^{n+1}$ with
$\omega^\tau\in A_{\frac{r}{\tau},\frac{q}{\tau}}^+(\gamma)$,
and $f\in C_\mathrm{c}$. We first construct a pair  $(u,v)$ of
nonnegative functions on $\mathbb{R}^{n+1}$ such that
$(u^\tau,v^\tau)\in TA_{\frac{r_0}{\tau},\infty}^+(\gamma)$.
To do this, for any $(x,t)\in\mathbb{R}^{n+1}$, let
\begin{align*}
v(x,t):=
\begin{cases}
|f(x,t)|^{\frac{r}{r_0}-1}\left[\omega(x,t)\right]^\frac{r}{r_0}
\|f\|_{L^r(\omega^r)}^{\frac{r}{q}}&\mbox{if\ }|f(x,t)|\in
(0,\infty),\\
e^{\frac{\pi(|x|^2+t^2)}{q}}\omega(x,t)&\mbox{if\ }|f(x,t)|=0.
\end{cases}
\end{align*}
Using the definition of $v$, we find that
$\|f\|_{L^r(\omega^r)}=\|fv\|_{L^{r_0}}$ and
\begin{align}\label{20241205.1743}
\int_{\mathbb{R}^{n+1}}v^{-q}\omega^{q}\leq2.
\end{align}
Let
$u:=[M^{\gamma+}(v^{-\tau(\frac{r_0}{\tau})'})
]^{-\frac{1}{\tau(\frac{r_0}{\tau})'}}$.
We can easily verify that $(u^\tau,v^\tau)\in
TA_{\frac{r_0}{\tau},\infty}^+(\gamma)$ by Definition
\ref{parabolic maximal operator}. From H\"older's inequality
for weighted Lorentz spaces (see, for example,
\cite[(6.2.61)]{Mitrea-bookI}), we deduce that, for any
given $\lambda\in(0,\infty)$ and any compact set
$\mathcal{K}\subset\mathbb{R}^{n+1}$,
\begin{align}\label{20241206.1949}
(\omega^q)(\mathcal{K}\cap\{|T(f)|>\lambda\})
&=\int_{\mathbb{R}^{n+1}}
\boldsymbol{1}_{\mathcal{K}\cap\{|T(f)|>\lambda\}}
u^{-1}u\omega^q\\
&\leq\left\|\boldsymbol{1}_{\mathcal{K}\cap\{|T(f)|>\lambda\}}
\right\|_{L^{\frac{1}{q}+1,1}(u\omega^q)}
\left\|u^{-1}\right\|_{L^{q+1,\infty}
(u\omega^q)}\notag\\
&=:\mathrm{I}\times\mathrm{II}.\notag
\end{align}
We first estimate $\mathrm{I}$. By a simple computation, we obtain
\begin{align*}
\left(\boldsymbol{1}_{\{|T(f)|>\lambda\}}\right)_{u\omega^q}^*=
\boldsymbol{1}_{[0,\Lambda)},
\end{align*}
where $\Lambda:=(u\omega^q)(\mathcal{K}\cap\{|T(f)|>\lambda\})$.
This, together with Chebyshev's inequality,
\eqref{20241206.1932}, and $(u^\tau,v^\tau)\in
TA_{\frac{r_0}{\tau},\infty}^+(\gamma)$, further implies that
\begin{align}\label{20241206.1950}
\mathrm{I}&=\int_0^\infty t^{\frac{q}{q+1}}
\boldsymbol{1}_{[0,\Lambda)}(t)\frac{dt}{t}\sim
\left[\int_{\mathcal{K}\cap\{|T(f)|>\lambda\}}u\omega^q\right]
^\frac{q}{q+1}\\
&\leq\left[\frac{1}{\lambda}
\int_{\mathcal{K}\cap\{|T(f)|>\lambda\}}
|T(f)|u\omega^q\right]^\frac{q}{q+1}\notag\\
&\leq\left[\frac{1}{\lambda}\|T(f)u\|
_{L^\infty}(\omega^q)(\mathcal{K}\cap\{|T(f)|>\lambda\})
\right]^\frac{q}{q+1}\notag\\
&\lesssim\left[\frac{1}{\lambda}\|fv\|
_{L^{r_0}}(\omega^q)(\mathcal{K}\cap\{|T(f)|>\lambda\})
\right]^\frac{q}{q+1}.\notag
\end{align}

Next, we estimate $\mathrm{II}$. From $\omega^\tau\in
A_{\frac{r}{\tau},\frac{q}{\tau}}^+(\gamma)$, Definition
\ref{parabolic weight}, and Remark
\ref{parabolic weight remark}, it follows that $\omega^q\in
A_{1+\frac{q/\tau}{(r/\tau)'}}^+(\gamma)$ and $[\omega^q]
_{A_{1+\frac{q/\tau}{(r/\tau)'}}^+(\gamma)}=[\omega^\tau]
_{A_{\frac{r}{\tau},\frac{q}{\tau}}^+(\gamma)}^\frac{q}{\tau}$,
which, together with Lemma
\ref{weighted inequality uncentered}(ii), further implies that
$M^{\gamma+}$ is bounded from $L^{1+\frac{q/\tau}{(r/\tau)'}}
(\omega^q)$ to $L^{1+\frac{q/\tau}{(r/\tau)'},\infty}(\omega^q)$.
Combining this, the definition of $u$, and
\eqref{20241205.1743}, we find that, for any $s\in(0,\infty)$,
\begin{align*}
s^{q+1}(u\omega^q)\left(\left\{u^{-1}>s\right\}\right)&=s^{q+1}
\int_{\{u^{-1}>s\}}u\omega^q\leq s^q\int_{\{u^{-1}>s\}}\omega^q\\
&=s^q(\omega^q)\left(\left\{M^{\gamma+}
\left(v^{-\tau(\frac{r_0}{\tau})'}\right)>
s^{\tau(\frac{r_0}{\tau})'}\right\}\right)\\
&\lesssim\frac{s^q}{s^{\tau(\frac{r_0}{\tau})'
[1+\frac{q/\tau}{(r/\tau)'}]}}\int_{\mathbb{R}^{n+1}}
v^{-\tau(\frac{r_0}{\tau})'[1+\frac{q/\tau}{(r/\tau)'}]}\omega^q\\
&=\int_{\mathbb{R}^{n+1}}v^{-q}\omega^q\leq2.
\end{align*}
Therefore, for any $t\in(0,\infty)$,
\begin{align*}
\left(u^{-1}\right)_{u\omega^q}^*(t)&=\inf\left\{s\in(0,\infty):\
(u\omega^q)\left(\left\{u^{-1}>s\right\}\right)\leq t\right\}\\
&\lesssim\inf\left\{s\in(0,\infty):\
\frac{1}{s^{q+1}}\leq t\right\}=\frac{1}{t^{\frac{1}{q+1}}}
\end{align*}
and hence
\begin{align*}
\mathrm{II}=\sup\limits_{t\in(0,\infty)}t^\frac{1}{q+1}
\left(u^{-1}\right)_{u\omega^q}^*(t)\lesssim1.
\end{align*}
This, together with \eqref{20241206.1949},
\eqref{20241206.1950}, and
$\|f\|_{L^r(\omega^r)}=
\|fv\|_{L^{r_0}(\mathbb{R}^{n+1})}$, further implies that
\begin{align*}
(\omega^q)(\mathcal{K}\cap\{|T(f)|>\lambda\})\lesssim
\frac{1}{\lambda^{\frac{q}{q+1}}}\|f\|
_{L^r(\omega^r)}^{\frac{q}{q+1}}
\left[(\omega^q)(\mathcal{K}\cap\{|T(f)|>\lambda\})\right]
^\frac{q}{q+1}.
\end{align*}
Observe that $(\omega^q)(\mathcal{K}\cap\{|T(f)|>\lambda\})
<\infty$. Consequently,
\begin{align*}
(\omega^q)(\mathcal{K}\cap\{|T(f)|>\lambda\})\lesssim
\frac{1}{\lambda^q}\|f\|_{L^r(\omega^r)}^q.
\end{align*}
Taking the suprema over all $\mathcal{K}\subset\mathbb{R}^{n+1}$
and then all $\lambda\in(0,\infty)$, we obtain
\eqref{20241206.2039}, which completes the proof of Theorem
\ref{parabolic extrapolation infinity}.
\end{proof}

\subsection{Parabolic Extrapolation for Commutators of Linear Operators}
\label{subsection2.3}

Let $T$ be a linear operator defined on a linear subspace $D(T)$
of $L_{\mathrm{loc}}^1$ and taking values in
$\mathscr{M}$ and let $b\in L_{\mathrm{loc}}^1$. Recall that the
\emph{(first order) commutator $[b,T]$ of $T$ with $b$} is
defined by setting, for any suitable function $f$ on
$\mathbb{R}^{n+1}$ and for any $(x,t)\in\mathbb{R}^{n+1}$,
\begin{align}\label{20250304.2137}
[b,T](f)(x,t):=[b,T]_1(f)(x,t):=b(x,t)T(f)(x,t)-T(bf)(x,t).
\end{align}
For any $k\in\mathbb{N}\cap[2,\infty)$,
the \emph{$k$th-order commutator $[b,T]_k$ of $T$ with $b$} is
defined by the recursive formula $[b,T]_k:=[b,[b,T]_{k-1}]$.
We can easily verify that, for any given $k\in\mathbb{N}$ and
for any suitable $f$ on
$\mathbb{R}^{n+1}$ and any $(x,t)\in\mathbb{R}^{n+1}$,
\begin{align}\label{20250304.2138}
[b,T]_k(f)(x,t)=T\left([b(x,t)-b(\cdot)]^kf(\cdot)\right)(x,t).
\end{align}
We then have the following
parabolic extrapolation for commutators of linear operators.

\begin{theorem}\label{parabolic extrapolation for commutator}
Let $\gamma\in(0,1)$, $1<r_0\leq q_0<\infty$, and $T:\
L_{\mathrm{loc}}^1\to\mathscr{M}$ be a linear operator. If, for
any $\omega_0\in A_{r_0,q_0}^+(\gamma)$, $T$ is bounded from
$L^{r_0}(\omega_0^{r_0})$ to $L^{q_0}(\omega_0^{q_0})$ with its
operator norm depending only on $n$, $p$, $\gamma$, $r_0$,
$q_0$, and $[\omega_0]_{A_{r_0,q_0}^+(\gamma)}$, then, for
any $k\in\mathbb{N}$, $1<r\leq q<\infty$ with $\frac{1}{r}-
\frac{1}{q}=\frac{1}{r_0}-\frac{1}{q_0}$, $\omega\in
A_{r,q}^+(\gamma)$, and $b\in\mathrm{PBMO}$, $[b,T]_k$ is
bounded from $L^r(\omega^r)$ to $L^q(\omega^q)$. Moreover, for
any $f\in L^r(\omega^r)$,
\begin{align*}
\|[b,T]_k(f)\|_{L^q(\omega^q)}\lesssim\|b\|
_{\mathrm{PBMO}}^k\|f\|_{L^r(\omega^r)},
\end{align*}
where the implicit positive constant is independent of $f$ and $b$.
\end{theorem}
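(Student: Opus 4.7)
The plan is to combine the parabolic extrapolation theorem (Theorem~\ref{parabolic extrapolation}) with the Cauchy integral trick of Coifman--Rochberg--Weiss as adapted in \cite{abkp(sm-1993)}. The underlying observation is that, for a real-valued $b$ and $z\in\mathbb{C}$, the conjugated family $T_z(f):=e^{zb}\,T(e^{-zb}f)$ is holomorphic in $z$ and
\begin{align*}
\frac{d^{k}}{dz^{k}}T_z(f)\bigg|_{z=0}=[b,T]_k(f),
\end{align*}
so that, by Cauchy's formula, for any admissible radius $\varepsilon\in(0,\infty)$,
\begin{align*}
[b,T]_k(f)(x,t)=\frac{k!}{2\pi i}\oint_{|z|=\varepsilon}
\frac{T_z(f)(x,t)}{z^{k+1}}\,dz,
\end{align*}
and hence, by Minkowski's integral inequality,
$\|[b,T]_k(f)\|_{L^q(\omega^q)}\le k!\,\varepsilon^{-k}\sup_{|z|=\varepsilon}\|T_z(f)\|_{L^q(\omega^q)}$.

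First, starting from the hypothesis that $T$ is bounded from $L^{r_0}(\omega_0^{r_0})$ to $L^{q_0}(\omega_0^{q_0})$ for every $\omega_0\in A_{r_0,q_0}^+(\gamma)$, I would apply Theorem~\ref{parabolic extrapolation} to the family $\mathcal{F}=\{(T(f),f):\ f\in L_{\mathrm{loc}}^{1}\}$ to obtain, for each pair $(r,q)$ with $1<r\le q<\infty$ and $\frac{1}{r}-\frac{1}{q}=\frac{1}{r_{0}}-\frac{1}{q_{0}}$, the boundedness of $T$ from $L^{r}(W^{r})$ to $L^{q}(W^{q})$ for every $W\in A_{r,q}^{+}(\gamma)$, with quantitative control of the operator norm by $[W]_{A_{r,q}^{+}(\gamma)}$. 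This reduces the commutator estimate to controlling $\|T_{z}(f)\|_{L^{q}(\omega^{q})}$ uniformly on a small circle $|z|=\varepsilon$.

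Writing $z=s+it$ with $|s|\le\varepsilon$, for any $(x,t)\in\mathbb{R}^{n+1}$ one has $|e^{zb(x,t)}|=e^{sb(x,t)}$, so
\begin{align*}
\|T_z(f)\|_{L^q(\omega^q)}
=\|T(e^{-zb}f)\|_{L^q((e^{sb}\omega)^q)}.
\end{align*}
The crucial stability step is then to show that, whenever $\omega\in A_{r,q}^{+}(\gamma)$ and $b\in\mathrm{PBMO}$, there exists $\varepsilon_{0}>0$, depending only on $n$, $p$, $\gamma$, $r$, $q$, $[\omega]_{A_{r,q}^{+}(\gamma)}$, and $\|b\|_{\mathrm{PBMO}}$, such that $e^{sb}\omega\in A_{r,q}^{+}(\gamma)$ with $[e^{sb}\omega]_{A_{r,q}^{+}(\gamma)}\lesssim[\omega]_{A_{r,q}^{+}(\gamma)}$ for all $|s|\le\varepsilon_{0}$. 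This is a consequence of the parabolic John--Nirenberg inequality for $\mathrm{PBMO}$ on parabolic rectangles (available in the references cited in the excerpt), which yields uniform control of the quantities $\fint_{R^{-}(\gamma)}e^{sbq}$ and $\fint_{R^{+}(\gamma)}e^{-sbr'}$ by the corresponding averages of $\omega^{q}$ and $\omega^{-r'}$ once $\varepsilon_{0}\|b\|_{\mathrm{PBMO}}$ is sufficiently small. Combining this with the extrapolated bound and $|e^{-zb}f|=e^{-sb}|f|$, one gets
\begin{align*}
\|T_z(f)\|_{L^q(\omega^q)}\lesssim
\|e^{-sb}f\|_{L^r((e^{sb}\omega)^r)}=\|f\|_{L^r(\omega^r)},
\end{align*}
uniformly for $|z|\le\varepsilon_{0}$.

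Plugging this uniform bound into the Cauchy integral representation gives $\|[b,T]_k(f)\|_{L^q(\omega^q)}\lesssim\varepsilon^{-k}\|f\|_{L^r(\omega^r)}$ for every $\varepsilon\le\varepsilon_{0}$; choosing $\varepsilon\sim\min\{\varepsilon_{0},1/\|b\|_{\mathrm{PBMO}}\}$ (and treating the trivial case $b\equiv 0$ separately) produces the desired factor $\|b\|_{\mathrm{PBMO}}^{k}$. The main obstacle is the stability step for $e^{sb}\omega$: one must verify the parabolic John--Nirenberg estimate uniformly over both the $\gamma$-upper and $\gamma$-lower halves of every parabolic rectangle, which is the point where the time-lag geometry intervenes nontrivially; everything else is a bookkeeping combination of the Cauchy trick, Minkowski's inequality, and Theorem~\ref{parabolic extrapolation}.
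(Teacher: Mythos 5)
Your proposal follows essentially the same route as the paper: first extrapolate the hypothesis via Theorem~\ref{parabolic extrapolation} to get weighted boundedness of $T$ at the target exponents, then run the Cauchy integral trick of \cite{abkp(sm-1993)} on the conjugated family $e^{zb}T(e^{-zb}f)$, the whole matter reducing to the stability of $A_{r,q}^{+}(\gamma)$ under the perturbation $\omega\mapsto e^{\Re(z)b}\omega$ for $|z|$ small. The only under-specified point is that this stability requires not just the John--Nirenberg exponential integrability of $\mathrm{PBMO}$ but also the openness of the weight class (namely $\omega^{\epsilon}\in A_{r,q}^{+}(\gamma)$ for some $\epsilon>1$, the paper's Lemma~\ref{improving weight exponent}, proved via a parabolic reverse H\"older inequality), which is what allows H\"older's inequality to separate $\omega$ from the exponential factor on each half $R^{\pm}(\gamma)$; the paper also devotes additional care to justifying the vector-valued Cauchy formula rigorously, via continuity of $z\mapsto e^{zb}T(e^{-zb}f)$ in $L^{q}(\omega^{q})$ and truncated exponential series with Bochner integrals, where you assert holomorphy without proof.
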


In order to show Theorem
\ref{parabolic extrapolation for commutator}, we need
the following several lemmas.

\begin{lemma}\label{improving weight exponent}
Let $\gamma\in(0,1)$, $1<r\leq q<\infty$, and $\omega\in
A_{r,q}^+(\gamma)$. Then there exists $\epsilon_0\in(1,\infty)$,
depending only on $n$, $p$, $\gamma$, $r$, $q$, and
$[\omega]_{A_{r,q}^+(\gamma)}$, such that, for any
$\epsilon\in[1,\epsilon_0]$, $\omega^\epsilon\in
A_{r,q}^+(\gamma)$.
\end{lemma}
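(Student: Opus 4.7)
The plan is to deduce this self-improvement property from two applications of a parabolic reverse Hölder inequality with time lag, one on each side of the parabolic rectangle. The structure of the condition $A_{r,q}^+(\gamma)$ naturally factors into an average of $\omega^q$ on $R^-(\gamma)$ and an average of $\omega^{-r'}$ on $R^+(\gamma)$, and my strategy is to slightly enlarge each exponent using reverse Hölder and then recombine.

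First, I would use Definition \ref{parabolic weight} together with Remark \ref{parabolic weight remark} to translate the hypothesis $\omega\in A_{r,q}^+(\gamma)$ into the two facts $\omega^q\in A_{1+q/r'}^+(\gamma)\subset A_\infty^+(\gamma)$ and $\omega^{-r'}\in A_{1+r'/q}^-(\gamma)\subset A_\infty^-(\gamma)$, with both Muckenhoupt constants controlled quantitatively by $[\omega]_{A_{r,q}^+(\gamma)}$. Invoking the parabolic reverse Hölder inequality for $A_\infty^\pm(\gamma)$ weights from Kinnunen--Saari \cite{ks(apde-2016)} then produces $\delta_1,\delta_2\in(0,\infty)$ and a constant $C\in(0,\infty)$, all depending only on $n$, $p$, $\gamma$, $r$, $q$, and $[\omega]_{A_{r,q}^+(\gamma)}$, such that for every $R\in\mathcal{R}_p^{n+1}$,
\begin{align*}
\left[\fint_{R^-(\gamma)}\omega^{q(1+\delta_1)}\right]^{\frac{1}{1+\delta_1}}\leq C\fint_{R^-(\gamma)}\omega^q\quad\text{and}\quad\left[\fint_{R^+(\gamma)}\omega^{-r'(1+\delta_2)}\right]^{\frac{1}{1+\delta_2}}\leq C\fint_{R^+(\gamma)}\omega^{-r'}.
\end{align*}

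Setting $\epsilon_0:=1+\min\{\delta_1,\delta_2\}$, I fix $\epsilon\in[1,\epsilon_0]$. Since $(1+\delta_i)/\epsilon\ge 1$, Jensen's inequality applied to $t\mapsto t^{(1+\delta_i)/\epsilon}$ lets me lower the exponent from $1+\delta_i$ to $\epsilon$ and obtain $[\fint_{R^-(\gamma)}\omega^{q\epsilon}]^{1/\epsilon}\le C\fint_{R^-(\gamma)}\omega^q$ and the analogue on $R^+(\gamma)$ for $\omega^{-r'\epsilon}$. Raising these to the $\epsilon/q$ and $\epsilon/r'$ powers, respectively, and multiplying yields
\begin{align*}
\left[\fint_{R^-(\gamma)}\omega^{q\epsilon}\right]^{\frac{1}{q}}\left[\fint_{R^+(\gamma)}\omega^{-r'\epsilon}\right]^{\frac{1}{r'}}\leq C^{\epsilon(\frac{1}{q}+\frac{1}{r'})}\left\{\left[\fint_{R^-(\gamma)}\omega^q\right]^{\frac{1}{q}}\left[\fint_{R^+(\gamma)}\omega^{-r'}\right]^{\frac{1}{r'}}\right\}^{\epsilon}\leq C'\left([\omega]_{A_{r,q}^+(\gamma)}\right)^\epsilon,
\end{align*}
and taking the supremum over $R\in\mathcal{R}_p^{n+1}$ gives $\omega^\epsilon\in A_{r,q}^+(\gamma)$ with the claimed quantitative dependence.

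The main obstacle is confirming that a sufficiently quantitative reverse Hölder inequality is indeed available on both $R^-(\gamma)$ for $A_\infty^+(\gamma)$ and $R^+(\gamma)$ for $A_\infty^-(\gamma)$, with the exponent gains $\delta_i$ and the constant $C$ depending on the weight only through $[\omega]_{A_\infty^\pm(\gamma)}$ and hence ultimately through $[\omega]_{A_{r,q}^+(\gamma)}$. Fortunately, this is exactly the content of the parabolic reverse Hölder results already established in the Kinnunen--Saari framework; once that tool is invoked, the rest of the argument is a straightforward Jensen--Hölder manipulation.
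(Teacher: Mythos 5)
There is a genuine gap at the heart of your argument: the ``same-set'' reverse H\"older inequality you invoke, namely
\begin{align*}
\left[\fint_{R^-(\gamma)}\omega^{q(1+\delta_1)}\right]^{\frac{1}{1+\delta_1}}\leq C\fint_{R^-(\gamma)}\omega^q ,
\end{align*}
is in general \emph{false} for parabolic (one-sided) Muckenhoupt weights. One-sided weights need not be doubling and can grow exponentially in the time direction; already in one dimension, $\omega(x)=e^{x}$ belongs to $A_1^+$ but fails the same-interval reverse H\"older inequality uniformly over all intervals (the ratio of the two averages blows up like a positive power of the interval length). The reverse H\"older inequality that is actually available in the parabolic framework (\cite[Corollary 5.3]{km(am-2024)}, which is what the paper uses) is \emph{time-shifted}: the higher-integrability average over $R^-(\gamma)$ is controlled by the plain average over the translated set $R^-(\gamma)+(\mathbf{0},(1-\gamma)L^p)$, and dually for $R^+(\gamma)$ with a backward shift. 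Your closing paragraph identifies this as the ``main obstacle'' but then asserts the same-set version is available in the Kinnunen--Saari framework; it is not, and this is precisely where the proof breaks.

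The repair requires two additional ingredients, both present in the paper's proof. First, after applying the shifted reverse H\"older inequality, the translated sets must be recombined into a single $A$-type product; this works because $R^-(\gamma)+(\mathbf{0},(1-\gamma)L^p)\subset R^-(2\gamma-1)$ and $R^+(\gamma)-(\mathbf{0},(1-\gamma)L^p)\subset R^+(2\gamma-1)$ with equivalent measures, so the product of the shifted averages is dominated by $[\omega]_{A_{r,q}^+(2\gamma-1)}$ rather than by $[\omega]_{A_{r,q}^+(\gamma)}$. Second, one needs the time-lag independence of the classes (\cite[Theorem 3.1]{km(am-2024)}) both to reduce at the outset to $\gamma\in(\frac12,1)$ (so that $2\gamma-1\in(0,1)$) and to know that $[\omega]_{A_{r,q}^+(2\gamma-1)}$ is finite and controlled by $[\omega]_{A_{r,q}^+(\gamma)}$. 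Your Jensen/H\"older recombination step is fine once these pieces are in place, but as written the argument rests on an inequality that the class $A_\infty^{\pm}(\gamma)$ does not satisfy.
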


\begin{proof}
Using Definition \ref{parabolic weight} and Remark
\ref{parabolic weight remark}, we find that $\omega^q\in
A_{1+\frac{q}{r'}}^+(\gamma)$ and $\omega^{-r'}\in
A_{1+\frac{r'}{q}}^-(\gamma)$, which, together with
\cite[Theorem 3.1]{km(am-2024)}, further implies that
$\omega^q\in A_{1+\frac{q}{r'}}^+(\alpha)$ and $\omega^{-r'}\in
A_{1+\frac{r'}{q}}^-(\alpha)$ for any $\alpha\in(0,1)$ with
\begin{align*}
\left[\omega^q\right]_{A_{1+\frac{q}{r'}}^+(\gamma)}
\sim\left[\omega^q\right]_{A_{1+\frac{q}{r'}}^+(\alpha)}
\mbox{\ \ and\ \ }\left[\omega^{-r'}\right]
_{A_{1+\frac{r'}{q}}^-(\gamma)}\sim\left[\omega^{-r'}\right]
_{A_{1+\frac{r'}{q}}^-(\alpha)}.
\end{align*}
Thus, we may assume that $\gamma\in(\frac{1}{2},1)$ without loss
of generality. From \cite[Corollary 5.3]{km(am-2024)},
we infer that there exists $\epsilon_0\in(1,\infty)$ such that,
for any $\epsilon\in(1,\epsilon_0]$ and
$R:=R(x,t,L)\in\mathcal{R}_p^{n+1}$ with
$(x,t)\in\mathbb{R}^{n+1}$ and $L\in(0,\infty)$,
\begin{align*}
\left[\fint_{R^-(\gamma)}\omega^{\epsilon q}\right]
^\frac{1}{\epsilon}\lesssim\fint_{R^-(\gamma)
+(\mathbf{0},(1-\gamma)L^p)}\omega^q
\end{align*}
and
\begin{align*}
\left[\fint_{R^+(\gamma)}\omega^{-\epsilon r'}\right]
^\frac{1}{\epsilon}\lesssim\fint_{R^+(\gamma)
-(\mathbf{0},(1-\gamma)L^p)}\omega^{-r'},
\end{align*}
where the implicit positive constants are independent of
$R$. Applying this, both
\begin{align*}
R^-(\gamma)+(\mathbf{0},(1-\gamma)L^p)
\subset R^-(2\gamma-1)\ \mathrm{with\ equivalent\ measures}
\end{align*}
and $R^+(\gamma)-(\mathbf{0},
(1-\gamma)L^p)\subset R^+(2\gamma-1)$ with equivalent measures,
and \cite[Theorem 3.1]{km(am-2024)}, we conclude that
\begin{align*}
&\fint_{R^-(\gamma)}\omega^{\epsilon q}\left[\fint_{R^+(\gamma)}
\omega^{-\epsilon r'}\right]^{\frac{1}{r'}}\\
&\quad\lesssim\left\{\fint_{R^-(\gamma)+(\mathbf{0},(1-\gamma)L^p)}
\omega^q\left[\fint_{R^+(\gamma)-(\mathbf{0},(1-\gamma)L^p)}
\omega^{-r'}\right]^\frac{1}{r'}\right\}^\epsilon\\
&\quad\lesssim\left\{\fint_{R^-(2\gamma-1)}\omega^q\left[
\fint_{R^+(2\gamma-1)}\omega^{-r'}\right]^\frac{1}{r'}
\right\}^\epsilon
\leq[\omega]_{A_{r,q}^+(2\gamma-1)}^\epsilon<\infty.
\end{align*}
Taking the supremum over all $R\in\mathcal{R}_p^{n+1}$, we
obtain $\omega^\epsilon\in A_{r,q}^+(\gamma)$, which
completes the proof of Lemma \ref{improving weight exponent}.
\end{proof}

To prove the following lemma, we borrow
some ideas from \cite{abkp(sm-1993), bmmst(ma-2020)}. Recall
that, for any $z\in\mathbb{C}$, let $\Re(z)$ denote its real part.

\begin{lemma}\label{analytic}
Let $\gamma\in(0,1)$, $1<r\leq q<\infty$, and $T:\
L_{\mathrm{loc}}^1\to\mathscr{M}$ be a linear operator. If, for
any $\omega\in A_{r,q}^+(\gamma)$, $T$ is bounded from
$L^r(\omega^r)$ to $L^q(\omega^q)$ with its operator norm
depending only on $n$, $p$, $\gamma$, $r$, $q$, and
$[\omega]_{A_{r,q}^+(\gamma)}$, then, for any $\omega\in
A_{r,q}^+(\gamma)$ and $b\in\mathrm{PBMO}$, there exists
$\eta\in(0,\infty)$ such that, for any $f\in C_\mathrm{c}$, the map
\begin{align*}
\Phi_z(f):\ \begin{cases}
\{z\in\mathbb{C}:\ |z|\in[0,\eta)\}\to L^q(\omega^q),\\
z\mapsto e^{zb}T(e^{-zb}f)
\end{cases}
\end{align*}
is continuous.
\end{lemma}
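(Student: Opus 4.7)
The plan is to prove continuity by a dominated-convergence argument on a splitting of $\Phi_z(f)-\Phi_{z_0}(f)$, after a preparatory step giving uniform weighted $L^r$-to-$L^q$ bounds for $T$ under exponential perturbations of $\omega$. Specifically, invoking a parabolic John--Nirenberg inequality for $\mathrm{PBMO}$ on the asymmetric parts $R^\pm(\gamma)$ of parabolic rectangles, together with Lemma \ref{improving weight exponent} and Remark \ref{parabolic weight remark}, I would first establish that there exists $\eta>0$, depending only on $n$, $p$, $\gamma$, $r$, $q$, $[\omega]_{A_{r,q}^+(\gamma)}$, and $\|b\|_{\mathrm{PBMO}}$, such that, for every $t\in(-\eta,\eta)$, both $e^{tb}\omega$ and $e^{t|b|}\omega$ lie in $A_{r,q}^+(\gamma)$ with uniform bounds on their constants (using that $|b|\in\mathrm{PBMO}$ with $\||b|\|_{\mathrm{PBMO}}\le 2\|b\|_{\mathrm{PBMO}}$). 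By the hypothesis of the lemma, $T$ is then bounded from $L^r((e^{tb}\omega)^r)$ to $L^q((e^{tb}\omega)^q)$, and from $L^r((e^{t|b|}\omega)^r)$ to $L^q((e^{t|b|}\omega)^q)$, uniformly in $|t|<\eta$.

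Fixing $z_0$ with $|z_0|<\eta$, I then split
\begin{align*}
\Phi_z(f)-\Phi_{z_0}(f)=e^{zb}T\bigl((e^{-zb}-e^{-z_0b})f\bigr)+(e^{zb}-e^{z_0b})T(e^{-z_0b}f)=:\mathrm{I}_z+\mathrm{II}_z.
\end{align*}
For $\mathrm{I}_z$, using $|e^{zb}|=e^{\Re(z)b}$, I rewrite $\|\mathrm{I}_z\|_{L^q(\omega^q)}=\|T((e^{-zb}-e^{-z_0b})f)\|_{L^q((e^{\Re(z)b}\omega)^q)}$ and apply the weighted bound with weight $e^{\Re(z)b}\omega$ to obtain
\begin{align*}
\|\mathrm{I}_z\|_{L^q(\omega^q)}\lesssim\|(e^{-zb}-e^{-z_0b})e^{\Re(z)b}f\|_{L^r(\omega^r)}.
\end{align*}
The integrand on the right tends to $0$ pointwise as $z\to z_0$ and is dominated by $2e^{2\eta|b|}|f|$, which lies in $L^r(\omega^r)$ whenever $f\in C_\mathrm{c}$ (using John--Nirenberg on $\mathrm{supp}(f)$ combined with the reverse H\"older property of $\omega$, after shrinking $\eta$ if necessary). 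Dominated convergence then yields $\|\mathrm{I}_z\|_{L^q(\omega^q)}\to 0$.

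For $\mathrm{II}_z$, the pointwise bound $|e^{zb}-e^{z_0b}|\le 2e^{\eta|b|}$ and the pointwise convergence $e^{zb}\to e^{z_0b}$ permit another dominated-convergence argument, provided $e^{\eta|b|}T(e^{-z_0b}f)\in L^q(\omega^q)$; this last fact follows from the preparatory step applied with weight $e^{\eta|b|}\omega$, since
\begin{align*}
\|T(e^{-z_0b}f)\|_{L^q((e^{\eta|b|}\omega)^q)}\lesssim\|e^{-z_0b}f\|_{L^r((e^{\eta|b|}\omega)^r)}\le\|e^{2\eta|b|}f\|_{L^r(\omega^r)}<\infty.
\end{align*}
The main obstacle is the preparatory step: quantitatively verifying that $e^{tb}\omega$ and $e^{t|b|}\omega$ remain in $A_{r,q}^+(\gamma)$ with uniform constants for $|t|$ small. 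In the Euclidean setting this is a classical consequence of John--Nirenberg and the openness of the $A_p$-class; in the parabolic setting with time lag it requires care with the asymmetric averages on $R^\pm(\gamma)$, but Lemma \ref{improving weight exponent} supplies the needed self-improvement. Once the uniform weighted bounds are in hand, the remainder of the proof is the dominated-convergence argument sketched above.
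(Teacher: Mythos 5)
Your proposal is correct and follows essentially the same route as the paper: the same preparatory step (uniform $A_{r,q}^+(\gamma)$ bounds for $\omega e^{\Re(z)b}$ and $\omega e^{\Re(z)|b|}$ via parabolic John--Nirenberg exponential integrability, H\"older, and Lemma \ref{improving weight exponent}), the same splitting of $\Phi_z(f)-\Phi_{z_0}(f)$ into the two terms, and the same dominated-convergence arguments with the dominating functions $2\|f\|_{L^\infty}\omega e^{2\eta|b|}\boldsymbol{1}_{\mathrm{supp}(f)}\in L^r$ and $2|T(e^{-z_0b}f)|\omega e^{\eta|b|}\in L^q$. The only cosmetic difference is that the paper runs the John--Nirenberg estimate over full rectangles $R$ and transfers it to $R^{\pm}(\gamma)$ by comparability of measures, rather than working on the asymmetric parts directly.
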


\begin{proof}
Let $\omega\in A_{r,q}^+(\gamma)$, $b\in\mathrm{PBMO}$, and
$f\in C_\mathrm{c}$. We claim that there exists a positive
constant $C_1$ such that
\begin{align}\label{20250106.1117}
\sup_{\{z\in\mathbb{C}:\ |z|\leq C_1\}}
\left[\omega e^{\Re(z)b}\right]_{A_{r,q}^+(\gamma)}<\infty.
\end{align}
Indeed, if $b$ is a constant function, then
\eqref{20250106.1117} holds automatically for any
$C_1\in(0,\infty)$. Hence we only need to
assume that $b$ is not a constant function or, equivalently,
$\|b\|_{\mathrm{PBMO}}\in(0,\infty)$. In this case, by an
argument similar to that used in the proof of \cite[Corollary
6.13]{duo-book}, we find that there exists $B\in(0,\infty)$,
depending only on $n$ and $p$, such that, for any
$\delta\in(0,\frac{B}{\|b\|_{\mathrm{PBMO}}}]$,
\begin{align}\label{20250105.1711}
\sup_{R\in\mathcal{R}_p^{n+1}}\fint_Re^{\delta|b-b_R|}<\infty.
\end{align}
In addition, applying Lemma \ref{improving weight exponent},
we conclude that there exists $\epsilon\in(1,\infty)$, depending
only on $n$, $p$, $\gamma$, $r$, $q$, and
$[\omega]_{A_{r,q}^+(\gamma)}$, such that $\omega^\epsilon\in
A_{r,q}^+(\gamma)$. Let $C_1:=\frac{B}{\epsilon'(q+r')
\|b\|_{\mathrm{PBMO}}}$ and $z\in\mathbb{C}$ satisfy
$|z|\leq C_1$. From  H\"older's inequality, $\omega^\epsilon\in
A_{r,q}^+(\gamma)$, and \eqref{20250105.1711}, it follows that,
for any $R\in\mathcal{R}_p^{n+1}$,
\begin{align*}
&\left[\fint_{R^-(\gamma)}\omega^qe^{q\Re(z)b}\right]^\frac{1}{q}
\left[\fint_{R^+(\gamma)}\omega^{-r'}e^{-r'\Re(z)b}\right]
^\frac{1}{r'}\\
&\quad\leq\left[\fint_{R^-(\gamma)}\omega^{\epsilon q}\right]
^{\frac{1}{\epsilon q}}\left[\fint_{R^-(\gamma)}
e^{\epsilon'q\Re(z)(b-b_R)}\right]^\frac{1}{\epsilon'q}\\
&\quad\quad\times
\left[\fint_{R^+(\gamma)}\omega^{-\epsilon r'}\right]
^{\frac{1}{\epsilon r'}}\left[\fint_{R^+(\gamma)}
e^{-\epsilon'r'\Re(z)(b-b_R)}\right]^\frac{1}{\epsilon'r'}\\
&\quad\lesssim\left[\fint_{R^-(\gamma)}
e^{\epsilon'q|\Re(z)|\,|b-b_R|}\right]^\frac{1}{\epsilon'q}
\left[\fint_{R^+(\gamma)}e^{\epsilon'r'|\Re(z)|\,|b-b_R|}\right]
^\frac{1}{\epsilon'r'}\lesssim1,
\end{align*}
where the implicit positive constants are independent of $z$.
Taking the suprema over all $R\in\mathcal{R}_p^{n+1}$ and
$z\in\mathbb{C}$ with $|z|\leq C_1$, we obtain
\eqref{20250106.1117}, which completes the proof of the
aforementioned claim.

Similarly, we find that there exists a positive constant $C_2$
such that
\begin{align}\label{20250106.1610}
\sup_{\{z\in\mathbb{C}:\ |z|\leq C_2\}}
\left[\omega e^{\Re(z)|b|}\right]_{A_{r,q}^+(\gamma)}<\infty.
\end{align}

Now, let $C_3:=\min\{C_1,\,C_2\}$. We show that, for any
$z\in\mathbb{C}$ satisfying $|z|\leq C_3$,
$\Phi_z(f)\in L^q(\omega^q)$.
Indeed, fix such $z$. Using \eqref{20250106.1117} and the
weighted boundedness of $T$, we conclude that
\begin{align}\label{20250106.2034}
\sup_{\{z\in\mathbb{C}:\ |z|\leq C_3\}}
\|\Phi_z(f)\|_{L^q(\omega^q)}&=
\sup_{\{z\in\mathbb{C}:\ |z|\leq C_3\}}
\left\|T(e^{-zb}f)\right\|
_{L^q(\omega^qe^{q\Re(z)b})}\\
&\lesssim\sup_{\{z\in\mathbb{C}:\ |z|\leq C_3\}}
\left\|e^{-zb}f\right\|
_{L^r(\omega^re^{r\Re(z)b})}=\|f\|_{L^r(\omega^r)}<\infty.\notag
\end{align}
Therefore, $\Phi_z(f)\in L^q(\omega^q)$ and hence $\Phi_z(f)$ is
well-defined.

Finally, let $\eta:=\frac{C_3}{2}$. We prove that $\Phi_z(f)$ is
continuous in $\{z\in\mathbb{C}:\ |z|<\eta\}$. Fix such $z$
and let $\{z_k\}_{k\in\mathbb{N}}$ in $\{z\in\mathbb{C}:\
|z|<\eta\}$ be such that $z_k\to z$ as $k\to\infty$. It remains
to show that $\|\Phi_{z_k}(f)-\Phi_z(f)\|_{L^q(\omega^q)}\to0$ as
$k\to\infty$. Indeed, for any $k\in\mathbb{N}$,
\begin{align}\label{20250106.1631}
\left\|\Phi_{z_k}(f)-\Phi_z(f)\right\|_{L^q(\omega^q)}^q
&\lesssim\int_{\mathbb{R}^{n+1}}
\left|T\left(\left(e^{-z_kb}-e^{-zb}\right)f\right)\right|^q
\omega^qe^{q\Re(z_k)b}\\
&\quad+\int_{\mathbb{R}^{n+1}}\left|T\left(e^{-zb}f\right)
\right|^q\omega^q\left|e^{z_kb}-e^{zb}\right|^q\notag\\
&=:\mathrm{I}+\mathrm{II}.\notag
\end{align}
We estimate $\mathrm{I}$ and $\mathrm{II}$ separately. We first
deal with $\mathrm{I}$. Notice that
\begin{align*}
\left|(e^{-z_kb}-e^{-zb})f\right|\omega
e^{\Re(z_k)b}\leq2\|f\|_{L^\infty}\omega
e^{2\eta|b|}\boldsymbol{1}_{\mathrm{supp}(f)}\in L^r
\end{align*}
[which can be deduced from \eqref{20250106.1610} and $r<q$].
Combining this, \eqref{20250106.1117}, the weighted
boundedness of $T$, and Lebesgue's dominated convergence
theorem, we find that
\begin{align}\label{20250106.1553}
\mathrm{I}\lesssim\int_{\mathbb{R}^{n+1}}
\left|\left(e^{-z_kb}-e^{-zb}\right)f\right|^r
\omega^re^{r\Re(z_k)b}\to0
\end{align}
as $k\to\infty$.

Next, we handle $\mathrm{II}$. Applying
\eqref{20250106.1610}, the weighted boundedness of $T$, and
$\omega e^{2\eta|b|}\boldsymbol{1}_{\mathrm{supp}(f)}\in L^r$,
we conclude that
\begin{align*}
\int_{\mathbb{R}^{n+1}}\left|T(e^{-zb}f)\right|^q
\omega^qe^{q\eta|b|}
&\lesssim\int_{\mathbb{R}^{n+1}}\left|e^{-zb}f\right|^r
\omega^re^{r\eta|b|}\leq\int_{\mathbb{R}^{n+1}}|f|^r\omega^r
e^{2r\eta|b|}\\
&\leq\|f\|_{L^\infty}^r\int_{\mathrm{supp}(f)}
\omega^re^{2r\eta|b|}<\infty.
\end{align*}
From this, $|T(e^{-zb}f)|\omega|e^{z_kb}-e^{zb}|
\leq2|T(e^{-zb}f)|\omega e^{\eta|b|}$, and Lebesgue's dominated
convergence theorem again, it follows that $\mathrm{II}\to0$ as
$k\to\infty$. Combining this, \eqref{20250106.1631}, and
\eqref{20250106.1553}, we obtain
$\|\Phi_{z_k}(f)-\Phi_z(f)\|_{L^q(\omega^q)}\to0$
as $k\to\infty$. This finishes the proof of Lemma \ref{analytic}.
\end{proof}

Theorem \ref{parabolic extrapolation for commutator} is a direct
consequence of Theorem \ref{parabolic extrapolation}, the
following result, and a standard density argument.

\begin{theorem}\label{commutator boundedness}
Let $\gamma\in(0,1)$, $1<r\leq q<\infty$, and $T:\
L_{\mathrm{loc}}^1\to\mathscr{M}$
be a linear operator. If, for any $\omega\in
A_{r,q}^+(\gamma)$, $T$ is bounded from
$L^r(\omega^r)$ to $L^q(\omega^q)$ with its operator norm
depending only on $n$, $p$, $\gamma$, $r$, $q$, and
$[\omega]_{A_{r,q}^+(\gamma)}$, then, for any  $k\in\mathbb{N}$,
$\omega\in A_{r,q}^+(\gamma)$, $b\in\mathrm{PBMO}$,
and $f\in C_{\mathrm{c}}$,
\begin{align}\label{20250106.2136}
\left\|[b,T]_k(f)\right\|_{L^q(\omega^q)}
\lesssim\|b\|_{\mathrm{PBMO}}^k
\|f\|_{L^r(\omega^r)},
\end{align}
where the implicit positive constant is independent of $f$ and $b$.
\end{theorem}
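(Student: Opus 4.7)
The plan is to apply the Cauchy integral trick of Alvarez--Buckley--Kenig--P\'erez \cite{abkp(sm-1993)} adapted to the parabolic setting, building on the conjugated family $\Phi_z(f):=e^{zb}T(e^{-zb}f)$ and the estimates already produced in the proof of Lemma \ref{analytic}. The case $k=0$ is precisely the hypothesis on $T$; when $b$ is almost-everywhere constant $[b,T]_k\equiv0$ for $k\geq1$; so I may assume $k\in\mathbb{N}$ and $\|b\|_{\mathrm{PBMO}}>0$, and fix $f\in C_{\mathrm{c}}$.

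The first step is the algebraic observation that expanding $e^{z(b(x,t)-b(y,s))}$ as a power series in $z$ and combining with \eqref{20250304.2138} yields the formal Taylor expansion
\begin{align*}
\Phi_z(f)(x,t)=\sum_{k=0}^{\infty}\frac{z^k}{k!}[b,T]_k(f)(x,t).
\end{align*}
Thus $[b,T]_k(f)$ is the $k$th Taylor coefficient of $z\mapsto\Phi_z(f)$ at $z=0$, so, assuming the vector-valued Cauchy formula applies on the disc $\{|z|<\eta\}$ furnished by Lemma \ref{analytic}, one has for any $0<r<\eta$ the $L^q(\omega^q)$-valued identity
\begin{align*}
[b,T]_k(f)=\frac{k!}{2\pi}\int_0^{2\pi}\Phi_{re^{i\theta}}(f)\,r^{-k}e^{-ik\theta}\,d\theta.
\end{align*}
Taking norms and applying Minkowski's integral inequality in $\theta$ gives
\begin{align*}
\|[b,T]_k(f)\|_{L^q(\omega^q)}\leq\frac{k!}{r^k}\sup_{|z|=r}\|\Phi_z(f)\|_{L^q(\omega^q)}.
\end{align*}
The supremum on the right is already bounded by $\lesssim\|f\|_{L^r(\omega^r)}$ uniformly for $|z|\leq C_3$ in \eqref{20250106.2034}, with the implicit constant depending only on the allowed parameters. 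Since the constant $C_3$ of Lemma \ref{analytic} is of order $1/\|b\|_{\mathrm{PBMO}}$ (visible in the explicit choice of $C_1$ via the John--Nirenberg bound \eqref{20250105.1711}), selecting $r:=\eta/2\sim1/\|b\|_{\mathrm{PBMO}}$ produces the scaling $r^{-k}\sim\|b\|_{\mathrm{PBMO}}^k$, yielding \eqref{20250106.2136}.

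The main obstacle is the rigorous justification of the vector-valued Cauchy representation, because Lemma \ref{analytic} supplies only continuity, not holomorphy, of $z\mapsto\Phi_z(f)$ into $L^q(\omega^q)$. I would upgrade continuity to strong holomorphy through a difference-quotient argument essentially parallel to the estimates of $\mathrm{I}$ and $\mathrm{II}$ in the proof of Lemma \ref{analytic}: the candidate derivative $\partial_z\Phi_z(f)=e^{zb}[b,T](e^{-zb}f)$ is controlled in $L^q(\omega^q)$ via the $A_{r,q}^+(\gamma)$-stability \eqref{20250106.1117} together with a dominated-convergence argument using the compact support of $f$ and the integrability of $e^{\delta|b|}$ on compact sets (again by \eqref{20250105.1711}), producing the $L^q(\omega^q)$ first-order expansion $\Phi_{z+h}(f)=\Phi_z(f)+h\,\partial_z\Phi_z(f)+o(h)$. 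Once holomorphy on $\{|z|<\eta\}$ is in hand, the Banach-space-valued Cauchy formula applies and the argument closes as sketched above.
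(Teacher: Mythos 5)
Your overall strategy is the same as the paper's (the Cauchy integral trick applied to the conjugated family $\Phi_z(f)=e^{zb}T(e^{-zb}f)$, with the radius $\eta\sim 1/\|b\|_{\mathrm{PBMO}}$ producing the factor $\|b\|_{\mathrm{PBMO}}^k$), but you handle the one genuinely delicate step differently. You propose to upgrade the continuity of Lemma \ref{analytic} to strong holomorphy via difference quotients and then invoke the Banach-space-valued Cauchy formula. The paper deliberately avoids proving holomorphy: it replaces $e^{\pm zb}$ by the Taylor polynomials $S^{(N)}_{\pm z}$, for which $\Phi_z^{(N)}(f)$ is a polynomial in $z$ with coefficients in $L^q(\omega^q)$, so the contour integral $\frac{k!}{2\pi i}\int_{|z|=\eta}z^{-k-1}\Phi_z^{(N)}(f)\,dz$ equals $[b,T]_k(f)$ for $N>k$ by exact residue algebra; the identity then survives the limit $N\to\infty$ by dominated convergence for Bochner integrals, and only continuity of $z\mapsto\Phi_z(f)$ is needed to estimate the limiting integral. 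What the paper's route buys is precisely the avoidance of the two analytic points your sketch leaves thin: (1) holomorphy itself, and more importantly (2) the identification of the $k$th Taylor coefficient of $\Phi_z(f)$ with $\frac{1}{k!}[b,T]_k(f)$. Your difference-quotient computation cleanly gives $\Phi'(0)(f)=[b,T](f)$, i.e.\ the case $k=1$, but for $k\geq2$ you must either iterate the differentiation (which requires controlling $e^{zb}[b,T]_j(e^{-zb}f)$ in $L^q(\omega^q)$ for $j<k$, dangerously close to what is being proved) or justify convergence of the operator power series $\sum_j\frac{z^j}{j!}[b,T]_j(f)$ in $L^q(\omega^q)$. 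The latter can in fact be done non-circularly — using $|b|^j\leq j!\,\delta^{-j}e^{\delta|b|}$, John--Nirenberg, and the stability \eqref{20250106.1610} one gets the crude bound $\|[b,T]_j(f)\|_{L^q(\omega^q)}\lesssim j!(2/\delta)^j\|f\|_{L^\infty}[\int_{\mathrm{supp}(f)}\omega^re^{2r\delta|b|}]^{1/r}$, so the series converges for $|z|$ small and $\Phi_z(f)$ is then automatically holomorphic with the right Taylor coefficients — but this argument is not in your write-up and is the one missing ingredient needed to close your version of the proof for $k\geq2$.
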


\begin{proof}
Let $k\in\mathbb{N}$, $\omega\in A_{r,q}^+(\gamma)$,
$b\in\mathrm{PBMO}$, and $f\in C_\mathrm{c}$.
For any $N\in\mathbb{N}$ and $z\in\mathbb{C}$, let
\begin{align*}
S_z^{(N)}:=\sum_{j=0}^N\frac{(zb)^j}{j!}\mbox{\ \ and\ \ }
\Phi_z^{(N)}(f):=S_z^{(N)}T\left(S_{-z}^{(N)}f\right).
\end{align*}
Using Taylor's formula, we find that, for any
$z\in\mathbb{C}$, $S^{(N)}_z\to e^{zb}$ as $N\to\infty$. Let
$\eta$ be the same as in Lemma \ref{analytic}. From
this and an argument similar to that used in the proof of Lemma
\ref{analytic}, we deduce that, for any $z\in\mathbb{C}$
satisfying $|z|\leq\eta$,
\begin{align}\label{20250106.2003}
\left\|\Phi_z^{(N)}(f)-\Phi_z(f)\right\|_{L^q(\omega^q)}\to0
\end{align}
as $N\to\infty$. Moreover, applying that $|S_z^{(N)}|\leq
e^{\eta|b|}$ for any $z\in\{z\in\mathbb{C}:\ |z|\leq\eta\}$ and
$N\in\mathbb{N}$, \eqref{20250106.1610}, the weighted
boundedness of $T$, and $\omega e^{2\eta|b|}
\boldsymbol{1}_{\mathrm{supp}(f)}\in L^r$ (see the proof
of Lemma \ref{analytic}), we obtain
\begin{align*}
\sup_{N\in\mathbb{N}}\sup_{\{z\in\mathbb{C}:\ |z|\leq A\}}
\left\|\Phi_z^{(N)}(f)\right\|_{L^q(\omega^q)}&\leq
\sup_{N\in\mathbb{N}}\sup_{\{z\in\mathbb{C}:\ |z|\leq A\}}
\left\|e^{\eta|b|}T\left(S_{-z}^{(N)}f\right)
\right\|_{L^q(\omega^q)}\\
&\lesssim\sup_{N\in\mathbb{N}}\sup_{\{z\in\mathbb{C}:\ |z|
\leq A\}}\left\|e^{\eta|b|}S_{-z}^{(N)}f
\right\|_{L^r(\omega^r)}\\
&\leq\left\|e^{2\eta|b|}f\right\|_{L^r(\omega^r)}\\
&\leq\|f\|_{L^\infty}\left[\int_{\mathrm{supp}(f)}
\omega^re^{2r\eta|b|}\right]^\frac{1}{r}<\infty.
\end{align*}
This, together with \cite[Proposition 1.2.2]{Hytonen-bookI},
further implies that, for any given $\eta\in(0,A)$ and
for any $N\in\mathbb{N}$,
\begin{align}\label{20250106.2035}
\left\|\frac{k!}{2\pi i}\int_{\{z\in\mathbb{C}:\ |z|=\eta\}}
\frac{\Phi_z^{(N)}(f)}{z^{k+1}}\,dz\right\|_{L^q(\omega^q)}
\lesssim\frac{k!}{\eta^k}\|f\|_{L^\infty}
\left[\int_{\mathrm{supp}(f)}\omega^re^{2r\eta|b|}\right]
^\frac{1}{r}<\infty.
\end{align}
Furthermore, by \eqref{20250106.2034} and \cite[Proposition
1.2.2]{Hytonen-bookI} again, we find that the Bochner integral
\begin{align*}
\frac{k!}{2\pi i}\int_{\{z\in\mathbb{C}:\ |z|=\eta\}}
\frac{\Phi_z(f)}{z^{k+1}}\,dz
\end{align*}
exists and determine a bounded operator from $C_\mathrm{c}\subset
L^r(\omega^r)$ to $L^q(\omega^q)$. To be more precise,
\begin{align}\label{20250106.2133}
\left\|\frac{k!}{2\pi i}\int_{\{z\in\mathbb{C}:\ |z|=\eta\}}
\frac{\Phi_z(f)}{z^{k+1}}\,dz\right\|_{L^q(\omega^q)}\lesssim
\frac{1}{\eta^k}\|f\|_{L^r(\omega^r)}.
\end{align}
From this, \eqref{20250106.2003}, \eqref{20250106.2035}, and the
dominated convergence theorem for the Bochner integral (see, for
example, \cite[Proposition 1.2.5]{Hytonen-bookI}), we infer that
\begin{align}\label{2255}
\left\|\frac{k!}{2\pi i}\int_{\{z\in\mathbb{C}:\ |z|=\eta\}}
\frac{\Phi_z^{(N)}(f)}{z^{k+1}}\,dz-\frac{k!}{2\pi i}
\int_{\{z\in\mathbb{C}:\ |z|=\eta\}}\frac{\Phi_z(f)}{z^{k+1}}\,dz
\right\|_{L^q(\omega^q)}\to0
\end{align}
as $N\to\infty$.

On the other hand, by the Cauchy integral formula, we conclude that,
for any $N\in\mathbb{N}\cap(k,\infty)$ and
$(x,t)\in\mathbb{R}^{n+1}$,
\begin{align*}
[b,T]_k(f)(x,t)&=T\left([b(x,t)-b(\cdot)]^{k}f(\cdot)\right)(x,t)\\
&=k!\sum_{j=0}^k\frac{[b(x,t)]^{k-j}}{(k-j)!}
T\left(\frac{[-b(\cdot)]^j}{j!}f(\cdot)\right)(x,t)\\
&=\sum_{j=0}^N\sum_{m=0}^N\frac{[b(x,t)]^m}{m!}
T\left(\frac{[-b(\cdot)]^j}{j!}f(\cdot)\right)(x,t)\\
&\quad\times
\frac{k!}{2\pi i}\int_{\{z\in\mathbb{C}:\ |z|=\eta\}}
z^{j+m-k-1}\,dz\\
&=\frac{k!}{2\pi i}\int_{\{z\in\mathbb{C}:\ |z|=\eta\}}
\frac{\Phi_z^{(N)}(f)(x,t)}{z^{k+1}}\,dz.
\end{align*}
Using this and \eqref{2255}, we find that
\begin{align*}
[b,T]_k(f)=\frac{k!}{2\pi i}
\int_{\{z\in\mathbb{C}:\ |z|=\eta\}}
\frac{\Phi_z(f)}{z^{k+1}}\,dz,
\end{align*}
which, together with \eqref{20250106.2133}, further implies that
\begin{align*}
\|[b,T]_k(f)\|_{L^q(\omega^q)}\lesssim
\frac{1}{\eta^k}\|f\|_{L^r(\omega^r)}.
\end{align*}
Therefore, \eqref{20250106.2136} holds, which
completes the proof of Theorem \ref{commutator boundedness}.
\end{proof}

\section{Parabolic Commutators and Their Applications to
Characterizing Parabolic BMO-Type Spaces}
\label{section3}

This section is divided into three subsections. In Subsection
\ref{subsection3.1}, we prove the mutual equivalences of
(i) through (vii) of Theorem \ref{commutator theorem 2}.
In Subsection \ref{subsection3.2}, we characterize the parabolic
BMO space and parabolic Campanato spaces in terms of commutators
of parabolic maximal operators with time lag. In Subsection
\ref{subsection3.3}, we show Theorem \ref{[I_alpha,b]} and that
(i) and (viii) through (xiii) of Theorem
\ref{commutator theorem 2} are mutually equivalent.

\subsection{Parabolic Fractional Maximal Commutators with Time Lag}
\label{subsection3.1}

The classical maximal commutator, introduced by Garc\'ia-Cuerva
et al. \cite{ghst(iumj-1991)}, plays a crucial role in
investigating the weighted boundedness of commutators of
singular integral operators. They also proved that the
boundedness of the maximal commutator can characterize the space
BMO. We refer to \cite{az(bkms-2023), st(iumj-1989),
st(tams-1993)} for more studies regarding the maximal commutator
and to \cite{lr(prse-2005), lr(mia-2007)} for more studies on its
one-sided version. The following is the definition of parabolic
fractional maximal commutators with time lag.

\begin{definition}\label{parabolic maximal commutator}
Let $\gamma,\alpha\in[0,1)$, $k\in\mathbb{N}$, and $b\in
L_{\mathrm{loc}}^1$. The \emph{$k$th order parabolic fractional
maximal commutators $M_{\alpha,b}^{\gamma+,k}$ and
$M_{\alpha,b}^{\gamma-,k}$ with time lag} are defined,
respectively, by setting, for any $f\in L_{\mathrm{loc}}^1$
and $(x,t)\in\mathbb{R}^{n+1}$,
\begin{align*}
M_{\alpha,b}^{\gamma+,k}(f)(x,t):=
\sup_{\genfrac{}{}{0pt}{}{R\in
\mathcal{R}_{p}^{n+1}}{(x,t)\in R^-(\gamma)}}
\left|R^+(\gamma)\right|^\alpha\fint_{R^+(\gamma)}
\left|b(x,t)-b(y,s)\right|^k|f(y,s)|\,dy\,ds
\end{align*}
and
\begin{align*}
M_{\alpha,b}^{\gamma-,k}(f)(x,t):=
\sup_{\genfrac{}{}{0pt}{}{R\in
\mathcal{R}_{p}^{n+1}}{(x,t)\in R^+(\gamma)}}
\left|R^-(\gamma)\right|^\alpha\fint_{R^-(\gamma)}
\left|b(x,t)-b(y,s)\right|^k|f(y,s)|\,dy\,ds.
\end{align*}
\end{definition}

Let $\beta\in(0,1]$. The \emph{parabolic Lipschitz space
$\mathrm{PLip}^\beta$} is defined to be the space of all
functions $f$ on $\mathbb{R}^{n+1}$ such that
\begin{align*}
\|f\|_{\mathrm{PLip}^\beta}:=
\sup_{\genfrac{}{}{0pt}{}{(x,t),(y,s)\in\mathbb{R}^{n+1}}
{(x,t)\neq(y,s)}}\frac{|f(x,t)-f(y,s)|}
{[d_p((x,t),(y,s))]^\beta}<\infty.
\end{align*}
We have the following coincidence of the parabolic Lipschitz
space and the parabolic Campanato space, which is a special case
of \cite[Theorem 5]{ms(am-1979)}.

\begin{lemma}\label{Campanato and Lip}
Let $\beta\in(0,\frac{1}{n+p})$. Then
$\mathrm{PC}^{\beta}$ and $\mathrm{PLip}^{(n+p)\beta}$
coincide with equivalent norms. Moreover,
for any $f\in L_{\mathrm{loc}}^1$,
\begin{align*}
\|f\|_{\mathrm{PC}^{\beta}}\sim
\|f\|_{\mathrm{PLip}^{(n+p)\beta}},
\end{align*}
where the positive equivalence constants are independent of $f$.
\end{lemma}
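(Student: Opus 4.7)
The plan is to prove the two inclusions separately via the standard Campanato--Meyers approach, adapted to the anisotropic parabolic geometry. The essential scaling observation is that a parabolic rectangle $R(x,t,L)$ satisfies $|R|=|Q(x,L)|\cdot 2L^{p}\sim L^{n+p}$, so $|R|^{\beta}\sim L^{(n+p)\beta}$; this is precisely the identity that converts the Campanato control $\fint_{R}|f-f_{R}|\lesssim |R|^{\beta}$ into a parabolic H\"older exponent of $(n+p)\beta$. The hypothesis $\beta<\frac{1}{n+p}$ enters only to ensure $(n+p)\beta<1$, so that $\mathrm{PLip}^{(n+p)\beta}$ is a bona fide H\"older space.

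For the easy direction $\mathrm{PLip}^{(n+p)\beta}\hookrightarrow\mathrm{PC}^{\beta}$, I would fix any $R=R(x,t,L)\in\mathcal{R}_{p}^{n+1}$ and note that any two points of $R$ lie at parabolic distance at most a constant multiple of $L$. Hence, for $f\in\mathrm{PLip}^{(n+p)\beta}$,
\begin{align*}
\fint_{R}|f-f_{R}|\leq \fint_{R}\fint_{R}|f(y,s)-f(y',s')|\,dy'\,ds'\,dy\,ds\lesssim L^{(n+p)\beta}\|f\|_{\mathrm{PLip}^{(n+p)\beta}},
\end{align*}
and dividing by $|R|^{\beta}\sim L^{(n+p)\beta}$ finishes this direction upon taking the supremum over $R$.

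For the converse $\mathrm{PC}^{\beta}\hookrightarrow\mathrm{PLip}^{(n+p)\beta}$, which is the heart of the lemma, I would run a chain-of-averages argument. Given distinct $(x,t),(y,s)$ with $d:=d_{p}((x,t),(y,s))$, first select a base parabolic rectangle $R_{0}$ of edge length comparable to $d$ containing both points. Then, for $(x,t)$, construct a nested sequence $\{R_{j}\}_{j\in\mathbb{Z}_{+}}$ of parabolic rectangles with edge length $\sim 2^{-j}L_{0}$, each containing $(x,t)$ and satisfying $R_{j}\subset CR_{j-1}$ with $|R_{j-1}\cap R_{j}|\sim |R_{j}|$; do the same for $(y,s)$. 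The standard comparison
\begin{align*}
|f_{R_{j}}-f_{R_{j-1}}|\lesssim \fint_{R_{j-1}}|f-f_{R_{j-1}}|\leq |R_{j-1}|^{\beta}\|f\|_{\mathrm{PC}^{\beta}}
\end{align*}
summed via the geometric series $\sum_{j}2^{-(n+p)\beta j}$ (convergent since $\beta>0$), together with the Lebesgue differentiation theorem along the doubling basis of parabolic rectangles (which gives $f_{R_{j}}\to f(x,t)$ a.e.), yields $|f(x,t)-f_{R_{0}}|\lesssim d^{(n+p)\beta}\|f\|_{\mathrm{PC}^{\beta}}$, and analogously for $(y,s)$; a triangle inequality then produces the desired Lipschitz estimate.

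The main obstacle is the anisotropic time lag: unlike Euclidean cubes, contracting a parabolic rectangle by factor $2^{-1}$ in edge length scales the spatial side by $2^{-1}$ but the temporal side by $2^{-p}$, so concentric nesting is delicate and one must verify that each $R_{j}$ still contains the target point while keeping $|R_{j-1}\cap R_{j}|\sim|R_{j}|$ so the averaging comparison applies. This can be resolved either by a small off-centering combined with a fixed enlargement factor, or, as in the reference \cite{ms(am-1979)}, by recognizing that $\mathbb{R}^{n+1}$ equipped with $d_{p}$ and Lebesgue measure is a space of homogeneous type in which parabolic rectangles are equivalent to quasi-metric balls, so the abstract Campanato--Meyers identification applies directly. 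The remainder is quantitative bookkeeping to extract the norm equivalence $\|f\|_{\mathrm{PC}^{\beta}}\sim\|f\|_{\mathrm{PLip}^{(n+p)\beta}}$.
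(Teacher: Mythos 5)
Your argument is correct, but it is worth noting that the paper does not prove this lemma at all: it simply observes that the statement is a special case of \cite[Theorem 5]{ms(am-1979)}, i.e., the abstract Campanato--Lipschitz identification on spaces of homogeneous type, applied to $(\mathbb{R}^{n+1},d_p,\mathcal{L}^{n+1})$ with parabolic rectangles playing the role of quasi-metric balls. Your proposal instead gives a self-contained telescoping proof, which is a legitimate and more elementary route; you also correctly identify the citation route as the paper's intended one. One clarification on the point you flag as the ``main obstacle'': concentric nesting of parabolic rectangles is in fact not delicate. If $R_j:=R(x,t,2^{-j}L_0)$, then $R_j\subset R_{j-1}$ trivially (both the spatial cube and the time interval shrink), and $|R_{j-1}|/|R_j|=2^{n+p}$ is a fixed constant, which is all the comparison $|f_{R_j}-f_{R_{j-1}}|\leq\frac{|R_{j-1}|}{|R_j|}\fint_{R_{j-1}}|f-f_{R_{j-1}}|$ requires; no off-centering is needed. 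The only remaining bookkeeping is to join the two chains at the top, e.g.\ by comparing $R(x,t,2d)$ and $R(y,s,2d)$ to the common rectangle $R(x,t,4d)$, and to invoke the Lebesgue differentiation theorem for parabolic rectangles (available in the paper's setting via \cite[Lemma 2.3]{kmy(ma-2023)}) to conclude $f_{R_j}\to f$ a.e., so that the Lipschitz bound holds for the continuous representative of $f$. What the citation buys is brevity and the reassurance that the quasi-metric measure space axioms are verified once and for all; what your direct argument buys is transparency about where the exponent $(n+p)\beta$ and the restriction $\beta<\frac{1}{n+p}$ come from.
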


Now, applying Lemma \ref{Campanato and Lip}, we
are ready to show the mutual equivalence among (i) through (vii)
of Theorem \ref{commutator theorem 2}, that is, the Campanato
space can be characterized in terms of the weighted boundedness
of the parabolic fractional maximal commutators.

\begin{proof}[Proof of Mutual Equivalences among (i) through (vii)
of Theorem \ref{commutator theorem 2}]
We first prove (i)\ $\Longrightarrow$\ (ii), (iv), and (vi). Let
$k\in\mathbb{N}$ and $\gamma\in(0,1)$. From Lemma
\ref{Campanato and Lip}, we deduce
that, for any given $R\in\mathcal{R}_p^{n+1}$ and for any $f\in
L_{\mathrm{loc}}^1$ and $(x,t)\in R^-(\gamma)$,
\begin{align*}
&\left|R^+(\gamma)\right|^\alpha
\fint_{R^+(\gamma)}|b(x,t)-b(y,s)|^k|f(y,s)|\,dy\,ds\\
&\quad\leq\|b\|_{\mathrm{PLip}^{(n+p)\beta}}^k
\left|R^+(\gamma)\right|^\alpha
\fint_{R^+(\gamma)}\left[d_p((x,t),(y,s))\right]
^{(n+p)\beta k}|f(y,s)|\,dy\,ds\\
&\quad\lesssim\|b\|_{\mathrm{PC}^\beta}
^k\left|R^+(\gamma)\right|^{\alpha+\beta k}
\fint_{R^+(\gamma)}|f|\leq\|b\|_{\mathrm{PC}^\beta}^k
M_{\alpha+\beta k}^{\gamma+}(f)(x,t),
\end{align*}
which further implies that
\begin{align*}
M_{\alpha,b}^{\gamma+,k}(f)(x,t)\lesssim
\|b\|_{\mathrm{PC}^\beta}^k
M_{\alpha+\beta k}^{\gamma+}(f)(x,t).
\end{align*}
Combining this and Lemma \ref{weighted inequality uncentered}, we
obtain (ii), (iv), and (vi).

The proofs of (ii)\ $\Longrightarrow$\ (iii), (iv)\
$\Longrightarrow$\ (v), and (vi)\ $\Longrightarrow$\ (vii)
are trivial. Next, we show (iii) $\Longrightarrow$ (i).
Let $R\in\mathcal{R}_p^{n+1}$. Using Definition
\ref{parabolic maximal commutator},
we conclude that, for any $(x,t)\in R^-(\gamma)$,
\begin{align}\label{20250405.2200}
\left|R^+(\gamma)\right|^\alpha
\fint_{R^+(\gamma)}|b(x,t)-b(y,s)|\,dy\,ds\leq
M_{\alpha,b}^{\gamma+,1}\left(\boldsymbol{1}
_{R^+(\gamma)}\right)(x,t).
\end{align}
From this, H\"older's inequality, and the boundedness of
$M_{\alpha,b}^{\gamma+,1}$ from $L^r$ to $L^q$, we infer that
\begin{align}\label{20250305.2233}
&\frac{1}{|R^-(\gamma)|^\beta}\fint_{R^-(\gamma)}
\left|b-b_{R^-(\gamma)}\right|\\
&\quad\lesssim\frac{1}{|R^-(\gamma)|^\beta}\inf_{c\in\mathbb{R}}
\fint_{R^-(\gamma)}|b-c|
\leq\frac{1}{|R^-(\gamma)|^\beta}\fint_{R^-(\gamma)}
\left|b-b_{R^+(\gamma)}\right|\notag\\
&\quad\leq\frac{1}{|R^-(\gamma)|^\beta}\fint_{R^-(\gamma)}
\fint_{R^+(\gamma)}|b(x,t)-b(y,s)|\,dy\,ds\,dx\,dt\notag\\
&\quad\leq\frac{1}{|R^-(\gamma)|^{\beta+\alpha}}
\fint_{R^-(\gamma)}M_{\alpha,b}^{\gamma+,1}
\left(\boldsymbol{1}_{R^+(\gamma)}\right)\notag\\
&\quad\leq\frac{1}{|R^-(\gamma)|^{\beta+\alpha+\frac{1}{q}}}
\left\|M_{\alpha,b}^{\gamma+,1}
\left(\boldsymbol{1}_{R^+(\gamma)}\right)\right\|_{L^q}\notag\\
&\quad\leq\frac{\|M_{\alpha,b}^{\gamma+,1}\|_{\mathscr{L}(L^r,
L^q)}}{|R^-(\gamma)|^{\beta+\alpha+\frac{1}{q}}}
\left\|\boldsymbol{1}_{R^+(\gamma)}\right\|_{L^r}=
\left\|M_{\alpha,b}^{\gamma+,1}\right\|_{\mathscr{L}(L^r,
L^q)},\notag
\end{align}
Taking the supremum over all $R\in\mathcal{R}_p^{n+1}$, we
obtain $b\in\mathrm{PC}^\beta$ and hence (i) holds.

Now, we prove (v)\ $\Longrightarrow$\ (i). Let
$R\in\mathcal{R}_p^{n+1}$. Applying \eqref{20250405.2200} and
the boundedness of $M_{\alpha,b}^{\gamma+,1}$ from $L^r$ to
$L^{q,\infty}$, we conclude that, for any $\lambda\in(0,\infty)$,
\begin{align*}
\left|R^-(\gamma)\cap
\left\{\left|b-b_{R^+(\gamma)}\right|>\lambda\right\}\right|
&\leq\left|R^-(\gamma)\cap \left\{M_{\alpha,b}^{\gamma+,1}
\left(\boldsymbol{1}_{R^+(\gamma)}\right)>\lambda
\left|R^+(\gamma)\right|^\alpha\right\}\right|\\
&\leq\frac{\|M_{\alpha,b}^{\gamma+,1}\|_{\mathscr{L}(L^r,
L^{q,\infty})}}{\lambda^q|R^+(\gamma)|^{\alpha q}}
\left\|\boldsymbol{1}_{R^+(\gamma)}\right\|_{L^r}^q\\
&=\frac{\|M_{\alpha,b}^{\gamma+,1}\|_{\mathscr{L}(L^r,
L^{q,\infty})}}{\lambda^q}\left|R^+(\gamma)\right|
^{\frac{q}{r}-\alpha q}.
\end{align*}
This, together with \eqref{20250305.2233}, Cavalieri's
principle (see, for example, \cite[(2.1)]{duo-book}), and
$\frac{1}{r}-\frac{1}{q}=\alpha+\beta$, further implies that
\begin{align*}
&\frac{1}{|R^-(\gamma)|^\beta}\fint_{R^-(\gamma)}
\left|b-b_{R^-(\gamma)}\right|\\
&\quad\lesssim\frac{1}{|R^-(\gamma)|^\beta}\fint_{R^-(\gamma)}
\left|b-b_{R^+(\gamma)}\right|\\
&\quad=\frac{1}{|R^-(\gamma)|^{\beta+1}}
\int_0^\infty\left|R^-(\gamma)\cap
\left\{\left|b-b_{R^+(\gamma)}\right|>\lambda
\right\}\right|\,d\lambda\\
&\quad=\frac{1}{|R^-(\gamma)|^{\beta+1}}
\left(\int_0^\delta+\int_\delta^\infty\right)\left|R^-(\gamma)\cap
\left\{\left|b-b_{R^+(\gamma)}\right|>\lambda
\right\}\right|\,d\lambda\\
&\quad\lesssim\left\|M_{\alpha,b}^{\gamma+,1}\right\|
_{\mathscr{L}(L^r\,L^{q,\infty})}
\left[\delta\left|R^-(\gamma)\right|
+\left|R^+(\gamma)\right|^{\frac{q}{r}-\alpha
q}\int_\delta^\infty\frac{1}{\lambda^q}
\,d\lambda\right]\sim\left\|M_{\alpha,b}^{\gamma+,1}\right\|
_{\mathscr{L}(L^r,L^{q,\infty})},
\end{align*}
where $\delta:=|R^-(\gamma)|^{\beta}$. Taking the supremum over
all $R\in\mathcal{R}_p^{n+1}$, we find that
$b\in\mathrm{PC}^\beta$ and hence (i) holds.

Finally, we show (vii)\ $\Longrightarrow$\ (i). Let
$R\in\mathcal{R}_p^{n+1}$. From \eqref{20250305.2233} with
$\gamma$ therein replaced by $\rho$, the boundedness of
$M_{\alpha,b}^{\rho+,1}$ from $L^r$ to $L^\infty$,
and $\frac{1}{r}=\alpha+\beta$, we deduce that
\begin{align*}
\frac{1}{|R^-(\rho)|^\beta}\fint_{R^-(\rho)}
\left|b-b_{R^-(\rho)}\right|
&\leq\frac{1}{|R^-(\rho)|^{\beta+\alpha}}
\fint_{R^-(\rho)}M_{\alpha,b}^{\rho+,1}
\left(\boldsymbol{1}_{R^+(\rho)}\right)\\
&\leq\frac{1}{|R^-(\rho)|^{\beta+\alpha}}
\left\|M_{\alpha,b}^{\rho+,1}
\left(\boldsymbol{1}_{R^+(\rho)}\right)\right\|_{L^\infty}\\
&\leq\frac{\|M_{\alpha,b}^{\rho+,1}\|_{\mathscr{L}(L^r,
L^\infty)}}{|R^-(\rho)|^{\beta+\alpha}}
\left\|\boldsymbol{1}_{R^+(\rho)}\right\|_{L^r}=
\left\|M_{\alpha,b}^{\rho+,1}\right\|_{\mathscr{L}(L^r,
L^\infty)}.
\end{align*}
Taking the supremum over all $R\in\mathcal{R}_p^{n+1}$, we
conclude that $b\in\mathrm{PC}^\beta$ and hence (i) holds.
This finishes the proof of the mutual equivalences of (i)
through (vii) of Theorem \ref{commutator theorem 2}.
\end{proof}

\subsection{Commutators of Parabolic Maximal Operators with Time Lag}
\label{subsection3.2}

The boundedness of the commutator of the Hardy--Littlewood
maximal operator was first obtained by Milman and Schonbek
\cite{ms(pams-1990)}; see also \cite{az(bkms-2023),
bmr(pams-2000)}. In this subsection, we establish the
characterization of the parabolic BMO space and parabolic
Campanato spaces in terms of the boundedness of commutators of
parabolic maximal operators with time lag. Recall that an
operator $T$ defined on a linear subspace $D(T)$ of
$L_{\mathrm{loc}}^1$ and taking
values in $\mathscr{M}$ is called a \emph{positive quasilinear
operator} if $T$ satisfies
\begin{enumerate}
\item[\rm(i)] $T(f)(x,t)\in[0,\infty)$ for any $f\in D(T)$ and
$(x,t)\in\mathbb{R}^{n+1}$;

\item[\rm(ii)] $T(\alpha f)=|\alpha|T(f)$ for any $f\in D(T)$
and $\alpha\in\mathbb{R}$;

\item[\rm(iii)] $|T(f)-T(g)|\leq T(f-g)$ for any $f,g\in D(T)$.
\end{enumerate}
For example, for any $\gamma,\alpha\in[0,1)$, both
$M_\alpha^{\gamma+}$ and $M_\alpha^{\gamma-}$ are positive
quasilinear operators. Let $T$ be a positive quasilinear
operator and $b\in L_{\mathrm{loc}}^1$. Recall that the
\emph{commutator $[b,T]$ of $T$ with $b$} is defined by setting,
for any suitable function $f$ on $\mathbb{R}^{n+1}$ and any
$(x,t)\in\mathbb{R}^{n+1}$,
\begin{align*}
[b,T](f)(x,t):=b(x,t)T(f)(x,t)-T(bf)(x,t).
\end{align*}
We simply denote $[b,M^{\gamma+}_0]$ and $[b,M^{\gamma-}_0]$,
respectively, by $[b,M^{\gamma+}]$ and $[b,M^{\gamma-}]$. The
commutator of the parabolic fractional maximal operator with
time lag and the parabolic fractional maximal commutator with
time lag are essentially different because the latter is both
positive and sublinear, while the former is neither. However,
they have the following relation.

\begin{lemma}\label{relationship between two commutators}
Let $\gamma,\alpha\in[0,1)$ and $b\in
L_{\mathrm{loc}}^1$. Then, for any
$f\in L_{\mathrm{loc}}^1$,
\begin{align*}
\left[b,M^{\gamma+}_\alpha\right](f)\leq
M_{\alpha,b}^{\gamma+,1}(f)+2b_-M^{\gamma+}_\alpha(f).
\end{align*}
\end{lemma}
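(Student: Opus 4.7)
The plan is to fix $(x,t) \in \mathbb{R}^{n+1}$, observe that we may assume $f \geq 0$ (both $M^{\gamma+}_\alpha$ and $M^{\gamma+,1}_{\alpha,b}$ already involve $|f|$ in their definitions), abbreviate $T := M^{\gamma+}_\alpha$, and, for each $R \in \mathcal{R}_p^{n+1}$ with $(x,t) \in R^-(\gamma)$, set $A_R(g) := |R^+(\gamma)|^\alpha \fint_{R^+(\gamma)} g$ for $g \geq 0$. Then $T(f)(x,t) = \sup_R A_R(f)$ and, since $|bf| = |b|\,f$, also $T(bf)(x,t) = \sup_R A_R(|b|\,f)$.

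The first move is to replace $b(x,t)$ by its positive part $b_+(x,t)$ in the commutator: writing $b(x,t) = b_+(x,t) - b_-(x,t)$ and discarding the nonnegative term $b_-(x,t)\, T(f)(x,t)$,
$$[b,T](f)(x,t) \leq b_+(x,t)\, T(f)(x,t) - T(bf)(x,t).$$
This step is essential, for now the scalar in front of the first supremum is nonnegative, and the elementary inequality $\sup_R \alpha_R - \sup_R \beta_R \leq \sup_R(\alpha_R - \beta_R)$ applies, giving
$$[b,T](f)(x,t) \leq \sup_R\bigl[b_+(x,t)\, A_R(f) - A_R(|b|\,f)\bigr].$$

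The remaining task is a pointwise comparison inside each parabolic rectangle. From $b_+(x,t) = b(x,t) + b_-(x,t)$, the bound $b(x,t) \leq |b(x,t)|$, and the reverse triangle inequality $|b(x,t)| - |b(y,s)| \leq |b(x,t) - b(y,s)|$, I obtain
$$b_+(x,t) - |b(y,s)| \leq |b(x,t) - b(y,s)| + b_-(x,t).$$
Multiplying by $f(y,s)$, averaging over $R^+(\gamma)$, and rescaling by $|R^+(\gamma)|^\alpha$ yield
$$b_+(x,t)\, A_R(f) - A_R(|b|\,f) \leq M^{\gamma+,1}_{\alpha,b}(f)(x,t) + b_-(x,t)\, T(f)(x,t),$$
where the first term follows directly from the definition of $M^{\gamma+,1}_{\alpha,b}$ and we used $A_R(f) \leq T(f)(x,t)$ on the last term. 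Taking the supremum over admissible $R$ and replacing $b_-\,T(f)$ by $2b_-\,T(f)$ (to spare a factor) completes the proof. The only delicate point is the $b_+$ substitution at the outset; without it, the $\sup - \sup$ manipulation would fail when $b(x,t) < 0$, and one cannot otherwise commute the scalar past the supremum defining $T(f)(x,t)$.
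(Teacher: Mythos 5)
Your proof is correct. It takes a mildly different route from the paper's: the paper first observes the exact identity $|[b,M^{\gamma+}_\alpha](f)-[|b|,M^{\gamma+}_\alpha](f)|=2b_-M^{\gamma+}_\alpha(f)$ (using $M^{\gamma+}_\alpha(bf)=M^{\gamma+}_\alpha(|b|f)$), and then bounds $|[|b|,M^{\gamma+}_\alpha](f)|$ by $M^{\gamma+,1}_{\alpha,|b|}(f)\leq M^{\gamma+,1}_{\alpha,b}(f)$; you instead decompose $b(x,t)=b_+(x,t)-b_-(x,t)$, discard the nonnegative term $b_-(x,t)M^{\gamma+}_\alpha(f)(x,t)$, and run the $\sup$--$\sup$ comparison directly with the nonnegative scalar $b_+(x,t)$. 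Both arguments rest on the same two ingredients (the quasi-subadditivity of the supremum and the reverse triangle inequality $|b(x,t)|-|b(y,s)|\leq|b(x,t)-b(y,s)|$), and your identification of the need for a nonnegative scalar before commuting past the supremum is exactly the point the paper's $|b|$-trick handles. What each buys: your version actually yields the sharper one-sided bound $[b,M^{\gamma+}_\alpha](f)\leq M^{\gamma+,1}_{\alpha,b}(f)+b_-M^{\gamma+}_\alpha(f)$ (constant $1$, artificially doubled at the end to match the statement), whereas the paper's version gives the stronger two-sided estimate $|[b,M^{\gamma+}_\alpha](f)|\leq M^{\gamma+,1}_{\alpha,b}(f)+2b_-M^{\gamma+}_\alpha(f)$, which is what is really used downstream. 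A cosmetic remark: in your pointwise step you could use $b_+(x,t)\leq|b(x,t)|$ directly, which makes the extra $b_-(x,t)$ on the right-hand side of that intermediate inequality unnecessary.
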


\begin{proof}
Let $f\in L_{\mathrm{loc}}^1$. Since
\begin{align*}
\left|\left[b,M^{\gamma+}_\alpha\right](f)-
\left[|b|,M^{\gamma+}_\alpha\right](f)\right|=
2b_-M^{\gamma+}_\alpha(f),
\end{align*}
it follows that
\begin{align*}
\left|\left[b,M^{\gamma+}_\alpha\right](f)\right|&\leq
\left|\left[|b|,M^{\gamma+}_\alpha\right](f)\right|+
2b_-M^{\gamma+}_\alpha(f)\\
&\leq M_{\alpha,|b|}^{\gamma+,1}(f)+2b_-M^{\gamma+}_\alpha(f)
\leq M_{\alpha,b}^{\gamma+,1}(f)+2b_-M^{\gamma+}_\alpha(f).
\end{align*}
This finishes the proof of Lemma
\ref{relationship between two commutators}.
\end{proof}

The following is the main result of this subsection.

\begin{theorem}\label{commutator theorem 1}
Let $\gamma\in(0,1)$, $\beta\in(0,\frac{1}{n+p})$, and $b\in
L_{\mathrm{loc}}^1$. Then the following statements hold.
\begin{enumerate}
\item[\rm(i)] $b\in\mathrm{PBMO}$ and $b_-\in L^\infty$ if
and only if, for any (or some) $q\in(1,\infty)$, both
$[b,M^{\gamma+}]$ and $[b,M^{\gamma-}]$ are bounded on $L^q$.

\item[\rm(ii)] $b\in\mathrm{PC}^\beta$ and $b$ is
nonnegative if and only if, for any (or some) $1<r<q<\infty$ with
$\frac{1}{r}-\frac{1}{q}=\beta$, both $[b,M^{\gamma+}]$ and
$[b,M^{\gamma-}]$ are bounded from $L^r$ to $L^q$.
\end{enumerate}
\end{theorem}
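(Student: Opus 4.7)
The strategy is classical: for each of (i) and (ii), treat sufficiency and necessity separately, using the pointwise bridge from Lemma \ref{relationship between two commutators} in one direction and the Bastero--Milman--Ruiz lemma \ref{commutator for positive quasilinear operators} in the other. I would run the arguments for $M^{\gamma+}$ and $M^{\gamma-}$ in parallel throughout, since only by combining information from both one-sided commutators can the full parabolic oscillation of $b$ (both past and future with respect to the time lag $\gamma$) be recovered.

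For sufficiency, Lemma \ref{relationship between two commutators} gives the pointwise domination
\begin{align*}
\left|[b,M^{\gamma\pm}](f)\right|\leq M_{b}^{\gamma\pm,1}(f)+2b_{-}M^{\gamma\pm}(f).
\end{align*}
In part (i), the assumption $b_{-}\in L^{\infty}$ together with the $L^{q}$-boundedness of $M^{\gamma\pm}$ provided by Lemma \ref{weighted inequality uncentered}(i) with $\omega\equiv 1$ handles the second term; the first term reduces to the $L^{q}$-boundedness of the sublinear maximal commutator $M_{b}^{\gamma\pm,1}$ for $b\in\mathrm{PBMO}$, which I would establish directly via the parabolic John--Nirenberg exponential integrability of PBMO functions. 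In part (ii), the hypothesis $b\geq 0$ forces $b_{-}=0$, so only the first term survives, and its boundedness from $L^{r}$ to $L^{q}$ under $\frac{1}{r}-\frac{1}{q}=\beta$ is exactly the implication (i) $\Longrightarrow$ (ii) of Theorem \ref{commutator theorem 2} applied with $\alpha=0$, $k=1$, and $\omega\equiv 1$.

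For necessity I would invoke Lemma \ref{commutator for positive quasilinear operators} after a carefully chosen test. Taking $f=\boldsymbol{1}_{R^{+}(\gamma)}$, one verifies that $M^{\gamma+}(\boldsymbol{1}_{R^{+}(\gamma)})\equiv 1$ on $R^{-}(\gamma)$, whence
\begin{align*}
[b,M^{\gamma+}](\boldsymbol{1}_{R^{+}(\gamma)})(x,t)=b(x,t)-M^{\gamma+}(b\,\boldsymbol{1}_{R^{+}(\gamma)})(x,t)\quad\text{on }R^{-}(\gamma).
\end{align*}
For part (i), feeding this into Lemma \ref{commutator for positive quasilinear operators} with the $L^{q}$-norm estimate yields simultaneously $b_{-}\in L^{\infty}$ and the mean oscillation bound $\fint_{R^{-}(\gamma)}|b-b_{R^{-}(\gamma)}|\lesssim\|[b,M^{\gamma+}]\|_{\mathscr{L}(L^{q})}$; the symmetric argument with $M^{\gamma-}$ yields the analogous bound on $R^{+}(\gamma)$, and together they give $b\in\mathrm{PBMO}$. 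For part (ii), the off-diagonal $L^{r}\to L^{q}$ estimate combined with $\|\boldsymbol{1}_{R^{+}(\gamma)}\|_{L^{r}}\sim|R|^{1/r}$ and $\frac{1}{r}-\frac{1}{q}=\beta$ produces, via the same testing,
\begin{align*}
\fint_{R^{-}(\gamma)}b_{-}^{q}\lesssim|R|^{q\beta}\longrightarrow 0\quad\text{as }|R|\to 0,
\end{align*}
so that $b\geq 0$ almost everywhere by Lebesgue differentiation, while Lemma \ref{commutator for positive quasilinear operators} now delivers the $\beta$-scaled oscillation bound characterizing $\mathrm{PC}^{\beta}$.

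The main obstacle is that $M^{\gamma\pm}$ is only sublinear, so the Cauchy integral technique used in Theorem \ref{parabolic extrapolation for commutator} is unavailable and one must proceed entirely through pointwise dominations and test-function estimates. Concretely, one must justify carefully that the commutators $[b,M^{\gamma+}]$ and $[b,M^{\gamma-}]$ \emph{jointly} control the parabolic mean oscillation, exploiting the past/future asymmetry of the parabolic geometry; a second delicate point is the sufficiency in (i), where the $L^{q}$-boundedness of $M_{b}^{\gamma+,1}$ for $b\in\mathrm{PBMO}$ falls outside the range $\beta\in(0,\frac{1}{n+p})$ covered by Theorem \ref{commutator theorem 2} and therefore requires an independent BMO argument based on the parabolic John--Nirenberg inequality.
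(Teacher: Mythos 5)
Your overall architecture is reasonable, but there are two genuine problems. First, for the direction ``$b\in\mathrm{PBMO}$ and $b_-\in L^\infty$ $\Longrightarrow$ $[b,M^{\gamma\pm}]$ bounded on $L^q$'' you reduce, via Lemma \ref{relationship between two commutators}, to the $L^q$-boundedness of the maximal commutator $M_{0,b}^{\gamma\pm,1}$ for $b\in\mathrm{PBMO}$, which you propose to prove ``directly via parabolic John--Nirenberg.'' That is exactly the step the paper cannot and does not take: the remark following Corollary \ref{[I_alpha,b] cor} records that whether $b\in\mathrm{PBMO}$ implies the (weighted) boundedness of $M_{0,b}^{\gamma+,k}$ is unknown, and the usual splitting $|b(x,t)-b(y,s)|\leq|b(x,t)-b_R|+|b_R-b(y,s)|$ does not close in the time-lag geometry, where the factor $|b(x,t)-b_R|$ is unbounded as $R$ ranges over all rectangles with $(x,t)\in R^-(\gamma)$. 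The paper avoids the maximal commutator entirely in this direction: since $M^{\gamma\pm}$ are positive quasilinear and bounded on $L^q(\omega)$ for every $\omega\in A_q$ (by Lemma \ref{weighted inequality uncentered}(i) and $A_q\subset A_q^+(\gamma)\cap A_q^-(\gamma)$), Lemma \ref{commutator for positive quasilinear operators} applies directly to $T=M^{\gamma\pm}$ and gives the $L^q$ bound with norm controlled by $\|b\|_{\mathrm{PBMO}}+\|b_-\|_{L^\infty}$. (Lemma \ref{relationship between two commutators} is only needed for the forward direction of (ii), where your reduction to (i) $\Longrightarrow$ (ii) of Theorem \ref{commutator theorem 2} is correct.)

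Second, in the converse directions you ``feed'' the test-function computation ``into Lemma \ref{commutator for positive quasilinear operators},'' but that lemma points the wrong way: it produces boundedness \emph{from} $b\in\mathrm{PBMO}$ and $b_-\in L^\infty$, and cannot output oscillation bounds or $b_-\in L^\infty$. Your test $f=\boldsymbol{1}_{R^+(\gamma)}$ is the right starting point, but the actual mechanism is missing: one must identify $M^{\gamma+}(bf)=M_R^{\gamma+}(b)$ on $R^-(\gamma)$ for the local maximal operator $M_R^{\gamma+}$, so the operator-norm hypothesis yields $\fint_{R^-(\gamma)}|b-M_R^{\gamma+}(b)|\lesssim\|[b,M^{\gamma+}]\|_{\mathscr{L}(L^r,L^q)}|R^-(\gamma)|^{\beta}$; then Lebesgue differentiation gives $|b|\leq M_R^{\gamma+}(b)$ a.e.\ on $R^-(\gamma)$, hence $0\leq b_-\leq M_R^{\gamma+}(b)-b$, which delivers $b_-\in L^\infty$ in (i) and $b_-=0$ in (ii); finally, the oscillation is recovered from the identity $\fint_{R^-(\gamma)}|b-b_{R^+(\gamma)}|=b_{R^-(\gamma)}-b_{R^+(\gamma)}+\frac{2}{|R^-(\gamma)|}\int_{R^-(\gamma)\cap\{b<b_{R^+(\gamma)}\}}[b_{R^+(\gamma)}-b]$, whose first term is controlled by $\|[b,M^{\gamma-}]\|$ via the symmetric test on $R^+(\gamma)$ and whose second term is controlled by $\fint_{R^-(\gamma)}|b-M_R^{\gamma+}(b)|$. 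Without this decomposition your assertion that the two one-sided commutators ``jointly control'' the parabolic oscillation is not substantiated.
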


Let $q\in(1,\infty)$. The \emph{Muckenhoupt class $A_q$} (based
on parabolic rectangles) is defined to be the set of all
weights $\omega$ such that
\begin{align*}
[\omega]_{A_q}:=\sup_{R\in\mathcal{R}_p^{n+1}}\fint_R\omega
\left(\fint_R\omega^{\frac{1}{1-q}}\right)^{q-1}<\infty.
\end{align*}
To prove Theorem \ref{commutator theorem 1},
we still need the following lemma for commutators of
positive quasilinear operators, which is precisely
\cite[Proposition 3]{bmr(pams-2000)}.

\begin{lemma}\label{commutator for positive quasilinear operators}
Let $\gamma\in(0,1)$, $q,r\in(1,\infty)$, $b\in
\mathrm{PBMO}$ with $b_-\in L^\infty$, and $T$ be a
positive quasilinear operator. If $T$ is bounded on
$L^q(\omega)$ for any $\omega\in A_r$, then $[b,T]$
is bounded on $L^q$ and the operator norm of $[b,T]$ is
less than a constant multiple of
$\|b\|_{\mathrm{PBMO}}+\|b_-\|_{L^\infty}$.
\end{lemma}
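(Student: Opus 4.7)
The plan is to prove (i) and (ii) in parallel, as they share the same structure of sufficiency via pointwise domination and necessity via testing against indicators, differing only in the Lebesgue exponents. For the sufficiency in (i), I invoke Lemma \ref{commutator for positive quasilinear operators} applied to $T = M^{\gamma+}$ and $T = M^{\gamma-}$; its hypothesis reduces to the weighted boundedness of $M^{\gamma\pm}$ on $L^q(\omega)$ for $\omega$ in some class $A_r$, which follows from Lemma \ref{weighted inequality uncentered}(i) combined with the containment $A_q \subset A_q^+(\gamma)$. The latter holds because $|R^-(\gamma)|, |R^+(\gamma)| \sim |R|$, so time-lagged averages of $\omega$ and $\omega^{-1/(q-1)}$ are comparable to averages over the full rectangle $R$, yielding $[\omega]_{A_q^+(\gamma)} \lesssim [\omega]_{A_q}$. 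For sufficiency in (ii), since $b \geq 0$ forces $b_- \equiv 0$, Lemma \ref{relationship between two commutators} degenerates to the pointwise bound $|[b,M^{\gamma+}](f)| \leq M_{0,b}^{\gamma+,1}(f)$, and the $L^r\to L^q$ boundedness of $M_{0,b}^{\gamma+,1}$ under $\frac{1}{r}-\frac{1}{q}=\beta$ is exactly the specialization of (i)$\Longrightarrow$(ii) of Theorem \ref{commutator theorem 2} to the unweighted case, already proven in Subsection \ref{subsection3.1}.

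For necessity, the starting point is the elementary identity, valid for every $x \in R^-(\gamma)$,
\begin{align*}
\left[b,M^{\gamma+}\right]\!\left(\boldsymbol{1}_{R^+(\gamma)}\right)(x) = b(x) - M^{\gamma+}\!\left(|b|\boldsymbol{1}_{R^+(\gamma)}\right)(x),
\end{align*}
which comes from the tight value $M^{\gamma+}(\boldsymbol{1}_{R^+(\gamma)})(x) = 1$ on $R^-(\gamma)$ (indicator averages are at most one and equal one for the choice $R' = R$). Because the second term is nonnegative, this immediately gives the pointwise bound $b_-(x) \leq |[b,M^{\gamma+}](\boldsymbol{1}_{R^+(\gamma)})(x)|$ on $R^-(\gamma)$. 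Taking $L^q$ norms and applying the assumed operator bound,
\begin{align*}
\left(\fint_{R^-(\gamma)}b_-^q\right)^{1/q} \lesssim |R|^{\frac{1}{r}-\frac{1}{q}},
\end{align*}
which is a uniform constant in case (i) (where $r = q$) and $C|R|^\beta \to 0$ as $|R| \to 0$ in case (ii). Lebesgue differentiation then yields $b_- \in L^\infty$ in case (i) and $b_- \equiv 0$ (i.e.\ $b \geq 0$) in case (ii).

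To conclude $b \in \mathrm{PBMO}$ or $b \in \mathrm{PC}^\beta$, I use the refined lower bound $M^{\gamma+}(|b|\boldsymbol{1}_{R^+(\gamma)})(x) \geq |b|_{R^+(\gamma)} = b_{R^+(\gamma)} + 2(b_-)_{R^+(\gamma)}$, which upgrades the identity to
\begin{align*}
\left(b(x) - b_{R^+(\gamma)}\right)_- \leq \left|\left[b,M^{\gamma+}\right]\!\left(\boldsymbol{1}_{R^+(\gamma)}\right)(x)\right| + 2\|b_-\|_{L^\infty}\quad\text{on }R^-(\gamma),
\end{align*}
together with its $M^{\gamma-}$ analogue for $(b - b_{R^-(\gamma)})_-$ on $R^+(\gamma)$. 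Averaging produces a bound of $C$ in case (i) and $C|R|^\beta$ in case (ii). The decisive algebraic step is the cancellation arising from $|a| = a + 2a_-$ and $|R^-(\gamma)| = |R^+(\gamma)|$,
\begin{align*}
\fint_{R^-(\gamma)}\!\left|b - b_{R^+(\gamma)}\right| + \fint_{R^+(\gamma)}\!\left|b - b_{R^-(\gamma)}\right| = 2\!\left[\fint_{R^-(\gamma)}\!\left(b - b_{R^+(\gamma)}\right)_- + \fint_{R^+(\gamma)}\!\left(b - b_{R^-(\gamma)}\right)_-\right],
\end{align*}
which eliminates the cross terms $\pm(b_{R^-(\gamma)} - b_{R^+(\gamma)})$ that would otherwise be uncontrolled. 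Finally, combining this with the elementary fact $\fint_E|b - b_E| \leq 2\fint_E|b - c|$ for any constant $c$ yields the desired uniform control of $\fint_{R^-(\gamma)}|b - b_{R^-(\gamma)}|$. The delicate point I anticipate is the bookkeeping of the $2\|b_-\|_{L^\infty}$ error term in case (i), which persists at every step; but since a PBMO estimate requires only a uniform bound (not decay as $|R| \to 0$), this additive constant is harmless, while in case (ii) the hypothesis $b \geq 0$ removes the error outright.
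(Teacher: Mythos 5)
Your proposal does not actually prove the statement it was supposed to prove. The assigned statement is Lemma \ref{commutator for positive quasilinear operators}: a boundedness result for the commutator $[b,T]$ of an \emph{arbitrary} positive quasilinear operator $T$ (assumed bounded on $L^q(\omega)$ for every $\omega\in A_r$) with a symbol $b\in\mathrm{PBMO}$ satisfying $b_-\in L^\infty$, with operator norm controlled by $\|b\|_{\mathrm{PBMO}}+\|b_-\|_{L^\infty}$. What you have written instead is a proof of Theorem \ref{commutator theorem 1} (the characterization of $\mathrm{PBMO}$ and $\mathrm{PC}^\beta$ via $[b,M^{\gamma\pm}]$), and your very first step for the sufficiency of its part (i) is to \emph{invoke} Lemma \ref{commutator for positive quasilinear operators} as a black box. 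With respect to the assigned statement the argument is therefore circular: nothing in your text explains how the hypotheses on a general $T$ (positivity, quasilinearity, uniform $A_r$-weighted $L^q$ boundedness) are converted into an unweighted $L^q$ bound for $[b,T]$ with the stated constant. The testing-against-indicators and averaging manipulations you develop are necessity arguments for the specific operators $M^{\gamma\pm}$ and have no bearing on the lemma.

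For the record, the paper itself supplies no proof of this lemma either; it is imported verbatim as \cite[Proposition 3]{bmr(pams-2000)}. A genuine proof would have to address the fact that $T$ is not linear, so the Cauchy-integral conjugation trick of Theorem \ref{commutator boundedness} does not apply directly. The standard route (Bastero--Milman--Ruiz, building on Milman--Schonbek) first reduces $[b,T]$ to $[|b|,T]$ at the cost of an error term of the form $2b_-T(|f|)+T(2b_-|f|)$, which is controlled by $\|b_-\|_{L^\infty}$ times the unweighted $L^q$ norm of $T$; it then bounds $[|b|,T]$ by a derivative-of-weighted-norms argument along the family of weights $e^{s|b|}$, using the John--Nirenberg inequality to guarantee $e^{sq|b|}\in A_r$ with uniformly bounded constant for $|s|\lesssim\|b\|_{\mathrm{PBMO}}^{-1}$, together with the positivity and quasilinearity of $T$ to make the differentiation legitimate. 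None of these ingredients appear in your proposal, so as a proof of Lemma \ref{commutator for positive quasilinear operators} it is not merely incomplete but addresses a different claim.
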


We are ready to prove Theorem \ref{commutator theorem 1}.

\begin{proof}[Proof of Theorem \ref{commutator theorem 1}]
We first show the necessity of (i). Fix $q\in(1,\infty)$.
Notice that both $M^{\gamma+}$ and $M^{\gamma-}$ are positive
quasilinear operator on $L_{\mathrm{loc}}^1$ and, using Lemma
\ref{weighted inequality uncentered}(i) and the obvious fact that
$A_q\subset A_q^+(\gamma)\cap A_q^-(\gamma)$, we conclude that
both $M^{\gamma+}$ and $M^{\gamma-}$ are bounded on
$L^q(\omega)$ for any $\omega\in A_q$. From this and Lemma
\ref{commutator for positive quasilinear operators},
we infer that both $[b,M^{\gamma+}]$ and $[b,M^{\gamma-}]$
are bounded on $L^q$, which completes the
proof of the necessity.

Next, we prove the sufficiency of (i). To do this, we first
show that $b_-\in L^\infty$. For any given
$R_0\in\mathcal{R}_p^{n+1}$, define the \emph{uncentered
parabolic maximal operators $M^{\gamma+}_{R_0}$
and $M^{\gamma-}_{R_0}$ related to $R_0$ with time lag},
respectively, by setting, for any
$f\in L_{\mathrm{loc}}^1$ and $(x,t)\in R_0$,
\begin{align*}
M^{\gamma+}_{R_0}(f)(x,t):=\sup_{\genfrac{}{}{0pt}{}{R\in
\mathcal{R}_{p}^{n+1},\,R\subset R_0}{(x,t)\in
R^-(\gamma)}}\fint_{R^+(\gamma)}|f|
\end{align*}
and
\begin{align*}
M^{\gamma-}_{R_0}(f)(x,t):=\sup_{\genfrac{}{}{0pt}{}{R\in
\mathcal{R}_{p}^{n+1},\,R\subset R_0}{(x,t)\in
R^+(\gamma)}}\fint_{R^-(\gamma)}|f|.
\end{align*}
For any given $R\in\mathcal{R}_p^{n+1}$,
let $f:=\boldsymbol{1}_{R^+(\gamma)}$. Then $f\in L^q$. By a
simple computation, we obtain $M^{\gamma+}(f)(x,t)=1$ and
$M^{\gamma+}(bf)(x,t)=M^{\gamma+}_R(b)(x,t)$ for any $(x,t)\in
R^-(\gamma)$. This, together with that $[b,M^{\gamma+}]$ is
bounded on $L^q$, further implies that
\begin{align*}
\left[\int_{R^-(\gamma)}\left|b-M^{\gamma+}_R(b)\right|^q
\right]^{\frac{1}{q}}&=\left[\int_{R^-(\gamma)}\left|b
M^{\gamma+}(f)-M^{\gamma+}(bf)\right|^q\right]^{\frac{1}{q}}\\
&\leq\left\|\left[b,M^{\gamma+}\right](f)\right\|
_{L^q}\leq\left\|\left[b,M^{\gamma+}\right]\right\|
_{\mathscr{L}(L^q)}\|f\|_{L^q}\\
&=\left\|\left[b,M^{\gamma+}\right]\right\|
_{\mathscr{L}(L^q)}\left|R^-(\gamma)\right|^{\frac{1}{q}}.
\end{align*}
From this and H\"older's inequality, it follows that
\begin{align}\label{20240624.1516}
\fint_{R^-(\gamma)}\left|b-M^{\gamma+}_R(b)\right|\leq
\left[\fint_{R^-(\gamma)}\left|b-M^{\gamma+}_R(b)\right|^q
\right]^{\frac{1}{q}}\leq
\left\|\left[b,M^{\gamma+}\right]\right\|_{\mathscr{L}(L^q)}.
\end{align}
In addition, applying Lebesgue's differentiation
theorem (see \cite[Lemma 2.3]{kmy(ma-2023)}), we conclude that,
for any given $R\in\mathcal{R}_p^{n+1}$, $|b|\leq
M^{\gamma+}_R(b)$ almost everywhere in $R^-(\gamma)$. Thus,
\begin{align}\label{20241122.1349}
0\leq b_-\leq M^{\gamma+}_R(b)-b_++b_-=M^{\gamma+}_R(b)-b
\end{align}
almost everywhere in $R^-(\gamma)$. From this and
\eqref{20240624.1516}, we infer that $(b)_{R^-(\gamma)}
\leq\|[b,M^{\gamma+}]\|_{\mathscr{L}(L^q)}$, which, together
with Lebesgue's differentiation theorem and the arbitrariness of
$R$, further implies $b_-\in L^\infty$.

Now, we prove $b\in\mathrm{PBMO}$.
Similar to \eqref{20240624.1516}, we can
show that, for any $R\in\mathcal{R}_p^{n+1}$,
\begin{align*}
\fint_{R^+(\gamma)} \left|b-M^{\gamma-}_R(b)\right|\leq
\left[\fint_{R^+(\gamma)} \left|b-M^{\gamma-}_R(b)\right|^q
\right]^{\frac{1}{q}}\leq
\left\|\left[b,M^{\gamma-}\right]\right\|_{\mathscr{L}(L^q)},
\end{align*}
which, together with the fact that $|b|\leq M^{\gamma-}_R(b)$
almost everywhere in $R^+(\gamma)$, further implies that
\begin{align*}
b_{R^-(\gamma)}\leq|b|_{R^-(\gamma)}\leq
\fint_{R^+(\gamma)}M^{\gamma+}_R(b)\leq b_{R^+(\gamma)}+
\left\|\left[b,M^{\gamma-}\right]\right\|_{\mathscr{L}(L^q)}.
\end{align*}
Combining this and \eqref{20240624.1516}, we find that,
for any given $R\in\mathcal{R}_p^{n+1}$,
\begin{align}\label{20241122.1357}
\fint_{R^-(\gamma)}\left|b-b_{R^+(\gamma)}\right|&=
\frac{1}{|R^-(\gamma)|}\int_{R^-(\gamma)}
\left[b-b_{R^+(\gamma)}\right]\\
&\quad+\frac{1}{|R^-(\gamma)|}\int_{R^-(\gamma)\cap\{b<
b_{R^+(\gamma)}\}}\left[b_{R^+(\gamma)}-b\right]\notag\\
&\quad-\frac{1}{|R^-(\gamma)|}\int_{R^-(\gamma)\cap
\{b<b_{R^+(\gamma)}\}}\left[b-b_{R^+(\gamma)}\right]\notag\\
&=b_{R^-(\gamma)}-b_{R^+(\gamma)}+\frac{2}{|R^-(\gamma)|}
\int_{R^-(\gamma)\cap\{b<b_{R^+(\gamma)}\}}
\left[b_{R^+(\gamma)}-b\right]\notag\\
&\leq\left\|\left[b,M^{\gamma-}\right]\right\|_{\mathscr{L}(L^q)}
+2\fint_{R^-(\gamma)}\left|b-M^{\gamma+}_R(b)\right|\notag\\
&\lesssim\left\|\left[b,M^{\gamma+}\right]\right\|
_{\mathscr{L}(L^q)}+\left\|\left[b,M^{\gamma-}\right]\right\|
_{\mathscr{L}(L^q)},\notag
\end{align}
which further implies that $b\in\mathrm{PBMO}$, which
completes the proof of the sufficiency of (i).

Next, we prove (ii). From Lemma
\ref{relationship between two commutators} and (i)\
$\Longrightarrow$\ (ii) of Theorem \ref{commutator theorem 2}, we
deduce that the necessity holds. Then we show the sufficiency.
Let $R\in\mathcal{R}_p^{n+1}$. Similar to \eqref{20240624.1516},
we can prove that
\begin{align*}
\fint_{R^-(\gamma)}\left|b-M^{\gamma+}_R(b)\right|\leq
\left[\fint_{R^-(\gamma)}\left|b-M^{\gamma+}_R(b)\right|^q
\right]^{\frac{1}{q}}\leq
\left\|\left[b,M^{\gamma+}\right]\right\|
_{\mathscr{L}(L^q)}\left|R^-(\gamma)\right|^\beta.
\end{align*}
By this and \eqref{20241122.1349}, we obtain
\begin{align*}
\fint_{R^-(\gamma)}b_-\leq
\fint_{R^-(\gamma)}\left|b-M^{\gamma+}_R(b)\right|\leq
\left\|\left[b,M^{\gamma+}\right]\right\|
_{\mathscr{L}(L^q)}\left|R^-(\gamma)\right|^\beta,
\end{align*}
which, together with Lebesgue's differentiation theorem,
further implies that $b_-=0$ almost everywhere on
$\mathbb{R}^{n+1}$ and hence $b$ is nonnegative. In addition,
By a slight modification of the estimation of
\eqref{20241122.1357}, we find that
\begin{align*}
\frac{1}{|R^-(\gamma)|^\beta}\fint_{R^-(\gamma)}
\left|b-b_{R^+(\gamma)}\right|\lesssim
\left\|\left[b,M^{\gamma+}\right]\right\|_{\mathscr{L}(L^q)}+
\left\|\left[b,M^{\gamma-}\right]\right\|_{\mathscr{L}(L^q)},
\end{align*}
and hence $b\in\mathrm{PC}^\beta$. This finishes the
proof of the sufficiency of (ii) and hence Theorem
\ref{commutator theorem 1}.
\end{proof}

\subsection{Commutators of Parabolic Fractional Integral Operators with Time Lag}
\label{subsection3.3}

The boundedness of the commutator of the classical fractional
integral operators was first studied by Chanillo
\cite{c(iumj-1982)}. We refer to \cite{bmmst(ma-2020),
cm(pm-2012), st(pm-1991)} for more investigations into it and to
\cite{bl(ieot-2008), fhll(jga-2023), lr(prse-2005),
lr(mia-2007)} for more studies on the commutator of the
one-sided fractional integral operators. In the parabolic
setting, the \emph{parabolic fractional integral operator
$I_\alpha^{\gamma+}$ with time lag},
originally introduced in \cite[Definition 5.1]{kyyz-2024-2},
is defined by setting, for
any $f\in L_{\mathrm{loc}}^1$ and $(x,t)\in\mathbb{R}^{n+1}$,
\begin{align}\label{20250304.2134}
I_\alpha^{\gamma+}(f)(x,t):=\int_{\bigcup_{L\in(0,\infty)}R
(x,t,L)^+(\gamma)}\frac{f(y,s)}
{[d_p((x,t),(y,s))]^{(n+p)(1-\alpha)}}\,dy\,ds,
\end{align}
where $\gamma,\alpha\in(0,1)$,
and $I_\alpha^{\gamma-}$ can be similarly defined.
It is known that the parabolic weighted boundedness of the
parabolic fractional integral operator with time lag can
characterize $A_{r,q}^+(\gamma)$ weights; see
\cite[Corollary 5.6 and Theorem 5.7]{kyyz-2024-2}.

\begin{lemma}\label{weighted inequality fractional integral}
Let $\gamma,\alpha\in(0,1)$ and $\omega$ be a weight.
Then the following assertions hold.
\begin{enumerate}
\item[\rm(i)] Let $1<r<q<\infty$ with
$\frac{1}{r}-\frac{1}{q}=\alpha$. Then $\omega\in
A_{r,q}^+(\gamma)$ if and only if $I^{\gamma+}_\alpha$ is
bounded from $L^r(\omega^r)$ to
$L^q(\omega^q)$.

\item[\rm(ii)] Let $1\leq r<q<\infty$
with $\frac{1}{r}-\frac{1}{q}=\alpha$. Then $\omega\in
A_{r,q}^+(\gamma)$ if and only if $I^{\gamma+}_\alpha$ is
bounded from $L^r(\omega^r)$ to $L^{q,\infty}(\omega^q)$.
\end{enumerate}
\end{lemma}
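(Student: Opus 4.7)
The plan is to prove each direction (necessity and sufficiency) in both (i) and (ii) by reducing to the already-established boundedness of the parabolic fractional maximal operators from Lemma \ref{weighted inequality uncentered}, together with a Hedberg-type pointwise decomposition adapted to parabolic rectangles with time lag.

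For necessity, I would test $I_\alpha^{\gamma+}$ against characteristic functions supported on $R^+(\gamma)$. The key geometric observation is that for any $R=R(x_0,t_0,L)\in\mathcal{R}_p^{n+1}$, any $(x,t)\in R^-(\gamma)$, and any $(y,s)\in R^+(\gamma)$, one has $d_p((x,t),(y,s))\lesssim L$ with implicit constant depending only on $n$, $p$, and $\gamma$. Consequently, for any nonnegative $f$,
\begin{align*}
I_\alpha^{\gamma+}\left(f\boldsymbol{1}_{R^+(\gamma)}\right)(x,t)\gtrsim L^{(n+p)(\alpha-1)}\int_{R^+(\gamma)}f\sim\left|R^+(\gamma)\right|^{\alpha}\fint_{R^+(\gamma)}f.
\end{align*}
Applied with $f=\omega^{-r'}\boldsymbol{1}_{R^+(\gamma)}$ (resp. $f=\boldsymbol{1}_{R^+(\gamma)}$ for the case $r=1$) and combined with the strong-type (i) inequality, this produces the $A_{r,q}^+(\gamma)$ estimate after a brief computation with powers. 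For the weak-type (ii), choose $\lambda$ just below the pointwise lower bound just derived, so that $R^-(\gamma)\subset\{I_\alpha^{\gamma+}(f)>\lambda\}$, and apply the weak-type bound to $\omega^q(R^-(\gamma))$; rearranging yields the same Muckenhoupt condition.

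For sufficiency I would use a Hedberg-type pointwise control. Fix $(x,t)\in\mathbb{R}^{n+1}$ and $\delta\in(0,\infty)$. Dyadically decompose the integration domain of $I_\alpha^{\gamma+}(f)(x,t)$ into the near piece $\bigcup_{L\leq \delta}R(x,t,L)^+(\gamma)$ and its complement. The near piece is bounded by a telescoping dyadic sum controlled by $C\delta^{(n+p)\alpha}M^{\gamma+}(f)(x,t)$. For the far piece, each dyadic annulus of scale $2^k\delta$ contributes a term comparable to $M^{\gamma+}_\alpha(f)(x,t)$; to prevent divergence of the geometric series, bound the tail using H\"older's inequality together with the $A_{r,q}^+(\gamma)$ condition, producing a factor involving $\|f\|_{L^r(\omega^r)}$. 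Balancing the two pieces by optimizing $\delta$ gives a pointwise inequality of the form
\begin{align*}
I_\alpha^{\gamma+}(f)(x,t)\lesssim\left[M^{\gamma+}(f)(x,t)\right]^{r/q}\|f\|_{L^r(\omega^r)}^{1-r/q}\omega(x,t)^{-(1-r/q)}
\end{align*}
(or a weak analogue at $r=1$), and then Lemma \ref{weighted inequality uncentered} closes the argument. Alternatively, once one instance with $r_0>1$ is in hand, Theorem \ref{parabolic extrapolation} and Corollary \ref{parabolic extrapotion cor 1} immediately propagate the boundedness to all admissible pairs $(r,q)$ in (i) and to the weak type (ii), which is a much shorter route than running the Hedberg argument at every exponent.

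The main obstacle I anticipate is tracking the geometry of the time-lagged upper parts $R^+(\gamma)$ in the dyadic decomposition: unlike the classical Euclidean case, the integration region $\bigcup_{L\in(0,\infty)}R(x,t,L)^+(\gamma)$ is a forward parabolic cone-like set that is not symmetric in the time variable, so the dyadic annuli must be defined relative to this asymmetric geometry, and the near/far split has to be compatible with $R^+(\gamma)$ rather than with concentric parabolic rectangles around $(x,t)$. Once this geometric bookkeeping is handled carefully (for instance by reparametrizing via the scale parameter $L$ as in the definition of $M^{\gamma+}_\alpha$), the remaining estimates reduce to standard dyadic summation and H\"older inequality arguments.
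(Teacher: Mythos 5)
First, note that the paper does not prove Lemma \ref{weighted inequality fractional integral} at all: it is imported verbatim from \cite[Corollary 5.6 and Theorem 5.7]{kyyz-2024-2}. So you are supplying a from-scratch argument where the authors supply a citation; your overall strategy (testing functions for necessity, a Hedberg-type splitting for sufficiency, extrapolation to propagate exponents) is the right family of ideas and is presumably close to what the cited reference does, but two of your steps fail as written.

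For the necessity, your lower bound $I_\alpha^{\gamma+}(f\boldsymbol{1}_{R^+(\gamma)})(x,t)\gtrsim|R^+(\gamma)|^\alpha\fint_{R^+(\gamma)}f$ for \emph{every} $(x,t)\in R^-(\gamma)$ is false, because $R^+(\gamma)$ is in general not contained in the integration domain $\bigcup_{L}R(x,t,L)^+(\gamma)=\{(y,s):\ s-t>\gamma\|y-x\|_\infty^p\}$. Indeed, for $(x,t)\in R^-(\gamma)$ and $(y,s)\in R^+(\gamma)$ one only gets $s-t>2\gamma L^p$ while $\|y-x\|_\infty$ can be close to $2L$, so $\gamma\|y-x\|_\infty^p$ can be close to $2^p\gamma L^p>2\gamma L^p$ since $p>1$; such $(y,s)$ are invisible from $(x,t)$ and contribute nothing to the integral. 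One must restrict $(x,t)$ to a spatially shrunken sub-box of $R^-(\gamma)$ (edge length $(2^{1/p}-1)L$ about $x_0$ works) and then upgrade the resulting ``narrow'' testing condition to the full $A_{r,q}^+(\gamma)$ condition, which requires the time-lag transfer/covering machinery of \cite[Theorem 3.1]{km(am-2024)}; this is a genuine missing ingredient, not mere bookkeeping. For the sufficiency, the claimed pointwise bound $I_\alpha^{\gamma+}(f)(x,t)\lesssim[M^{\gamma+}(f)(x,t)]^{r/q}\|f\|_{L^r(\omega^r)}^{1-r/q}[\omega(x,t)]^{-(1-r/q)}$ does not come out of your far-piece estimate: H\"older plus the $A_{r,q}^+(\gamma)$ condition on the dyadic shell at scale $2^k$ yields a factor $(\int_{V_k}\omega^q)^{-1/q}$ with $V_k$ a lower part behind $(x,t)$, and after the exponents cancel the far sum is $\sum_k(\int_{V_k}\omega^q)^{-1/q}$, which has no geometric decay and cannot be converted into a pointwise power of $\omega(x,t)$. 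This is precisely where the substantive work lies (in the classical and one-sided settings one uses a good-$\lambda$ comparison of $I_\alpha$ with $M_\alpha$ for $A_\infty$ weights, or a stopping-time/weak-type argument, rather than a naive weighted Hedberg balance). Finally, your extrapolation shortcut cannot reach the $r=1$ endpoint of part (ii), since Corollary \ref{parabolic extrapotion cor 1} only concludes for $r>1$, and it of course gives neither the necessity direction nor the one base case you still must prove by hand.
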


Using Theorem \ref{parabolic extrapolation for commutator} and
integrating the Fourier series expansion argument in
\cite{j(am-1978)} and the parabolic geometry associated with the
integral domain in \eqref{20250304.2134}, we can
show Theorem \ref{[I_alpha,b]}.

\begin{proof}[Proof of Theorem \ref{[I_alpha,b]}]
From Theorem \ref{parabolic extrapolation for commutator} and
Lemma \ref{weighted inequality fractional integral}(i),
we deduce that, for any given $k\in\mathbb{N}$, $1<r<q<\infty$
satisfying $\frac{1}{r}-\frac{1}{q}=\alpha$, and $\omega\in
A_{r,q}^+(\gamma)$, and for any $f\in L^r(\omega^r)$,
\begin{align*}
\left\|\left[b,I_\alpha^{\gamma+}\right]_k(f)\right\|_{L^q(\omega^q)}
\lesssim\|b\|_{\mathrm{PBMO}}^k\|f\|_{L^r(\omega^r)}.
\end{align*}
This finishes the proof of (i)\ $\Longrightarrow$\ (ii).
The proof of (ii)\ $\Longrightarrow$\ (iii) is trivial. We
prove (iii) $\Longrightarrow$ (i). Indeed, let
$\delta:=\frac{1}{8(\sqrt{n}+1)}$
and $(z_0,\tau_0):=(1,\dots,1,-1-2\delta-2\gamma\delta
(2+\frac{1}{\delta})^\frac{1}{p})\in\mathbb{R}^{n+1}$.
Then, for any $(x,t)\in\mathbb{R}^{n+1}$ satisfying
$|x-z_0|+|t-\tau_0|^\frac{1}{p}\in[0,\frac{1}{2})$,
the function $(|x|+|t|^\frac{1}{p})^{(n+p)(1-\alpha)}$ can be
expressed as an absolutely convergent Fourier series, that is,
\begin{align}\label{20250109.1630}
\left(|x|+|t|^\frac{1}{p}\right)^{(n+p)(1-\alpha)}
=\sum_{m\in\mathbb{N}}a_me^{i\langle v_m,(x,t)\rangle},
\end{align}
where, for any $m\in\mathbb{N}$, $a_m\in\mathbb{C}$ and
$v_m\in\mathbb{R}^{n+1}$ and the symbol
$\langle\cdot,\cdot\rangle$ represents the standard inner product
on $\mathbb{R}^{n+1}$. For any
$R:=R(x_0,t_0,L_0)\in\mathcal{R}_p^{n+1}$ with
$(x_0,t_0)\in\mathbb{R}^{n+1}$ and $L_0\in(0,\infty)$, let
$y_0:=x_0-\frac{L_0z_0}{\delta}$,
$s_0:=t_0-\frac{L_0^p\tau_0}{\delta^p}$,
and $R':=R(y_0,s_0,L_0)$. Through some simple calculations, we
conclude that
\begin{enumerate}
\item[\rm(i)] for any $(x,t)\in R$ and $(y,s)\in R'$,
$|\frac{x-y}{L_0}-\frac{z_0}{\delta}|+
|\frac{t-s}{L_0^p}-\frac{\tau_0}{\delta^p}|^\frac{1}{p}
\leq2(\sqrt{n}+1)=\frac{1}{4\delta}$;

\item[(ii)] for any $(x,t)\in R$, $R'\subset
\bigcup_{L\in(0,\infty)}R(x,t,L)^+(\gamma)$.
\end{enumerate}
For any $(x,t)\in\mathbb{R}^{n+1}$, let
$S(x,t):=\mathrm{sgn}[b(x,t)-b_{R'}]$.
By this, (i), (ii), \eqref{20250109.1630}, and Fubini's
theorem, we find that
\begin{align}\label{20250109.1640}
\int_R|b-b_{R'}|&=\int_R(b-b_{R'})S\\
&=\frac{1}{|R'|}\int_R\int_{R'}\left[b(x,t)-b(y,s)\right]S(x,t)
\,dy\,ds\,dx\,dt\notag\\
&\sim|R|^{-\alpha}\int_{\mathbb{R}^{n+1}}
\int_{\bigcup_{L\in(0,\infty)}R(x,t,L)^+(\gamma)}
\frac{b(x,t)-b(y,s)}{[d_p((x,t),(y,s))]
^{(n+p)(1-\alpha)}}\notag\\
&\quad\times S(x,t)\left[\left|\frac{\delta(x-y)}{L_0}\right|
+\left|\frac{\delta^p(t-s)}{L_0^p}\right|^\frac{1}{p}\right]
^{(n+p)(1-\alpha)}\notag\\
&\quad\times\boldsymbol{1}_R(x,t)\boldsymbol{1}_{R'}(y,s)
\,dy\,ds\,dx\,dt\notag\\
&=|R|^{-\alpha}\sum_{m\in\mathbb{N}}a_m\int_{\mathbb{R}^{n+1}}
\int_{\bigcup_{L\in(0,\infty)}R(x,t,L)^+(\gamma)}
\frac{b(x,t)-b(y,s)}{[d_p((x,t),(y,s))]
^{(n+p)(1-\alpha)}}\notag\\
&\quad\times S(x,t)e^{i\langle v_m,(\frac{\delta(x-y)}{L_0},
\frac{\delta(t-s)}{L_0^p})\rangle}\boldsymbol{1}_R(x,t)
\boldsymbol{1}_{R'}(y,s)\,dy\,ds\,dx\,dt.\notag
\end{align}
For any $m\in\mathbb{N}$ and $(x,t)\in\mathbb{R}^{n+1}$, define
\begin{align*}
f_m(x,t):=e^{-i\langle v_m,(\frac{\delta x}{L_0},
\frac{\delta t}{L_0^p})\rangle}\boldsymbol{1}_{R'}(x,t)
\mbox{\ \ and\ \ }g_m(x,t):=e^{i\langle v_m,(\frac{\delta x}{L_0},
\frac{\delta t}{L_0^p})\rangle}\boldsymbol{1}_R(x,t).
\end{align*}
Then, for any $m\in\mathbb{N}$, $f_m\in L^r$ and
$\|f_m\|_{L^r}\leq |R'|^\frac{1}{r}$. From this,
\eqref{20250109.1640}, H\"older's inequality, the boundedness of
$[b,I_\alpha^{\gamma+}]$ from $L^r$ to $L^q$, and
$\sum_{m\in\mathbb{N}}|a_m|<\infty$, we infer that
\begin{align*}
\int_R\left|b-b_{R'}\right|&\lesssim
|R|^{-\alpha}\sum_{m\in\mathbb{N}}|a_m|\int_{\mathbb{R}^{n+1}}
\left|\left[b,I_\alpha^{\gamma+}\right](f_m)\right|\,|g_m|\\
&\leq|R|^{-\alpha}\sum_{m\in\mathbb{N}}|a_m|\int_R
\left|\left[b,I_\alpha^{\gamma+}\right](f_m)\right|\\
&\leq|R|^{-\alpha}\sum_{m\in\mathbb{N}}|a_m|\,|R|^\frac{1}{q'}
\left[\int_{\mathbb{R}^{n+1}}\left|
\left[b,I_\alpha^{\gamma+}\right](f_m)\right|^q\right]
^\frac{1}{q}\\
&\lesssim|R|^{-\alpha}|R|^\frac{1}{q'}
\sum_{m\in\mathbb{N}}|a_m|\,
\left\|\left[b,I_\alpha^{\gamma+}\right]\right\|
_{\mathscr{L}(L^r,L^q)}
\|f\|_{L^r}\lesssim
\left\|\left[b,I_\alpha^{\gamma+}\right]\right\|
_{\mathscr{L}(L^r,L^q)}|R|.
\end{align*}
Taking the supremum over all $R\in\mathcal{R}_p^{n+1}$,
we conclude that
\begin{align*}
\|b\|_{\mathrm{PBMO}}\sim\sup_{R\in\mathcal{R}_p^{n+1}}
\inf_{c\in\mathbb{R}}\fint_R|b-c|\leq
\sup_{R\in\mathcal{R}_p^{n+1}}\fint_R|b-b_{R'}|\lesssim
\left\|\left[b,I_\alpha^{\gamma+}\right]\right\|
_{\mathscr{L}(L^r,L^q)}
\end{align*}
and hence $b\in\mathrm{PBMO}$. This finishes the proof of
(iii)\ $\Longrightarrow$\ (i) and Theorem \ref{[I_alpha,b]}.
\end{proof}

The following corollary is an immediate consequence of
Theorem \ref{[I_alpha,b]}.

\begin{corollary}\label{[I_alpha,b] cor}
Let $\gamma,\alpha\in(0,1)$, $1<r<q<\infty$ satisfy
$\frac{1}{r}-\frac{1}{q}=\alpha$, and $b\in L_{\mathrm{loc}}^1$.
Then the following statements are mutually equivalent.
\begin{enumerate}
\item[\rm(i)] $b\in\mathrm{PBMO}$.

\item[\rm(ii)] For any even positive integer $k$ and any
$\omega\in A_{r,q}^+(\gamma)$, $M_{\alpha,b}^{\gamma+,k}$ is
bounded from $L^r(\omega^r)$ to $L^q(\omega^q)$.

\item[\rm(iii)] There exists $k_0\in\mathbb{N}$ such that
$M_{\alpha,b}^{\gamma+,k_0}$ is bounded from $L^r$ to $L^q$.
\end{enumerate}
\end{corollary}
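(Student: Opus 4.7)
The plan is to establish the chain (i) $\Longrightarrow$ (ii) $\Longrightarrow$ (iii) $\Longrightarrow$ (i). The implication (ii) $\Longrightarrow$ (iii) is immediate by specializing to $\omega\equiv 1$ and, say, $k_0=2$.

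For (i) $\Longrightarrow$ (ii), the strategy is to reduce the assertion to Theorem \ref{[I_alpha,b]} via a pointwise bound of the form
\begin{align*}
M_{\alpha, b}^{\gamma+, k}(f)(x,t) \lesssim \left[b, I_\alpha^{\gamma'+}\right]_k(|f|)(x,t),
\end{align*}
valid for every even $k\in\mathbb{N}$ and some $\gamma'\in(0,\gamma)$ depending only on $\gamma$ and $p$. To prove this, I would fix $R=R(x_0,t_0,L_0)$ with $(x,t)\in R^-(\gamma)$ and observe that any $(y,s)\in R^+(\gamma)$ satisfies $|y-x|_\infty\le 2L_0$, $s-t\in(2\gamma L_0^p,2L_0^p)$, and $d_p((x,t),(y,s))\le 2L_0$. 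Taking $\gamma'\in(0,\gamma/2^{p-1})$ then yields the inclusion $R^+(\gamma)\subset R(x,t,2L_0)^+(\gamma')\subset\bigcup_{L\in(0,\infty)}R(x,t,L)^+(\gamma')$; since $k$ is even, $(b(x,t)-b(y,s))^k=|b(x,t)-b(y,s)|^k\ge 0$, so for nonnegative $f$ a direct comparison using $|R^+(\gamma)|^\alpha\sim L_0^{(n+p)\alpha}$ and $d_p^{(n+p)(1-\alpha)}\le CL_0^{(n+p)(1-\alpha)}$ gives the bound. Taking the supremum over $R$, then applying Theorem \ref{[I_alpha,b]} at $\gamma'$ together with the time-lag independence of $A_{r,q}^+(\cdot)$ established in \cite[Theorem 3.1]{km(am-2024)} and used in Lemma \ref{improving weight exponent} (so that $\omega\in A_{r,q}^+(\gamma)=A_{r,q}^+(\gamma')$), one obtains the claimed weighted boundedness.

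For (iii) $\Longrightarrow$ (i), I would test $M_{\alpha, b}^{\gamma+, k_0}$ on $f:=\boldsymbol{1}_{R^+(\gamma)}$ for arbitrary $R\in\mathcal{R}_p^{n+1}$. Jensen's inequality yields $M_{\alpha, b}^{\gamma+, k_0}(f)(x,t)\ge|R^+(\gamma)|^\alpha|b(x,t)-b_{R^+(\gamma)}|^{k_0}$ on $R^-(\gamma)$; raising to the $q$-th power, integrating over $R^-(\gamma)$, and invoking the assumed $L^r\to L^q$ bound together with the identity $q/r-\alpha q=1$ produces
\begin{align*}
\fint_{R^-(\gamma)}\left|b-b_{R^+(\gamma)}\right|^{k_0 q}\lesssim\left\|M_{\alpha, b}^{\gamma+, k_0}\right\|^q_{\mathscr{L}(L^r,L^q)}.
\end{align*}
Jensen's (or Hölder's) inequality then gives a bound on $\fint_{R^-(\gamma)}|b-b_{R^+(\gamma)}|$, after which the triangle-inequality manipulation already used in \eqref{20241122.1357} upgrades this to $\fint_{R^-(\gamma)}|b-b_{R^-(\gamma)}|\lesssim\|M_{\alpha, b}^{\gamma+, k_0}\|^{1/k_0}_{\mathscr{L}(L^r,L^q)}$. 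Embedding any parabolic rectangle $R_0=R(x_0,t_0,L_0)$ into a set of the form $R_*^-(\gamma)$ with $L_*=(2/(1-\gamma))^{1/p}L_0$ and an appropriate time translation (so that $R_0\subset R_*^-(\gamma)$ with $|R_0|\sim|R_*^-(\gamma)|$) transfers this estimate to $\fint_{R_0}|b-b_{R_0}|$ and yields $b\in\mathrm{PBMO}$.

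The main obstacle will be the geometric verification in (i) $\Longrightarrow$ (ii): the inclusion $R^+(\gamma)\subset\bigcup_{L\in(0,\infty)}R(x,t,L)^+(\gamma')$ depends delicately on the parabolic scaling exponent $p$ and forces the loss $\gamma'<\gamma/2^{p-1}$, which is precisely the reason one must invoke the time-lag independence of the weight class to return to the hypothesis $\omega\in A_{r,q}^+(\gamma)$ at the end.
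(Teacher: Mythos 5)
Your proposal is correct and follows essentially the same route as the paper: the paper proves (i)\ $\Longrightarrow$\ (ii) by the pointwise domination $M_{\alpha,b}^{\gamma+,k}(f)\lesssim I_{\alpha,b}^{\gamma'+,k}(|f|)=[b,I_\alpha^{\gamma'+}]_k(|f|)$ for even $k$ (with modified lag $\gamma'=\frac{\gamma+1}{2^p}$ rather than your $\gamma'<\gamma/2^{p-1}$), combined with Theorem \ref{[I_alpha,b]} and the time-lag independence of $A_{r,q}^+(\cdot)$ from \cite[Theorem 3.1]{km(am-2024)}, and proves (iii)\ $\Longrightarrow$\ (i) by the testing argument of \eqref{20250305.2233} with $\beta=0$. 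Your write-up merely makes explicit two details the paper leaves implicit (the Jensen/H\"older step needed to pass from exponent $k_0$ back to exponent $1$, and the covering of an arbitrary parabolic rectangle by a set of the form $R_*^-(\gamma)$ of comparable measure), both of which are carried out correctly.
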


\begin{proof}
The proof of (i)\ $\Longrightarrow$\ (ii) follows immediately
from Theorem \ref{[I_alpha,b]}, \cite[Theorem 3.1]{km(am-2024)},
and the fact that, for any $f\in
L_{\mathrm{loc}}^1$,
\begin{align}\label{20250119.1120}
M_\alpha^{\gamma+}(f)\lesssim
I_\alpha^{\frac{\gamma+1}{2^p}+}(|f|).
\end{align}
The proof of (ii)\ $\Longrightarrow$\ (iii) is trivial and,
applying an argument similar to that used in the
proof of (iii)\ $\Longrightarrow$\ (i) in Theorem
\ref{commutator theorem 2} with $\beta=0$ therein, we obtain
(iii)\ $\Longrightarrow$\ (i), which completes the proof of
Corollary \ref{[I_alpha,b] cor}.
\end{proof}

\begin{remark}
\begin{enumerate}
\item[\rm(i)] It is unknown whether (i)\ $\Longrightarrow$\ (ii)
holds for any odd positive integer $k$ in Corollary
\ref{[I_alpha,b] cor} because, in this case, the estimate
$M_{\alpha,b}^{\gamma+,k}(f)\lesssim
I_{\alpha,b}^{\gamma+,k}(f)$ may not hold for any
$f\in L_{\mathrm{loc}}^1$ and hence we can not use the weighted
boundedness of $I_{\alpha,b}^{\gamma+,k}$ to deduce that of
$M_{\alpha,b}^{\gamma+,k}$.

\item[\rm(ii)] Let $\gamma\in(0,1)$, $k\in\mathbb{N}$,
$q\in(1,\infty)$, and $b\in L_{\mathrm{loc}}^1$. It can be shown
in the same way as in the proof of Corollary \ref{[I_alpha,b] cor}
that the boundedness of
$M_{0,b}^{\gamma+,k}$ on $L^q$ implies $b\in\mathrm{PBMO}$.
However, it is still unknown whether
$b\in\mathrm{PBMO}$ implies the weighted boundedness of
$M_{0,b}^{\gamma+,k}$.
\end{enumerate}
\end{remark}

We now introduce the following concept of maximal commutators of
parabolic fractional integral operators with time lag.
\begin{definition}\label{maximal commutators of fractional integral}
Let $\gamma,\alpha\in[0,1)$, $k\in\mathbb{N}$, and $b\in
L_{\mathrm{loc}}^1$. The \emph{$k$th order maximal commutators
$I_{\alpha,b}^{\gamma+,k}$ and $I_{\alpha,b}^{\gamma-,k}$ of
parabolic fractional integral operators with time lag} are
defined, respectively, by setting, for any $f\in
L_{\mathrm{loc}}^1$ and $(x,t)\in\mathbb{R}^{n+1}$,
\begin{align*}
I_{\alpha,b}^{\gamma+,k}(f)(x,t):=
\int_{\bigcup_{L\in(0,\infty)}R
(x,t,L)^+(\gamma)}\frac{|b(x,t)-b(y,s)|^k|f(y,s)|}
{[d_p((x,t),(y,s))]^{(n+p)(1-\alpha)}}\,dy\,ds
\end{align*}
and
\begin{align*}
I_{\alpha,b}^{\gamma-,k}(f)(x,t):=
\int_{\bigcup_{L\in(0,\infty)}R
(x,t,L)^-(\gamma)}\frac{|b(x,t)-b(y,s)|^k|f(y,s)|}
{[d_p((x,t),(y,s))]^{(n+p)(1-\alpha)}}\,dy\,ds.
\end{align*}
\end{definition}

Finally, we prove that (i) and (viii) through (xiii) of
Theorem \ref{commutator theorem 2} are mutually equivalent,
which completes the proof of Theorem \ref{commutator theorem 2}.

\begin{proof}[Proof of Mutual Equivalences of (i) and (viii)
through (xiii) of Theorem \ref{commutator theorem 2}]
We first show that (i)\ $\Longrightarrow$\ (viii), (x), and (xii).
Assume that (i) holds. By Lemma \ref{Campanato and Lip}, we find
that, for any $f\in L_{\mathrm{loc}}^1$,
\begin{align*}
\max\left\{\left|\left[b,I_\sigma^{\gamma+}\right]_k(f)\right|,\,
I_{\sigma,b}^{\gamma+,k}(f)\right\}\lesssim
\|b\|_{\mathrm{PC}^\beta}^kI^{\gamma+}_{\sigma+\beta k}(|f|),
\end{align*}
which, together with Lemma
\ref{weighted inequality fractional integral}, further implies
that (viii), (x), and (xii) hold.

The proofs of (viii)\ $\Longrightarrow$\ (ix), (x)\
$\Longrightarrow$\ (xi), and (xii) $\Longrightarrow$ (xii) are
trivial. Assume that (ix) holds. From an argument similar to
that used in the proof of (iii)\ $\Longrightarrow$\ (i) of
Theorem \ref{[I_alpha,b]}, we infer that
$\|b\|_{\mathrm{PC}^\beta}\lesssim
\|[b,I_\alpha^{\gamma+}]\|_{L^r\to L^q}$ and hence (i) holds.
Using \eqref{20250119.1120} and \cite[Theorem 3.1]{km(am-2024)},
we obtain (xi)\ $\Longrightarrow$\ (iii) and hence (xi)\
$\Longrightarrow$\ (i). From the same reason, it follows that
(xiii) $\Longrightarrow$ (v) and hence (xiii) $\Longrightarrow$
(i). In conclusion, we have completed the proof the mutual
equivalences of (i) and (viii) through (xiii) of Theorem
\ref{commutator theorem 2} and hence Theorem
\ref{commutator theorem 2}.
\end{proof}

\bigskip

\noindent Mingming Cao

\medskip

\noindent Instituto de Ciencias Matem\'aticas
CSIC-UAM-UC3M-UCM, Consejo Superior de
Investigacion-\\es Cient\'ificas, C/Nicol\'as
Cabrera, 13-15, 28049 Madrid, Spain

\smallskip

\noindent{\it E-mail:} \texttt{mingming.cao@icmat.es}

\bigskip

\noindent Weiyi Kong, Dachun
Yang and Wen Yuan (Corresponding author)

\medskip

\noindent Laboratory of Mathematics
and Complex Systems (Ministry of Education of China),
School of Mathematical Sciences,
Beijing Normal University,
Beijing 100875, The People's Republic of China

\smallskip

\noindent{\it E-mails:} \texttt{weiyikong@mail.bnu.edu.cn} (W. Kong)

\noindent\phantom{{\it E-mails:} }\texttt{dcyang@bnu.edu.cn} (D. Yang)

\noindent\phantom{{\it E-mails:} }\texttt{wenyuan@bnu.edu.cn} (W. Yuan)

\bigskip

\noindent Chenfeng Zhu

\medskip

\noindent School of Mathematical Sciences, Zhejiang
University of Technology, Hangzhou 310023, The People's Republic of China

\smallskip

\noindent{\it E-mail:} \texttt{chenfengzhu@zjut.edu.cn}

\end{document}